\newtheorem{theorem}{Theorem}[section]
\newtheorem{corollary}{Corollary}[theorem]
\newtheorem{lemma}[theorem]{Lemma}
\renewcommand{\vec}{\textbf}
\theoremstyle{definition}
\newtheorem{definition}[theorem]{Definition}
\theoremstyle{remark}
\newtheorem{remark}[theorem]{Remark}
\newcommand{\D}{\mathrm{d}}
\DeclareMathOperator{\E}{e}
\DeclareMathOperator{\I}{i}
\DeclareMathOperator{\tr}{tr}
\newcounter{alg}
\numberwithin{equation}{section}
\begin{document}

\title[The CGA is almost deterministic]{The conjugate gradient algorithm on well-conditioned Wishart matrices is almost deterministic}


\author{Percy Deift}
\address{New York University\\
Courant Institute of Mathematical Sciences\\
251 Mercer St.\\
New York, NY 10012}
\curraddr{}
\email{deift@cims.nyu.edu}
\thanks{}

\author{Thomas Trogdon}
\address{University of Washington\\
Department of Applied Mathematics\\
Seattle, WA 98195-3925}
\curraddr{}
\email{trogdon@uw.edu}
\thanks{We are grateful for discussions with Elliot Paquette, Joel Tropp and Roman Vershynin that have greatly improved the paper.  This work was supported in part by NSF    DMS-1300965 (PD) and NSF DMS-1753185, DMS-1945652 (TT)}
\newcommand{\noneed}[1]{{\color{red}{#1}}}
\newcommand{\addit}[1]{{\color{blue}{#1}}}
\subjclass[2010]{Primary: 65F10, 60B20}

\date{}

\dedicatory{}

\begin{abstract}
We prove that the number of iterations required to solve a random positive definite linear system with the conjugate gradient algorithm  is almost deterministic for large matrices.  We treat the case of Wishart matrices $W = XX^*$ where $X$ is $n \times m$ and $n/m \sim d$ for $0 < d < 1$.  Precisely, we prove that for most choices of error tolerance, as the matrix increases in size, the probability that the iteration count deviates from an explicit deterministic value tends to zero.  In addition, for a fixed iteration count, we show that the norm of the error vector and the norm of the residual converge exponentially fast in probability, converge in mean and converge almost surely.  
\end{abstract}

\maketitle

\section{Introduction}

The conjugate gradient algorithm (CGA) \cite{Hestenes1952} is arguably the most effective iterative method from numerical linear algebra.    In exact arithmetic, the algorithm requires at most $n$ iterations to solve a $n \times n$ positive-definite linear system and it often requires many less iterations to compute a good approximate solution.  It is exceedingly simple to implement and there are well-known error bounds available. And, despite the fact that the CGA is sensitive to round-off errors these error bounds still effectively hold for floating point arithmetic \cite{Greenbaum1989}.  While we present the algorithm in full below (see Algorithm~\ref{a:cga}), the variational characterization of the method is summarized as follows:  Consider the linear system $W \vec x = \vec b$, $W>0$. Given an initial guess $\vec x_0$, find the unique vector $\vec x_k$ that satisfies
\begin{align*}
    &\|\vec x - \vec x_k\|_W = \min_{\vec y \in \mathcal X_k} \|\vec y - \vec x\|_W,\\
&\mathcal X_k= \vec x_0 + \mathrm{span}\{\vec r_0, W \vec r_0, \ldots, W^{k-1}\vec r_0\},\quad \|\vec y \|_W^2 = \vec y^*W\vec y, \quad \vec r_0 = \vec b - W \vec x_0.
\end{align*}
At each step $k$ of the iteration one can easily construct $\vec x_k$ and the algorithm itself computes $\vec r_k = \vec b - W \vec x_k$, $k = 0,1,2,\ldots,n$.  One has to then determine a computable stopping criterion, and typically, the algorithm is halted when $\|\vec r_k\|_{2} < \epsilon$, $\|\vec y\|^2_{2} = \vec y^* \vec y$, for a chosen error tolerance $\epsilon.$

Here we focus on two main measures of the error,  $\vec e_k(W,\vec b) = \vec e_k : = \vec x - \vec x_k$:
\begin{align*}
    \| \vec e_k \|_W \quad \text{and}\quad  \|\vec r_k \|_{2} = \| \vec e_k \|_{W^2}, \quad \vec r_k(W,\vec b) = \vec r_k = \vec b - W \vec x_k,
\end{align*}
in the particular case when $\vec x_0 = 0$.  The associated halting times are
\begin{align}\label{eq:htimes}
\begin{split}
     t^{(1)}_{\epsilon}(W,\vec b) &= \min\{ k: \|\vec e_k\|_{W} < \epsilon \},\\
    t^{(2)}_{\epsilon}(W,\vec b) &= \min\{ k: \|\vec r_k\|_2 < \epsilon \}.
\end{split}
\end{align}
We emphasize the importance of analyzing both quantities because $\vec r_k$ is what is observed throughout the iteration and, of course, $\vec e_k$ is the true error.  

Our results (Theorems~\ref{t:estimates}, \ref{t:ek}, \ref{t:halting}) are derived for both real and complex Gaussian matrices \footnote{We can also easily extend the results to the case of quarternion entries.}.  We assume 
\begin{align}\label{eq:W}
W = XX^*/m,
\end{align}
where $X$ is an $n \times m$ matrix whose entries are iid real or complex standard normal random variables. This is the real or complex Wishart distribution. Suppose further that $m = \lfloor n/d \rfloor$ for $0 < d \leq 1$ (Note that if $d > 1$, i.e. $m < n$, then $W$ is singular and $W \vec x = \vec b$ does not have a unique solution).  Then if $\vec b$ is a random unit vector, independent of $W$, our results show that as $n \to \infty$
\begin{align*}
    \| \vec r_k (W,\vec b)\|_{2} \overset{\text{almost surely}}{\longrightarrow} d^{k/2}.
\end{align*}
If $ d < 1$ then as $n \to \infty$
\begin{align*}
 \| \vec e_k(W,\vec b) \|_{W} \overset{\text{almost surely}}{\longrightarrow} \frac{d^{k/2}}{\sqrt{1-d}}.
\end{align*}
Furthermore, there are discrete sets $S_d^{(1)}$ and $S_d^{(2)}$ with the property that if $\epsilon >0$ is in the complement of these sets, $\epsilon$ fixed, then 
\begin{align*}
 \lim_{n \to \infty} &\mathbb P \left(  t^{(1)}_{\epsilon}(W,\vec b) = \left\lceil \frac{2\log \epsilon + \log (1-d)}{\log d} \right\rceil \right) = 1, \quad \epsilon < (1-d)^{-1}, \quad x \not\in S_d^{(1)},\\
   \lim_{n \to \infty} &\mathbb P \left(  t^{(2)}_{\epsilon}(W,\vec b) = \left\lceil \frac{2\log \epsilon}{\log d} \right\rceil  \right) = 1, \quad \epsilon < 1, \quad x \not \in S_d^{(2)}.
\end{align*}
Therefore, the halting time becomes effectively deterministic.  We also present estimates that demonstrate that the probability that the errors $\vec e_k$ deviate from their means decays exponentially with respect $n$.  In the case $d = 1$, a consequence of our results is that for any fixed $k >0$ and $\epsilon < 1$,
\begin{align*}
    \lim_{n \to \infty} \mathbb P( t^{(2)}_{\epsilon}(W,\vec b) > k) = 1.
\end{align*}

\begin{remark}
It is important to point out that $W$ in \eqref{eq:W} is not necessarily a near-identity matrix.  Indeed as $n \to \infty$, the eigenvalues of $W$ typically lie in the interval
\begin{align*}
    \left[(1-\sqrt{d})^2,(1 + \sqrt{d})^2\right],
\end{align*}
and have an asymptotic density given by the famous Marchenko--Pastur law, see Definition~\ref{d:MP}.  For finite $n$, some of the eigenvalues of $W$ lie outside this interval, and the control of these eigenvalues plays a crucial role in the proofs of Theorems 3.1, 3.2 and 3.3 (see, for example, the proofs of \eqref{eq:largest} and \eqref{eq:smallest}).
\end{remark}

    Our proofs make critical use of the invariance of the Wishart distribution and the relation between Householder bidiagonalization and the Lanczos iteration.  This allows one to use classical estimates on chi-distributed random variables in a crucial way.  The specific tools and results we incorporate from  random matrix theory include global eigenvalue estimates \cite{Davidson2001}, the convergence of the empirical spectral measure \cite{Bai2007} and the central limit theorem for linear statistics \cite{Lytova2009}.   

The remainder of the paper is setup as follows.  In Section~\ref{sec:comp} we compare our analysis with facts already known about the conjugate gradient algorithm.  We also demonstrate our results with  numerical examples.  In Section~\ref{sec:bidiag} we introduce our random matrix ensembles, the basic definitions from random matrix theory and review the Householder bidiagonalization procedure applied to these ensembles.  We also review the connections between the conjugate gradient algorithm, the Lanczos iteration and the Householder bidiagonalization procedure.  In Section~\ref{sec:main} we present our main theorems.  In Section~\ref{sec:tech} we introduce the results from probability and random matrix theory that are required to prove our theorems.  In Section~\ref{sec:proof}  we give the proofs of the theorems.

\subsection{Comparison and demonstration}\label{sec:comp}

We now give a demonstration and discussion of the results.  In what follows $\langle \cdot \rangle$ denotes the sample average of a random variable using $20,000$ samples.  We will refer to the matrix
\begin{align*}
    W = XX^*/m
\end{align*}
where $X$ is an $n \times m$ matrix, having iid entries, $X_{11} = \pm 1$ with equal probability, as the \emph{Bernoulli ensemble} (BE).
\subsubsection{A numerical demonstration}

To demonstrate our main results,  in Figure~\ref{f:esamp} we plot the following quantities as a function of $k$ for different values of $n$
\begin{align*}
    \langle \|\vec e_k(W,\vec b)\|_W \rangle \quad \text{and} \quad \frac{d^{k/2}}{\sqrt{1-d}}
\end{align*}
with error bars that indicate where $99.9$\% of the samples lie.  In Figure~\ref{f:rsamp} we plot the same statistics for
\begin{align*}
    \langle \|\vec r_k(W,\vec b)\|_{2} \rangle \quad \text{compared with} \quad d^{k/2}.
\end{align*}
Both Figure~\ref{f:esamp} and Figure~\ref{f:rsamp} demonstrate the concentration of the errors about their means.
We demonstrate the limiting behavior of the halting times $t_\epsilon^{(j)}$ in Figure~\ref{f:halting}.  

In all of these figures we have included computations for distributions of random matrices, in particular the Bernoulli ensemble, that are beyond the class for which our results apply.  Nonetheless, it is clear that the behavior persists.  This universality will be investigated in future work.
\begin{figure}[tbp]
\begin{overpic}[width=.9\linewidth]{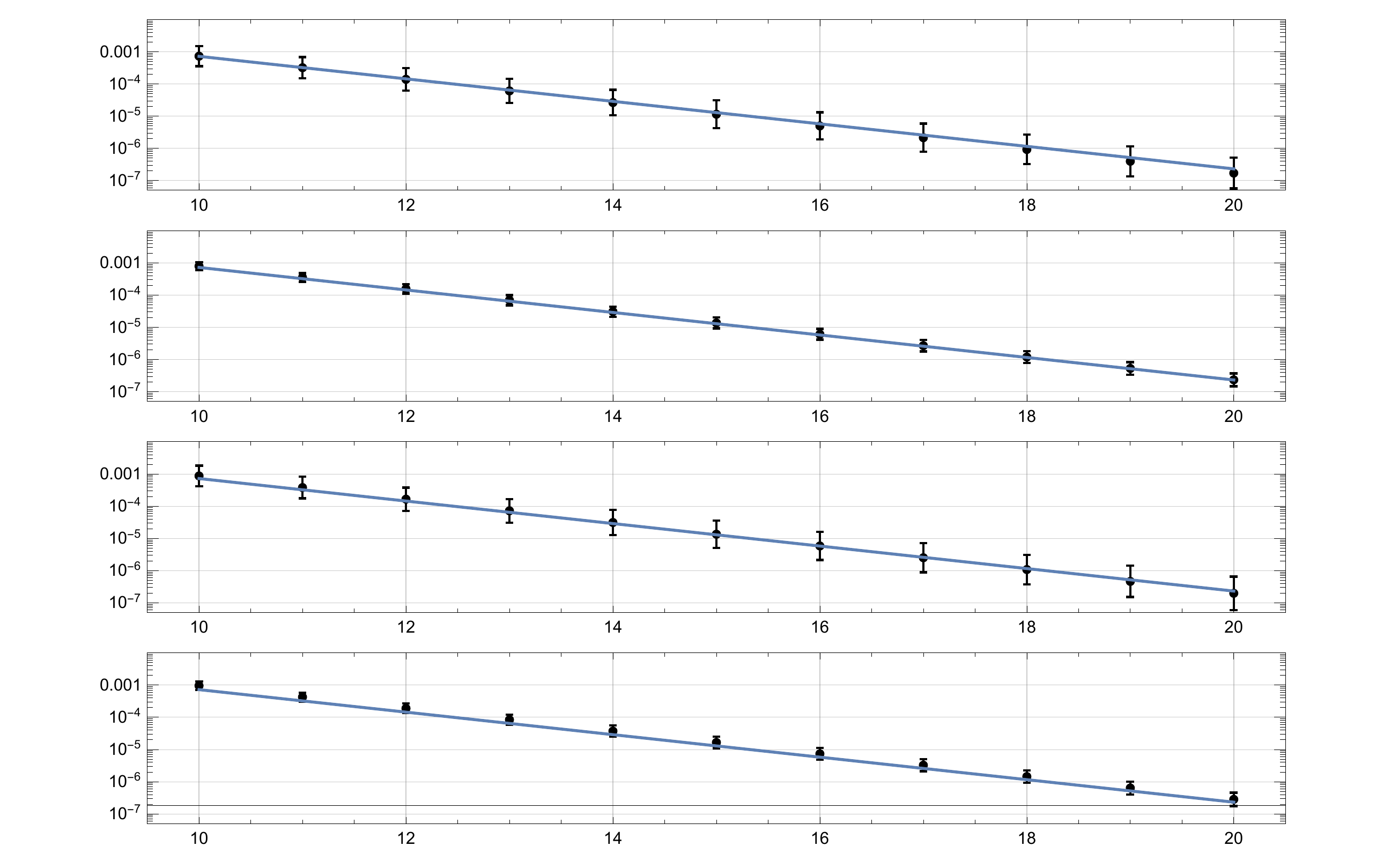}
\put(71,57){$\beta = 1$,~$n = 200$}
\put(71,42){$\beta = 1$,~$n = 800$}
\put(71,27){BE,~$n = 200$}
\put(71,12){BE,~$n = 800$}
\put(43,-4){$k$}
\put(2,26){\rotatebox{90}{$\|\vec e_k\|_{W}$}}
\end{overpic}
\vspace{.1in}
\caption{\label{f:esamp} A demonstration that $\|\vec e_k\|_{W}$ concentrates strongly around is mean, which is nearly equal to $d^{k/2}/\sqrt{1-d}$ (solid).  This plot is for $d = 0.2$.  The dots give the sample mean over $20,000$ samples and the error bars give the symmetric interval where $99.9\%$ of the samples lie.  It is clear that this interval shrinks rapidly as $n$ increases.  }
\end{figure}
\begin{figure}[tbp]
\begin{overpic}[width=.9\linewidth]{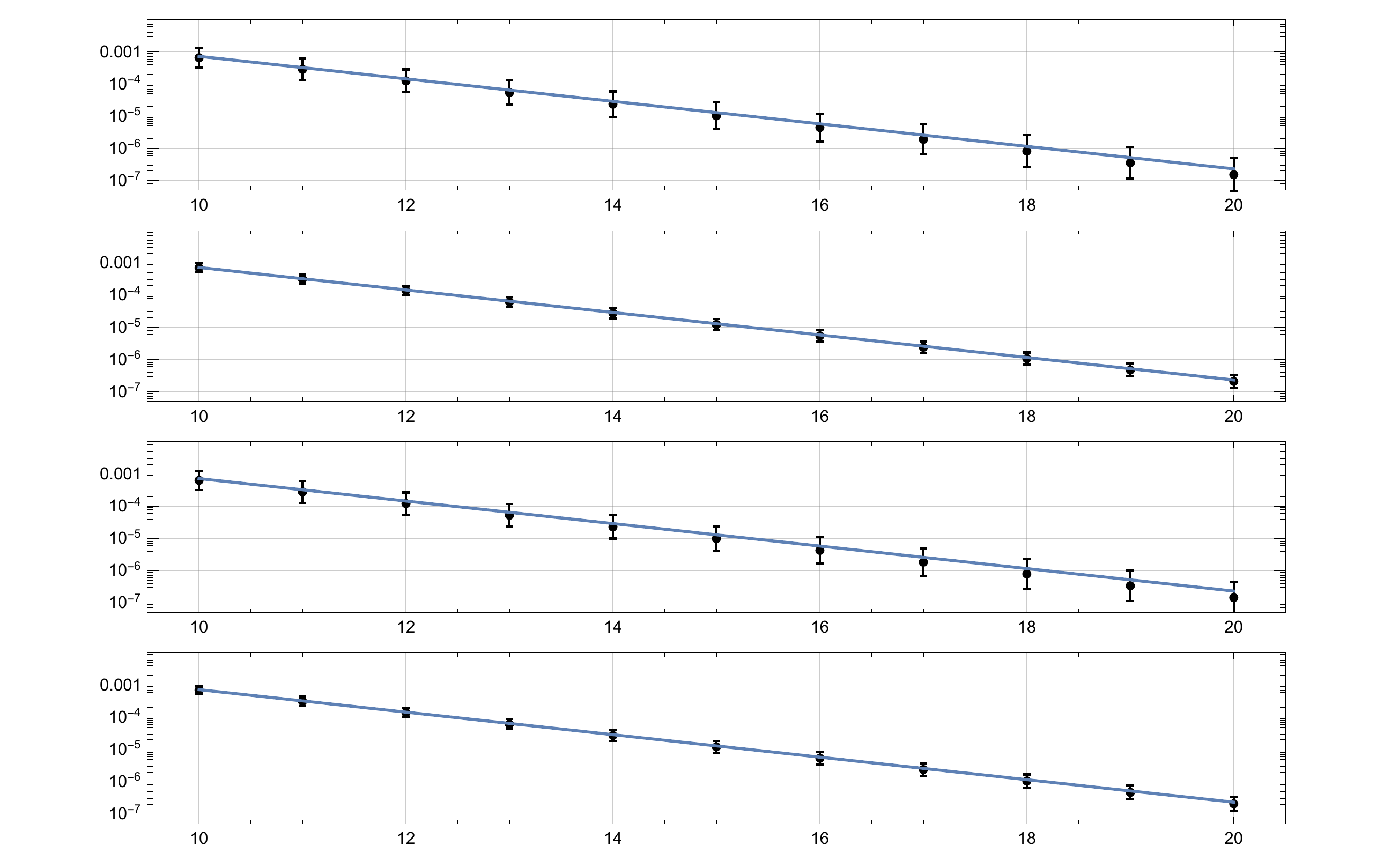}
\put(71,57){$\beta = 1$,~$n = 200$}
\put(71,42){$\beta = 1$,~$n = 800$}
\put(71,27){BE,~$n = 200$}
\put(71,12){BE,~$n = 800$}
\put(43,-4){$k$}
\put(2,26){\rotatebox{90}{$\|\vec r_k\|_{2}$}}
\end{overpic}
\vspace{.1in}
\caption{\label{f:rsamp} A demonstration that $\|\vec r_k\|_{2}$ concentrates strongly around is mean, which is nearly equal to $d^{k/2}$ (solid).  This plot is for $d = 0.2$.  The dots give the sample mean over $20,000$ samples and the error bars give the symmetric interval where $99.9\%$ of the samples lie.  It is clear that this interval shrinks rapidly as $n$ increases.  }
\end{figure}

\begin{figure}[tbp]
\begin{overpic}[width=\linewidth]{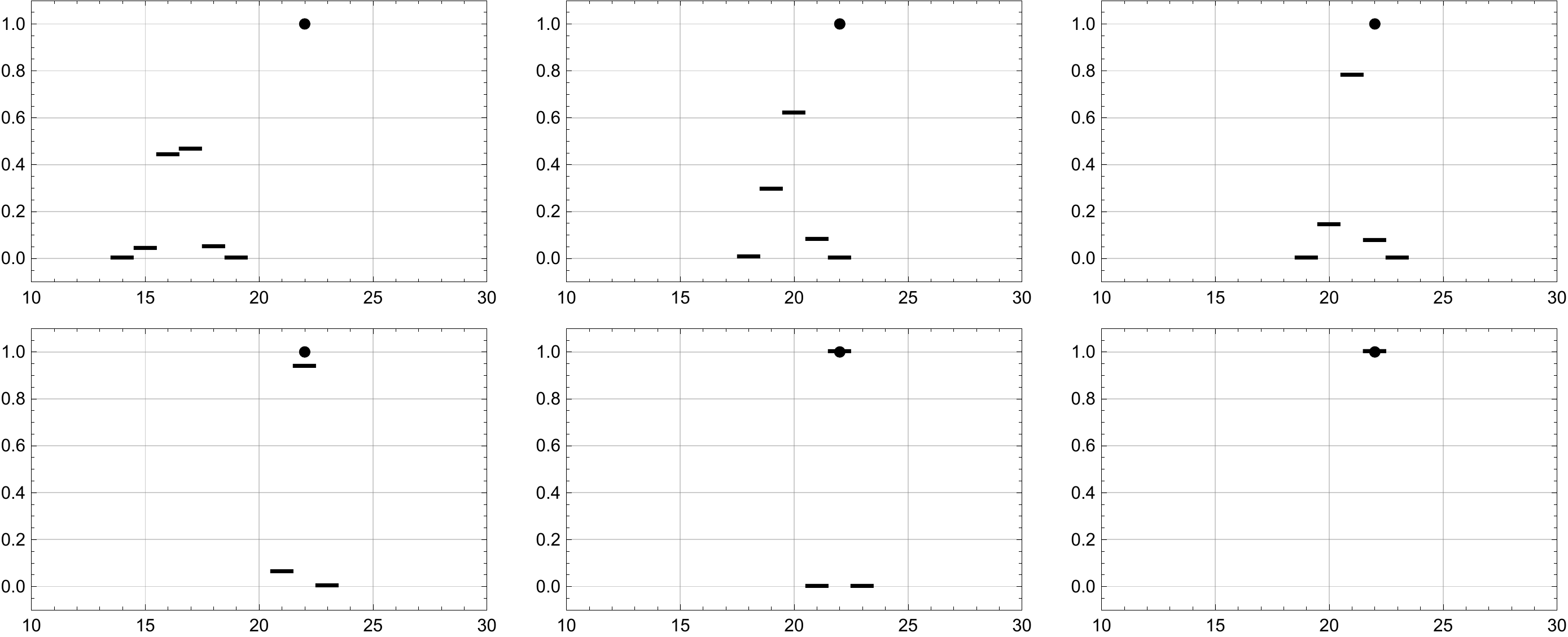}
\put(3,38){\small $n = 20$}
\put(37,38){\small $n = 50$}
\put(71,38){\small $n = 100$}
\put(3,17){\small $n = 400$}
\put(37,17){\small $n = 1000$}
\put(71,17){\small $n = 2000$}
\put(50,-4){$t_\epsilon^{(2)}$}
\put(-4,12){\rotatebox{90}{\small Rel. Frequency}}
\end{overpic}
\vspace{.1in}
\caption{\label{f:halting} A demonstration that $t_{\epsilon}^{(2)}$ becomes almost deterministic as $n$ increases for $d = 0.2$, $\epsilon = 6.627 \times 10^{-8}$ and  $\beta = 1$ using $20,000$ samples. Each panel gives the empirical halting time distribution for the indicated value of $n$.  For $n$ sufficiently large, $n \geq 2000$, the histogram is extremely concentrated.  }
\end{figure}
\subsubsection{Relation to previous work}

The classical error estimate for the CGA is \cite{Hestenes1952,Greenbaum1989}
\begin{align}\label{eq:CGest}
    \| \vec e_k(W,\vec b) \|_W \leq 2 \left[ \left(\frac{\sqrt{\kappa} -1}{\sqrt{\kappa}+1}\right)^k + \left(\frac{\sqrt{\kappa} -1}{\sqrt{\kappa}+1}\right)^{-k}\right]^{-1}\| \vec e_0(W,\vec b) \|_W,
\end{align}
where $\kappa = \lambda_1/\lambda_n$ is the condition number of $W$.  Here $\lambda_1 \geq \cdots \geq \lambda_n > 0$ are the eigenvalues of $W$.  It is a classical result in random matrix theory \cite{Bai1993} that the condition number of \eqref{eq:W} converges almost surely to $\frac{(1 + \sqrt{d})^2}{(1 - \sqrt{d})^2}$.  Roughly, one then obtains
\begin{align*}
     \| \vec e_k(W,\vec b) \|_W \lesssim 2 \left[ d^{k/2} + d^{-k/2} \right]^{-1}\| \vec e_0(W,\vec b) \|_W,
\end{align*}
which is often just simplified to
\begin{align*}
    \| \vec e_k(W,\vec b) \|_W \lesssim 2 d^{k/2}\| \vec e_0(W,\vec b) \|_W.
\end{align*}
This overestimates the actual error by just a factor of 2.

In \cite{Menon2016a}, the authors used \eqref{eq:CGest} and tail bounds on the condition number to estimate the halting times \eqref{eq:htimes} in the case $d = 1 + o(1)$.  A key observation was that the actual number of iterations appears to be of the same asymptotic order as the estimate obtained using \eqref{eq:htimes}.  This is something that will indeed be true if the error estimate used decays exponentially and turns out to be an overestimate by a constant factor.

\begin{remark}
Of particular interest is this case where $d$ depends on $n$ and $d \to 1$ as $n \to \infty$.  For example, $d = 1 - 1/n^{-1/2}$ was seen in \cite{Deift2015,Deift2014} to produce universal fluctuations for the halting times.  Similarly, one would want to treat the case $\epsilon = \epsilon(n) \to 0$ as $n \to \infty.$
\end{remark}

\begin{remark}
Our calculations in this work apply only to matrices with Gaussian entries.  An important question, one of universality, is if our results hold when this assumption is relaxed.  Indeed, one expects this to be true by the computations in Figures~\ref{f:esamp}, \ref{f:rsamp} and \ref{f:clt} and the wealth of theoretical universality results from random matrix theory \cite{Pillai2014,Bloemendal2016,Bai2007}.
\end{remark}

\section{The bidiagonalization of Wishart matrices and invariance}\label{sec:bidiag}

\begin{definition}\label{d:MP}
For $0 < d \leq 1$ set $m = \lfloor n/d \rfloor $. Let $X$ be an $n \times m$ matrix of iid standard normal random variables ($\beta = 1$) or $X = X_1 + \I X_2$  where $X_1$ and $X_2$ are independent copies of an $n \times m$ matrix of iid standard normal random variables ($\beta = 2$).  Then
\begin{align}
    W_{n,\beta,d} := \frac{1}{\beta m} XX^*
\end{align}
has the $\beta$-Wishart distribution.   The associated empirical spectral measure (ESM) is given by
\begin{align*}
    \mu_{n,\beta,d} = \frac{1}{n} \sum_{j=1}^n \delta_{\lambda_j(n,\beta,d)}
\end{align*}
where
$\lambda_1(n,\beta,d) \geq \lambda_2(n,\beta,d) \geq \cdots \geq \lambda_ n(n,\beta,d)$ are the eigenvalues of $W_{n,\beta,d}$.
Define the averaged EMS $\mathbb E \mu_{n,\beta}$ (or density of states) by 
\begin{align}\label{eq:dos}
    \int f(\lambda) \mathbb E \mu_{n,\beta,d}( \D \lambda) : = \mathbb E \left( \int f(\lambda) \mu_{n,\beta,d}(\D \lambda) \right)
\end{align}
for every\footnote{$C_b(\mathbb R^+)$ denotes bounded continuous functions on $[0,\infty)$.} $f \in C_b(\mathbb R^+)$. 
\end{definition}

\begin{definition}
Marchenko--Pastur law $\mu_{\mathrm{MP},d}$ on $\mathbb R$ is given by the density
\begin{align*}
    \rho_{\mathrm{MP,d}}(x) = \frac{1}{2 \pi d} \frac{\sqrt{|(d_+ - x)(x-d_-)|}}{x} \mathbbm{1}_{[d_-,d_+]}(x), \quad d_\pm = (1 \pm \sqrt{d})^2.
\end{align*}
\end{definition}
The relation of the Marchenko--Pastur law to the eigenvalues of a Wishart matrix is given in the following section.  But we demonstrate this relationship in Figure~\ref{f:mpdisplay}.

\begin{figure}[tbp]
\begin{overpic}[width=.9\linewidth]{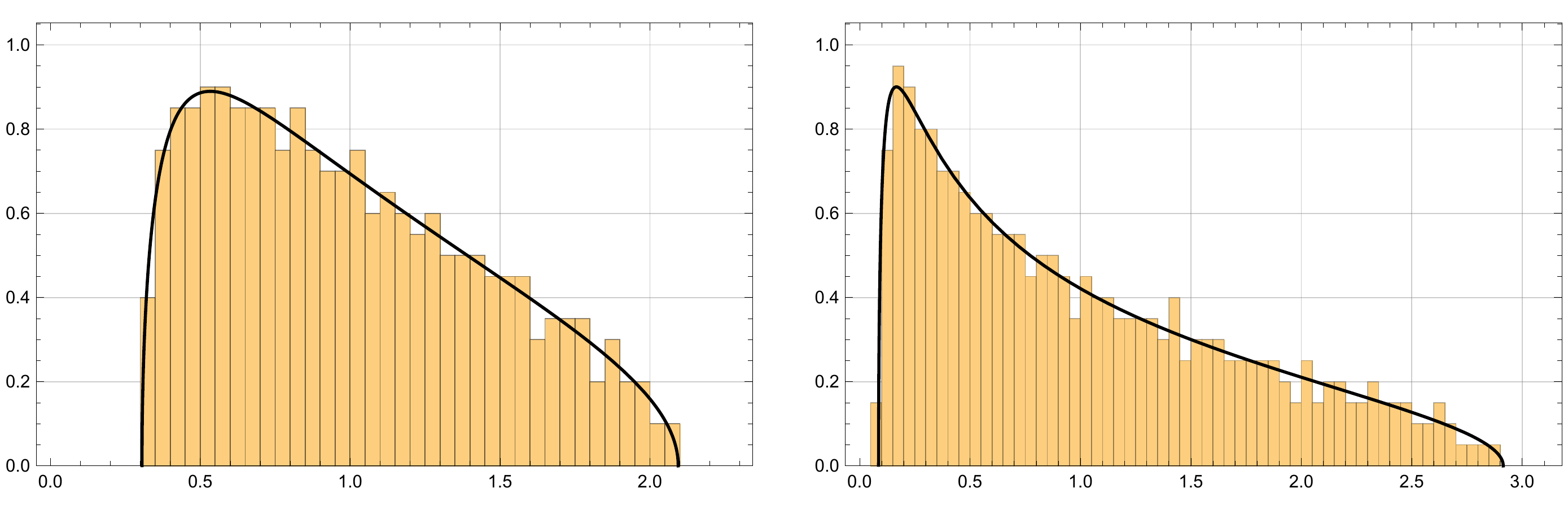}
\put(37,27){$d = 0.2$}
\put(88,27){$d = 0.5$}
\put(68,-2){Eigenvalues}
\put(17,-2){Eigenvalues}
\put(-3,4){\rotatebox{90}{Relative Frequency}}
\end{overpic}
\caption{\label{f:mpdisplay} A demonstration of how the Marchenko--Pastur law relates to the spectrum of a Wishart matrix for $n = 400$.  A histogram for the eigenvalues of one sampled matrix closely approximates the Marchenko--Pastur density.}

\end{figure}

The Wishart distribution is invariant under orthogonal ($\beta = 1$) or unitary ($\beta =2$) conjugation.  Using $\beta = 2$, this means that if $U$ is a random unitary matrix that is independent of $W_{n,\beta,d}$ then
\begin{align*}
    U W_{n,\beta,d} U^* \overset{\text{dist.}}{=} W_{n,\beta,d}.
\end{align*}
For $W_{n,\beta,d} = \frac{1}{\beta m} XX^*$ the Householder bidiagionalization procedure \cite{TrefethenBau} operates on $X$ on the left and the right with $n \times n$ Householder reflections $R_1,R_2,\ldots,R_n$, and $m \times m$ Householder reflections $\tilde R_1,R_2,\ldots,\tilde R_n$,  so that
\begin{align}\label{eq:H}
\begin{bmatrix} H_{n,\beta,d} & 0 \end{bmatrix} : &= R_nR_{n-1}\cdots R_1 X \tilde R_1 \tilde  R_2 \cdot \tilde R_{n} \\
&= \begin{bmatrix} 
 \zeta_{11} & 0 & \cdots &&&& 0 \\
 \zeta_{21} & \zeta_{22}& 0 & \cdots &&& 0\\
 & \ddots & \ddots &   & && \\
 & & \zeta_{n,n-1} & \zeta_{nn}& 0 & \cdots & 0
\end{bmatrix}
\end{align}
where all entries are non-negative.  Because of invariance, $\{\zeta_{ij}\}$ are independent $\chi$-distributed random variables, see \cite{Dumitriu2002} and the references therein.  Specifically,
\begin{align}\label{eq:Hnb}
    H_{n,\beta,d} \overset{\text{dist.}}{=} \begin{bmatrix} \chi_{\beta m} & \\
    \chi_{\beta(n-1)} & \chi_{\beta(m-1)}\\
    & \chi_{\beta(n-2)} & \chi_{\beta(m-2)}\\
     && \ddots & \ddots \\
     &&& \chi_{\beta} & \chi_{\beta(m-n+1)}
    \end{bmatrix}
\end{align}
where all entries are independent.  Define the infinite matrix $\mathbb T_d$ by the entry-wise limit
\begin{align*}
    (\mathbb T_{d})_{ij}  := \lim_{n \to \infty} \frac{1}{\beta m}\mathbb E\left[ \left(H_{n,\beta,d}H_{n,\beta,d}^* \right)_{ij} \right], \quad 1 \leq i,j.
\end{align*}
Therefore
\begin{align*}
    \mathbb T_d &=  \mathbb H_{d} \mathbb H_{d}^*,\\
    \mathbb H_{d} &= \begin{bmatrix} 1 & \\
   \sqrt{d} & 1\\
    & \sqrt{d} & 1\\
     && \ddots & \ddots
    \end{bmatrix}.
\end{align*}
Lastly, define $\mathbb T_{k,d}$ to be the upper-left $k\times k$ submatrix of $\mathbb T_d$.

\subsection{Householder bidiagonalization, the Lanczos iteration and the CG algorithm}

The conjugate gradient algorithm (CGA) for the iterative solution of
\begin{align*}
    W \vec x = \vec b, \quad W > 0
\end{align*}
is given by\\

\noindent\fbox{%
\refstepcounter{alg}
    \parbox{\textwidth}{%
\flushright \boxed{\text{Algorithm~\arabic{alg}: Conjugate Gradient Algorithm\label{a:cga}}}
\begin{enumerate}
    \item $\vec x_0$ is the initial guess.
    \item Set $\vec r_0 = \vec b - W \vec x_0$, $\vec p_0= \vec r_0$.
    \item For $k = 1,2,\ldots,n$
    \begin{enumerate}
        \item Compute $\displaystyle a_{k-1} = \frac{\vec r_{k-1}^* \vec r_{k-1}}{\vec r_{k-1}^* W \vec p_{k-1}}$.
        \item Set $\vec x_{k} = \vec x_{k-1} + a_{k-1} \vec p_{k-1}$.
        \item Set $\vec r_{k} = \vec r_{k-1} - a_{k-1} W \vec p_{k-1}$.
        \item Compute $\displaystyle b_{k-1} = - \frac{\vec r_k^* \vec r_k}{\vec r_{k-1}^* \vec r_{k-1}}$.
        \item Set $\vec p_k =  \vec r_k - b_{k-1} \vec p_{k-1}$.
    \end{enumerate}
\end{enumerate}
    }%
}
\vspace{.05in}

The error at step $k$ is given by $\vec e_k = \vec x - \vec x_k$.  Define the norm $\|\vec y\|_W^2 = \vec e_k^* W \vec e_k$.  A variational characterization of the CGA is that
\begin{align*}
  \|\vec e_k\|_W = \|p_k^\dagger(W)\vec e_0\|_W = \min_{p_k \in \mathbb P_k^{(0)}} \|p_k(W)\vec e_0\|_W,
\end{align*}
where $\mathbb P_k^{(0)} = \{ p : p ~\text{ is a polynomial of degree $k$}, ~ p(0) = 1\}$.  The unique minimizer $p_k^\dagger$ in $\mathbb P_k^{(0)}$ can be described by the Lanczos algorithm.  The Lanczos algorithm is a tridiagonalization algorithm given by\\

\noindent\fbox{%
\refstepcounter{alg}
    \parbox{\textwidth}{%
\flushright \boxed{\text{Algorithm~\arabic{alg}: Lanczos Iteration\label{a:lanczos}}}
\begin{enumerate}
    \item $\vec y_1$ is the initial vector.  Suppose $\|\vec y_1\|_2^2 = \vec y_1^* \vec y_1 = 1$
    \item Set $\beta_0 = 0$
    \item For $k = 1,2,\ldots,n$
    \begin{enumerate}
        \item Compute $\displaystyle \alpha_{k} = (W \vec y_k - \beta_{k-1} \vec y_{k-1})^* \vec y_k$.
        \item Set $\vec v_k = W \vec y_k - \alpha_k \vec y_k - \beta_{k-1} \vec y_{k-1}$.
        \item Compute $\beta_k = \|\vec v_k\|_2$ and if $\beta_k \neq 0$, set $\vec y_{k+1} = \vec v_k/\beta_k$.
    \end{enumerate}
\end{enumerate}
    }%
}
\vspace{.05in}

The Lanczos algorithm produces a tridiagonal matrix $T$
\begin{align*}
    T = T(W,\vec y_1) = \begin{bmatrix} \alpha_1 & \beta_1 \\
    \beta_1 & \alpha_2 & \ddots \\
    & \ddots & \ddots & \beta_{n-1} \\
    & & \beta_{n-1} & \alpha_n \end{bmatrix}
\end{align*}
and $T = Q W Q^*$ for some unitary matrix $Q$.  We use $T_k = T_k(W,\vec y_1), k = 1,2,\ldots,n$ to denote the upper-left $k\times k$ subblock of $T$.  Then, it is well-known that (see \cite{Greenbaum1989}, for example)
\begin{align}\label{eq:pks}
    p_k^\dagger(\lambda) = \frac{\det \left(T_k\left(W,\frac{\vec r_0}{\|\vec r_0\|} \right) - \lambda I \right)}{ \det T_k\left(W,\frac{\vec r_0}{\|\vec r_0\|} \right)},
\end{align}
where $\vec r_0 = \vec b - W \vec x_0$ as above.  Then, write $W = U \Lambda U^*, ~\Lambda = \mathrm{diag}(\lambda_1,\lambda_2,\ldots,\lambda_n)$ and for $\vec x_0 = \vec 0$, so that $\vec e_0 = \vec x$,
\begin{align*}
    \|\vec e_k\|_{W^{\ell}}^2 &= \| W^{\ell/2} p_k^\dagger(W) \vec e_0\|^2 = \| \Lambda^{\ell/2} p_k^\dagger(\Lambda) U^*\vec e_0\|^2= \| \Lambda^{\ell/2} p_k^\dagger(\Lambda) \Lambda^{-1} U^*\vec b\|^2\\
    &= \sum_{j=1}^n \lambda_j^{\ell-2} p_k^\dagger(\lambda_j)^2 \omega_j, \quad  \boldsymbol \omega = (\omega_j), \quad \omega_j = |(U^*\vec b)_j|^2, \quad 1 \leq j \leq n.
\end{align*}
Now, consider the special case $\vec b = \vec b_0 := [1,0,\ldots,0]^T$, so that $\vec r_0 = \vec b_0$. We further analyze the relation $Q^*TQ = W$ with $\vec y_1 = \vec b_0$.  The Lanczos algorithm gives the matrix representation of $W$ in the orthonormal basis found by applying the Gram--Schmidt procedure to the sequence
\begin{align*}
    \vec y_1, W \vec y_1, \ldots, W^{n-1} \vec y_1.
\end{align*}
So, the first vector is $\vec b_0$, and so the first column of $Q$ is $\vec b_0$.  The main consequence of this is that that first components of the eigenvectors\footnote{This is true modulo permutations and normalizations.} of $W$ are the same as those of $T$.  \\

\textbf{Basic assumption:} Henceforth, throughout the paper we will assume $\vec x_0 = \vec 0$.\\

Finally, we make a simple observation that the Householder bidiagonalization procedure \cite{TrefethenBau} applied to $X$ where $W = XX^*$ (or Householder tridiagonalization applied to $W$) leaves the eigenvalues of $W$ unchanged and also leaves the first components of the eigenvectors unchanged.  So, provided that Lanczos completes ($\beta_k \neq 0$ for $k = 1,\ldots,n-1$), the Householder bidiagonalization must produce $T(W,\vec b_0)$.  This is indeed true because a Jacobi matrix is uniquely defined by eigenvalues and first-components of eigenvectors (see, e.g.~\cite{Deift1985}).

\section{Main results}\label{sec:main}
The proof of our main theorem is given in Section~\ref{sec:proof}.  The convention used in this paper is that $\beta$ and $d$ are fixed constants. The symbols $C,c,C',c'$ with an assortment of subscripts will be used to denote constants and their (possible) dependencies.  We suppress any dependence of these constants on $\beta$ but include dependence on $d$, with a view to forthcoming work where we will allow $d$ to vary. 

\begin{theorem}\label{t:estimates}
Assume the conjugate gradient algorithm is applied to solve $W_{n,\beta,d} \vec x = \vec b$ where $\|\vec b\|_2 = 1$ is a (possibly) random vector, independent of $W_{n,\beta,d} = W$ and $0 < d <1$.  Let $\vec e_k = \vec x - \vec x_k$, $k = 0,1,2,\ldots$ be the associated error vectors.  
\begin{enumerate}
\item For any fixed $\ell \in \mathbb Z$ and $n > 1$ there exists a constant $C_{\ell,d,k}>0$ such that
\begin{align*}
    \mathbb E\left[ \left| \|\vec e_k\|_{W^{\ell}}^2 - \int \lambda^{\ell-2} \det(\mathbb T_{k,d} - \lambda I)^2 \mu_{\mathrm{MP},d}(\D \lambda) \right| \right] \leq C_{\ell,d,k} \frac{\log n}{\sqrt{n}}.
\end{align*}
\item Furthermore
\begin{align*}
    \mathbb P \left( \left| \|\vec e_k\|_{W^{\ell}}^2 -  \int \lambda^{\ell -2} \det (\mathbb T_{k,d} - \lambda I)^2 \mathbb E \mu_{n,\beta,d}(\D \lambda) \right| > t \right) \leq C'_{\ell,d,k} n \E^{- c_{\ell,d,k} n h(t)},
\end{align*}
for some constant $C'_{\ell,d,k} > 0$ and a non-decreasing function $h(t)$ that satisfies $h(t) > 0$ for $t > 0$.
\item Lastly, if $d =1$ and $\ell \geq 2$, (1) and (2) hold.
\end{enumerate}
\end{theorem}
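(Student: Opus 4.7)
The plan is to reduce the problem to concentration on a finite principal block of the Lanczos tridiagonal matrix. By orthogonal/unitary invariance of $W_{n,\beta,d}$, the distribution of $\|\vec e_k\|_{W^\ell}^2$ is unchanged when $\vec b$ is replaced by $\vec b_0$, so I make that reduction throughout. Then $\vec r_0 = \vec b_0$, and the Lanczos--Householder equivalence from Section~\ref{sec:bidiag} identifies $T = T(W,\vec b_0)$ in distribution with $H_{n,\beta,d} H_{n,\beta,d}^*/(\beta m)$ from \eqref{eq:Hnb}, whose entries are independent $\chi$ random variables. Using $(f(W))_{11} = (f(T))_{11}$ for any (analytic) $f$,
\begin{align*}
\|\vec e_k\|_{W^\ell}^2 = (T^{\ell-2} p_k^\dagger(T)^2)_{11}, \quad p_k^\dagger(\lambda) = \det(T_k - \lambda I)/\det T_k.
\end{align*}
A direct calculation gives $\mathbb T_{k,d} = LL^*$ with $L$ unit lower bidiagonal, so $\det \mathbb T_{k,d} = 1$; hence the target integral is the natural $n \to \infty$ formal limit, obtained by replacing $T_k$ with $\mathbb T_{k,d}$ and $\mu_{n,\beta,d}$ (equivalently, the spectral measure $\sigma_n$ at $\vec b_0$) by $\mu_{\mathrm{MP},d}$.

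The case $\ell \geq 2$, which covers part (3), is the easier regime: $(T^{\ell-2} p_k^\dagger(T)^2)_{11}$ is a polynomial of degree $2k + \ell - 2$ in $T$, so by tridiagonality it depends only on the upper-left block $T_{2k+\ell-1}$, and is a polynomial function of the $O(k + \ell)$ independent $\chi$ random variables of \eqref{eq:Hnb} that determine this block (through both the entries of $T$ and the coefficients of $p_k^\dagger$, which depend on $T_k$). These $\chi$ variables concentrate around their deterministic limits at rate $1/\sqrt n$ with Gaussian tails, so a Taylor expansion about the limit combined with classical $\chi$ concentration yields both the $L^1$ bound of order $1/\sqrt n$ (no logarithm is needed here) and the exponential concentration claimed in (2).

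For $\ell < 2$ with $0 < d < 1$, $T^{\ell-2}$ is not polynomial in $T$, so I work through the spectral representation $\|\vec e_k\|_{W^\ell}^2 = \sum_j \omega_j \lambda_j^{\ell-2} p_k^\dagger(\lambda_j)^2$ with weights $\omega_j = |U_{1j}|^2$, where $U$ is the Haar eigenvector matrix of $W$, independent of $\Lambda$ by invariance. Decompose the error from the target as $(A) + (B) + (C)$, where
\begin{align*}
(A) &= \sum_j \omega_j \lambda_j^{\ell-2}\bigl[p_k^\dagger(\lambda_j)^2 - \det(\mathbb T_{k,d} - \lambda_j I)^2\bigr], \\
(B) &= \sum_j (\omega_j - 1/n)\,\lambda_j^{\ell-2} \det(\mathbb T_{k,d} - \lambda_j I)^2, \\
(C) &= \int \lambda^{\ell-2}\det(\mathbb T_{k,d} - \lambda I)^2\,(\D\mu_{n,\beta,d}(\lambda) - \D\mu_{\mathrm{MP},d}(\lambda)).
\end{align*}
Term $(A)$ is controlled by $\|T_k - \mathbb T_{k,d}\|$ ($\chi$ concentration at rate $1/\sqrt n$) times a polynomial in $\lambda_{\max}(W)$, together with the Davidson--Szarek lower edge bound $\lambda_{\min}(W) \geq (1-\sqrt d)^2 - o(1)$ holding with high probability --- valid precisely because $d < 1$ --- to tame $\lambda^{\ell-2}$. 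Conditionally on $\Lambda$, term $(B)$ is a mean-zero quadratic in entries of $U$ with variance of order $1/n$, and concentration of Haar measure on the sphere supplies the tails. Term $(C)$ is a smooth linear spectral statistic, handled by the concentration/CLT for linear statistics of Wishart eigenvalues cited in Section~\ref{sec:tech}.

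The main obstacle is term $(A)$: obtaining the sharp rate requires simultaneous exponential control of $\lambda_{\max}(W)^{O(k)}$, whose upper edge tail is what produces the $\log n$ factor in (1) after integration, and of $\lambda_{\min}(W)^{\ell-2}$. The concentration function $h(t)$ in (2) emerges as the infimum of the rate functions from the four independent sources of randomness --- $\chi$ variables in $T_k$, Haar weights, linear-statistic concentration, and spectral-edge large deviations --- giving $h(t) \asymp t^2$ for small $t$. The $d = 1$ portion of (3) then follows because for $\ell \geq 2$ none of the small-eigenvalue control enters the argument.
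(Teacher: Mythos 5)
Your overall strategy coincides with the paper's: reduce to $\vec b_0$ by invariance, pass through the Lanczos/Householder identification $T \overset{\mathrm{dist.}}{=} H_{n,\beta,d}H_{n,\beta,d}^*/(\beta m)$, and split the difference $\|\vec e_k\|_{W^\ell}^2 - \mathfrak e_{\ell,k,d}^2$ into a piece replacing the random polynomial $p_k^\dagger$ by its deterministic limit, a piece replacing the Haar weights by $1/n$, and a piece replacing the empirical spectral measure by Marchenko--Pastur. Your $(A),(B),(C)$ correspond to the paper's Lemmas~\ref{l:1}--\ref{l:3} and Lemma~\ref{l:powers}, just grouped differently. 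The observation that $(f(\mathbb T_d))_{11} = \int f\, d\mu_{\mathrm{MP},d}$ and $\det \mathbb T_{k,d} = 1$ is correct and is implicit in the paper's calculation.

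There is, however, a genuine gap in the $\ell < 2$ regime, which is where the real work lies. You invoke the Davidson--Szarek lower-edge bound to ``tame $\lambda^{\ell-2}$,'' but that bound only gives $\mathbb P(\lambda_n \le \epsilon) \le 2\E^{-cn(1-\sqrt d - \sqrt\epsilon)^2}$, which saturates at $2\E^{-cn(1-\sqrt d)^2}$ as $\epsilon \downarrow 0$ and therefore cannot control $\mathbb E[\lambda_n^{-k}]$ or even guarantee it is finite. The $L^1$ estimate in part (1) requires a moment bound $\mathbb E[\lambda_n^{-k}] \le C_k$ uniform in $n$ (Corollary~\ref{c:invbound}), which in turn requires the behavior of the marginal density of $\lambda_n$ near $0$ (Lemma~\ref{l:marginal_bound}); your sketch does not produce this. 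The same issue reappears in your term $(C)$: you describe $\lambda^{\ell-2}\det(\mathbb T_{k,d}-\lambda I)^2$ as a ``smooth linear spectral statistic,'' but for $\ell<2$ it is singular at $0$, so the Lipschitz concentration/CLT results (Theorem~\ref{t:lip}, Corollary~\ref{cor:dev}) do not apply directly — the paper handles this via explicit truncation and the exponential smallness of the contribution from $[0,d_-/2]$ (Lemma~\ref{l:powers}), again using the marginal density. Finally, a minor point: the $\log n$ in part (1) does not come from the upper spectral edge (moments of $\lambda_1$ are easily uniformly bounded) but from $\mathbb E[d_{\mathrm{KS}}(\mu_n,\nu_n)] \lesssim (\log n)/\sqrt n$ in Lemma~\ref{l:KSdist}, i.e., from the maximum of $n$ sub-exponential variables in the weight concentration; your Haar-concentration treatment of $(B)$ might in fact avoid the $\log$, but you have misattributed its origin.
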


\begin{theorem}\label{t:ek}
In the setting of Theorem~\ref{t:estimates}, for $0 < d < 1$ and $\ell \in \mathbb Z$, define\footnote{If $k = 0$, $\mathfrak e^2_{\ell,0,d} := \int \lambda^{\ell-2} \mu_{\mathrm{MP},d}(\D \lambda)$.}
\begin{align}\label{eq:klim}
    \mathfrak e^2_{\ell,k,d}:=\int \lambda^{\ell -2} \det(\mathbb T_{k,d} - \lambda I)^2 \mu_{\mathrm{MP},d}(\D \lambda).
\end{align}
Then as $k \to \infty$, $\mathfrak e^2_{\ell,k,d} \to 0$.  Furthermore,
\begin{align*}
    \mathfrak e^2_{1,k,d} = \frac{d^k}{1-d}.
\end{align*}
For $d = 1$, \eqref{eq:klim} holds if $ \ell \geq 2$, and for $0 < d \leq 1$
\begin{align*}
    \mathfrak e^2_{2,k,d} &= d^k,\\
    \mathfrak e^2_{3,k,d} &= d^k \begin{cases} 1 + d & k \geq 1,\\ 1 & k = 0. \end{cases}
\end{align*}
\end{theorem}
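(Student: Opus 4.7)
The plan is to reduce each claimed identity to a single matrix entry of a power of $\mathbb{T}_d$, using the factorization $\mathbb{T}_d=\mathbb{H}_d\mathbb{H}_d^*$. First I would record that $\mu_{\mathrm{MP},d}$ is the spectral measure of $\mathbb{T}_d$ at the first coordinate vector $e_1$, which is consistent with the setup of Section~\ref{sec:bidiag}: $\mathbb{T}_d$ arises as the entrywise limit of the Lanczos tridiagonalizations $T(W_{n,\beta,d},\vec b_0)$, whose averaged spectral measures at $e_1$ are the densities of states converging to $\mu_{\mathrm{MP},d}$. Because $\mathbb{T}_d$ is tridiagonal, the characteristic polynomials $P_k(\lambda):=\det(\mathbb{T}_{k,d}-\lambda I)$ satisfy the three-term recurrence attached to $\mathbb{T}_d$, and since every subdiagonal entry of $\mathbb{T}_d$ equals $\sqrt{d}$ one has $\|P_k\|_{L^2(\mu_{\mathrm{MP},d})}^2=d^k$. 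By the defining property of Lanczos the associated orthonormal polynomials $p_k$ satisfy $p_k(\mathbb{T}_d)e_1=e_{k+1}$, and since $P_k=\pm d^{k/2}\,p_k$ one obtains
\[
P_k(\mathbb{T}_d)\,e_1 \;=\; \pm\,d^{k/2}\,e_{k+1}.
\]

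The spectral theorem for $\mathbb{T}_d$ then gives the master identity
\[
\mathfrak{e}^2_{\ell,k,d}\;=\;\bigl\langle P_k(\mathbb{T}_d)e_1,\;\mathbb{T}_d^{\ell-2}\,P_k(\mathbb{T}_d)e_1\bigr\rangle\;=\;d^k\,\bigl(\mathbb{T}_d^{\ell-2}\bigr)_{k+1,k+1},
\]
from which most of the theorem drops out by inspection. The case $\ell=2$ gives $\mathfrak{e}^2_{2,k,d}=d^k$ immediately. For $\ell=3$ one reads off the diagonal entries of $\mathbb{T}_d$ recorded in Section~\ref{sec:bidiag} ($1$ at position $(1,1)$ and $1+d$ elsewhere), producing the claimed split between $k=0$ and $k\ge 1$. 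For $\ell=1$ I would compute $(\mathbb{T}_d^{-1})_{k+1,k+1}$ via $\mathbb{T}_d^{-1}=(\mathbb{H}_d^*)^{-1}\mathbb{H}_d^{-1}$: writing $\mathbb{H}_d=I+\sqrt{d}\,L$ with $L$ the one-step down-shift, Neumann inversion gives $(\mathbb{H}_d^{-1})_{ij}=(-\sqrt{d})^{\,i-j}\mathbf{1}_{\{i\ge j\}}$, so
\[
\bigl(\mathbb{T}_d^{-1}\bigr)_{k+1,k+1}\;=\;\|\mathbb{H}_d^{-1}e_{k+1}\|_2^2\;=\;\sum_{j\ge k+1} d^{\,j-k-1}\;=\;\frac{1}{1-d},
\]
giving $\mathfrak{e}^2_{1,k,d}=d^k/(1-d)$.

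For the convergence $\mathfrak{e}^2_{\ell,k,d}\to 0$ with $0<d<1$ and $\ell\in\mathbb{Z}$ fixed, I would note that the spectrum of $\mathbb{T}_d$ is contained in $[d_-,d_+]$ with $d_->0$, so $\mathbb{T}_d^{\ell-2}$ is a bounded operator and its diagonal entries are uniformly bounded in $k$; multiplying by $d^k\to 0$ closes the argument. For $d=1$ one has $d_-=0$, so $\mathbb{T}_1$ is still bounded (spectrum $[0,4]$) while $\mathbb{T}_1^{-1}$ is not; the positive powers $\mathbb{T}_1^{\ell-2}$ with $\ell\ge 2$ remain bounded operators and the identities for $\mathfrak{e}^2_{2,k,d}$ and $\mathfrak{e}^2_{3,k,d}$ go through unchanged.

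The main obstacle is really the identification of $\mathbb{T}_d$ as the Jacobi matrix of $\mu_{\mathrm{MP},d}$; once that is granted, the rest reduces to a three-term-recurrence observation together with a geometric Neumann series. This identification combines two ingredients from Section~\ref{sec:bidiag}: the tridiagonal entries of $T(W_{n,\beta,d},\vec b_0)$ converge in expectation to those of $\mathbb{T}_d$ by construction, while the averaged empirical spectral measure $\mathbb{E}\mu_{n,\beta,d}$ converges to $\mu_{\mathrm{MP},d}$ by the Marchenko--Pastur theorem; together these force the spectral measure of $\mathbb{T}_d$ at $e_1$ to equal $\mu_{\mathrm{MP},d}$.
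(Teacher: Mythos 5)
Your approach is genuinely different from the paper's and, granting one foundational identification, yields a cleaner and more conceptual derivation. The paper's proof is a direct computation: the change of variables $\lambda = 2x\sqrt d + 1 + d$ pulls the integral onto $[-1,1]$, $\det(\mathbb T_{k,d}-\lambda I)$ is shown via the three-term recurrence \eqref{eq:recur} to equal $d^{k/2}[U_k(-x)-\sqrt d\,U_{k-1}(-x)]$, the weight $\lambda^{\ell-2}$ is expanded in Chebyshev series for $\ell=1,2,3$ separately, and the Chebyshev orthogonality and linearization relations reduce everything to explicit geometric sums. Your master formula $\mathfrak e^2_{\ell,k,d}=d^k\,(\mathbb T_d^{\ell-2})_{k+1,k+1}$ repackages all of that in one stroke and replaces the case-by-case Chebyshev work by the Neumann inversion of $\mathbb H_d$ and a glance at the diagonal of $\mathbb T_d$; the decay claim follows for free from $\sigma(\mathbb T_d)\subset[d_-,d_+]$ with $d_->0$. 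That is a genuinely tidier route to the same identities.

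The gap is in the key step you flag yourself: identifying the spectral measure of $\mathbb T_d$ at $e_1$ with $\mu_{\mathrm{MP},d}$. The argument you give---entrywise convergence of $\mathbb E[T(W_{n,\beta,d},\vec b_0)]$ to $\mathbb T_d$ plus convergence of $\mathbb E\mu_{n,\beta,d}$ to $\mu_{\mathrm{MP},d}$---does not by itself close the loop, because the spectral measure at $e_1$ is a nonlinear functional of the matrix entries, so you cannot pass the expectation through. What actually works is a moment argument: $(\mathbb T_d^p)_{1,1}=\lim_n\mathbb E\bigl[(T(W_{n,\beta,d},\vec b_0)^p)_{1,1}\bigr]$ requires concentration of the chi-variables and uniform integrability (available from Lemma~\ref{l:chibound}), not merely convergence of the means of the individual entries; then $\mathbb E\bigl[(T^p)_{1,1}\bigr]=\int\lambda^p\,\mathbb E\nu_{n,\beta,d}(\D\lambda)=\int\lambda^p\,\mathbb E\mu_{n,\beta,d}(\D\lambda)$ uses the independence of eigenvalues and eigenvectors (as the paper does in Lemma~\ref{l:3}); finally compact support of $\mu_{\mathrm{MP},d}$ makes the Hamburger moment problem determinate, promoting moment agreement to equality of measures. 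Alternatively one can verify directly that the polynomials generated by \eqref{eq:recur} are orthogonal in $L^2(\mu_{\mathrm{MP},d})$---but at that point you are essentially reproducing the paper's Chebyshev computation, which is what the identification lets you avoid. Either fix is straightforward, but one of them must be supplied before the spectral-theorem step is legitimate.
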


\begin{corollary}
In the setting of Theorem~\ref{t:estimates}, for $0 < d < 1$ and $\ell \in \mathbb Z$ and $n > 1$
\begin{align*}
    &\|\vec e_k\|_{W^{\ell}} \overset{\text{a.s.}}{\to} \mathfrak e_{\ell,k,d},\\
    &\mathbb E\left[ \left| \|\vec e_k\|_{W^{\ell}} - \mathfrak e_{\ell,k,d} \right|\right] \leq C_{\ell,d,k} \frac{\log n}{\sqrt{n}}.
\end{align*}
If $d =1$ these relations hold for $\ell \geq 2$.
\end{corollary}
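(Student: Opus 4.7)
The Corollary is a clean postprocessing of Theorem~\ref{t:estimates}: I use the elementary inequality $|a-b|\leq |a^2-b^2|/b$, valid for $a\geq 0$, $b>0$, to pass from the squared-norm bound in Theorem~\ref{t:estimates}(1) to the unsquared-norm bound, and continuity of $\sqrt{\,\cdot\,}$ on $[0,\infty)$ combined with Theorem~\ref{t:estimates}(2) to obtain the almost-sure convergence.

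For the $L^1$ bound I set $a=\|\vec e_k\|_{W^\ell}\geq 0$ and $b=\mathfrak e_{\ell,k,d}$, take expectations, and invoke Theorem~\ref{t:estimates}(1); the rate $\log n/\sqrt n$ is preserved at the cost of absorbing a factor of $1/\mathfrak e_{\ell,k,d}$ into the constant. The positivity $\mathfrak e_{\ell,k,d}>0$ must then be verified. When $0<d<1$ the Marchenko--Pastur density is strictly positive on $[d_-,d_+]$ with $d_-=(1-\sqrt d)^2>0$, the factor $\lambda^{\ell-2}$ is strictly positive there for every $\ell\in\mathbb Z$, and $\det(\mathbb T_{k,d}-\lambda I)^2$ is a nonzero polynomial of degree $2k$ vanishing at only finitely many points, so the integral in \eqref{eq:klim} is strictly positive. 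When $d=1$ and $\ell\geq 2$ the same reasoning applies, noting that $\lambda^{\ell-2}$ is locally integrable against $\rho_{\mathrm{MP},1}$ at the origin; Theorem~\ref{t:ek} also supplies explicit positive values for the small $\ell$ of primary interest.

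For the almost-sure convergence I first prove $\|\vec e_k\|_{W^\ell}^2\to\mathfrak e_{\ell,k,d}^2$ a.s.\ and then take square roots. Let $B_n:=\int\lambda^{\ell-2}\det(\mathbb T_{k,d}-\lambda I)^2\,\mathbb E\mu_{n,\beta,d}(\D\lambda)$. For each fixed $t>0$, Theorem~\ref{t:estimates}(2) produces a tail bound that is summable in $n$, so the first Borel--Cantelli lemma gives $\limsup_n|\|\vec e_k\|_{W^\ell}^2-B_n|\leq t$ a.s.; taking $t\to 0$ through a countable sequence yields $\|\vec e_k\|_{W^\ell}^2-B_n\to 0$ almost surely. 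Separately, Theorem~\ref{t:estimates}(1) gives $L^1$ (hence in-probability) convergence $\|\vec e_k\|_{W^\ell}^2\to\mathfrak e_{\ell,k,d}^2$, which combined with the in-probability convergence $\|\vec e_k\|_{W^\ell}^2-B_n\to 0$ forces the deterministic convergence $B_n\to\mathfrak e_{\ell,k,d}^2$. Consequently $\|\vec e_k\|_{W^\ell}^2\to\mathfrak e_{\ell,k,d}^2$ a.s., and continuity of $\sqrt{\,\cdot\,}$ upgrades this to $\|\vec e_k\|_{W^\ell}\to\mathfrak e_{\ell,k,d}$ a.s. The $d=1$, $\ell\geq 2$ case runs verbatim using part (3) of Theorem~\ref{t:estimates}. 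The only point requiring slight care is the positivity $\mathfrak e_{\ell,k,d}>0$ needed for the $L^1$ transfer; I do not expect any substantial obstacle beyond that.
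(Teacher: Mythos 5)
Your proposal is correct and matches the paper's approach: the $L^1$ bound via the elementary factorization $\bigl|\,\|\vec e_k\|_{W^\ell} - \mathfrak e_{\ell,k,d}\,\bigr| \leq \mathfrak e_{\ell,k,d}^{-1}\bigl|\,\|\vec e_k\|_{W^\ell}^2 - \mathfrak e_{\ell,k,d}^2\,\bigr|$ combined with Theorem~\ref{t:estimates}(1), and the almost-sure convergence via Borel--Cantelli applied to Theorem~\ref{t:estimates}(2). You supply slightly more detail than the paper does (verifying positivity of $\mathfrak e_{\ell,k,d}$ and bridging the centering $B_n = \int\lambda^{\ell-2}\det(\mathbb T_{k,d}-\lambda I)^2\,\mathbb E\mu_{n,\beta,d}(\D\lambda)$ to its limit $\mathfrak e_{\ell,k,d}^2$, which the paper leaves implicit and which otherwise follows from Lemma~\ref{l:powers}), but the underlying argument is the same.
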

\begin{proof}
The first claim follows from the Borel--Cantelli Lemma.  The second follows from the observation
\begin{align*}
    \|\vec e_k\|_{W^{\ell}}^2 - \mathfrak e_{\ell,k,d}^2 = \left( \|\vec e_k\|_{W^{\ell}} - \mathfrak e_{\ell,k,d}\right) \left(\|\vec e_k\|_{W^{\ell}} + \mathfrak e_{\ell,k,d} \right),
\end{align*}
which gives
\begin{align*}
    \left| \|\vec e_k\|_{W^{\ell}} - \mathfrak e_{\ell,k,d}\right| \leq \mathfrak e_{\ell,k,d}^{-1} \left| \|\vec e_k\|_{W^{\ell}}^2 - \mathfrak e_{\ell,k,d}^2 \right|.
\end{align*}
\end{proof}
The last of our main results is almost just a corollary of the above theorems and it concerns halting times
(i.e. runtimes or iteration counts, recall \eqref{eq:htimes}):
\begin{align*}
    t^{(1)}_{\epsilon} &= t^{(1)}_{\epsilon}(W_{n,\beta,d},\vec b) = \min\{ k: \|\vec e_k\|_{W_{n,\beta,d}} < \epsilon \},\\
    t^{(2)}_{\epsilon} &= t^{(2)}_{\epsilon}(W_{n,\beta,d},\vec b) = \min\{ k: \|\vec r_k\|_2 < \epsilon \}.
\end{align*}
Since  $\|\vec e_k\|_{W}$ converges almost surely to $\mathfrak e_{1,k,d}$ and $\|\vec r_k\|_{2} = \|\vec e_k\|_{W^2}$ converges almost surely to $\mathfrak e_{2,k,d}$ we produce the candidate limit halting times
\begin{align*}
    \tau^{(1)}_{\epsilon}(\beta,d) &= \left\lceil \frac{2\log \epsilon + \log (1-d)}{\log d} \right\rceil, \quad \epsilon^2 < (1-d)^{-1},\\
    \tau^{(2)}_{\epsilon}(\beta,d) &= \left\lceil \frac{2\log \epsilon}{\log d} \right\rceil, \quad \epsilon < 1.
\end{align*}

\begin{theorem}\label{t:halting}
In the setting of Theorem~\ref{t:estimates}, $0 < d <1$, for $\ell = 1,2$ suppose that $\epsilon \neq \mathfrak e_{\ell,k,d}$ for $k = 0,1,2,\ldots$, $\epsilon < \mathfrak e_{\ell,0,d}$, then\footnote{Note that $\epsilon < \mathfrak e_{\ell,0,d}$ is just a statement that $\epsilon^2 < (1-d)^{-1}$ for $\ell = 1$ and $\epsilon < 1$ for $\ell = 2$, and so $\tau_{\epsilon}^{(\ell)}(\beta,d) \geq 1$, $\ell = 1,2$.}
\begin{align*}
    \lim_{n \to \infty} \mathbb P \left( t^{(\ell)}_{\epsilon}(W_{n,\beta,d},\vec b) = \tau^{(\ell)}_{\epsilon}(\beta,d)  \right) = 1.
\end{align*}
If $\epsilon = \mathfrak e_{\ell,k,d}$, i.e. $k = \tau^{(\ell)}(\beta,d)$, for $k > 0$ then
\begin{align}\label{eq:2vals}
    \lim_{n \to \infty} \mathbb P \left( t^{(\ell)}_{\epsilon}(W_{n,\beta,d},\vec b) = \tau^{(\ell)}_{\epsilon}(\beta,d) ~~\text{ or }~~ t^{(\ell)}_{\epsilon}(W_{n,\beta,d},\vec b) = \tau^{(\ell)}_{\epsilon}(\beta,d) + 1 \right) = 1.
\end{align}
\end{theorem}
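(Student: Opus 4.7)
The plan is to translate the event $\{t_\epsilon^{(\ell)}(W_{n,\beta,d},\vec b) = \tau\}$, with $\tau = \tau_\epsilon^{(\ell)}(\beta,d)$, into a finite intersection of concentration statements and then apply Theorem~\ref{t:estimates}. Writing the halting-time event explicitly,
\begin{equation*}
\{t_\epsilon^{(\ell)} = \tau\} = \bigcap_{k=0}^{\tau-1} \{ \|\vec e_k\|_{W^\ell}^2 \geq \epsilon^2 \} \cap \{ \|\vec e_\tau\|_{W^\ell}^2 < \epsilon^2 \},
\end{equation*}
the goal becomes to control each of the $\tau+1$ inequalities simultaneously. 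From the explicit formulas $\mathfrak e^2_{1,k,d} = d^k/(1-d)$ and $\mathfrak e^2_{2,k,d} = d^k$ in Theorem~\ref{t:ek}, the sequence $(\mathfrak e^2_{\ell,k,d})_{k \geq 0}$ is strictly decreasing and tends to $0$; the non-boundary hypothesis $\epsilon \neq \mathfrak e_{\ell,k,d}$ combined with the definition of $\tau$ as a ceiling then produces
\begin{equation*}
\mathfrak e^2_{\ell,\tau - 1,d} > \epsilon^2 > \mathfrak e^2_{\ell,\tau,d},
\end{equation*}
and hence $\mathfrak e^2_{\ell,k,d} > \epsilon^2$ for every $0 \leq k \leq \tau - 1$. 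I would choose $\delta > 0$ strictly smaller than $\epsilon^2 - \mathfrak e^2_{\ell,\tau,d}$ and than $\min_{0 \leq k \leq \tau - 1}(\mathfrak e^2_{\ell,k,d} - \epsilon^2)$.

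For each fixed $k \in \{0, 1, \ldots, \tau\}$, Theorem~\ref{t:estimates}(1) together with Markov's inequality yields
\begin{equation*}
\mathbb P\bigl( \bigl| \|\vec e_k\|_{W^\ell}^2 - \mathfrak e^2_{\ell,k,d} \bigr| > \delta \bigr) \leq \frac{C_{\ell,d,k}\log n}{\delta \sqrt n} \to 0
\end{equation*}
as $n \to \infty$; alternatively, Theorem~\ref{t:estimates}(2) combined with the convergence $\mathbb E\mu_{n,\beta,d} \to \mu_{\mathrm{MP},d}$ gives the same statement at exponential rate. A union bound over the $\tau + 1$ (finitely many) indices forces all of the inequalities $\|\vec e_k\|_{W^\ell}^2 > \epsilon^2$ for $k \leq \tau - 1$ and $\|\vec e_\tau\|_{W^\ell}^2 < \epsilon^2$ to hold simultaneously with probability tending to one, proving the first part of the theorem.

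For the boundary case $\epsilon = \mathfrak e_{\ell,\tau,d}$ with $\tau \geq 1$, the same argument applies at the indices $k \in \{0,\ldots,\tau - 1,\tau + 1\}$, now with $\delta < \min\{\epsilon^2 - \mathfrak e^2_{\ell,\tau + 1,d},\,\mathfrak e^2_{\ell,\tau - 1,d} - \epsilon^2\}$: with probability tending to one we obtain $\|\vec e_k\|_{W^\ell} \geq \epsilon$ for all $k \leq \tau - 1$ (forcing $t_\epsilon^{(\ell)} \geq \tau$) and $\|\vec e_{\tau + 1}\|_{W^\ell} < \epsilon$ (forcing $t_\epsilon^{(\ell)} \leq \tau + 1$). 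At the index $k = \tau$, however, $\|\vec e_\tau\|_{W^\ell}^2$ concentrates precisely at $\epsilon^2$ itself, so fluctuations may push it to either side and no one-sided conclusion is available, which is exactly the content of \eqref{eq:2vals}.

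The main substance of the argument therefore lies upstream, in Theorems~\ref{t:estimates} and \ref{t:ek}; once those are in hand, Theorem~\ref{t:halting} is essentially bookkeeping. The only conceptual point worth highlighting is that one needs only \emph{finitely many} indices to be controlled, which is ensured by the geometric decay $\mathfrak e^2_{\ell,k,d} = O(d^k)$ together with the $\delta$-separation of $\epsilon$ from the deterministic values $\mathfrak e_{\ell,k,d}$ in the non-boundary case; without this one would face the problem of uniform concentration as $k$ grows with $n$, which is exactly the phenomenon flagged in the remark on $\epsilon = \epsilon(n) \to 0$.
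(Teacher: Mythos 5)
Your proposal is correct and rests on the same pillars as the paper's proof (Theorem~\ref{t:estimates} for concentration, Theorem~\ref{t:ek} for the strictly decreasing deterministic limits $\mathfrak e^2_{\ell,k,d}$), but it is organized around a cleaner decomposition. You express $\{t^{(\ell)}_\epsilon = \tau\}$ directly as the finite intersection $\bigcap_{k=0}^{\tau-1}\{\|\vec e_k\|^2_{W^\ell} \geq \epsilon^2\} \cap \{\|\vec e_\tau\|^2_{W^\ell} < \epsilon^2\}$ and apply a union bound over the $\tau+1$ indices, each of which is controlled by concentration of $\|\vec e_k\|^2_{W^\ell}$ around $\mathfrak e^2_{\ell,k,d}$ at $\delta$-distance from $\epsilon^2$. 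The paper instead first decomposes $\{t^{(\ell)}_\epsilon \neq \tau\}$ into $\{t<\tau\}$ and $\{t>\tau\}$, and then handles $\{t<\tau\}$ by conditioning on a monotonicity event $\mathcal M_k$: on $\mathcal M_k$ only the index $\tau-1$ need be controlled, and on $\mathcal M_k^c$ one must separately show $\mathbb P(\mathcal M_k^c) \to 0$, which for $\ell=2$ is done via a sandwich event $\mathcal S_k$ that again amounts to a union bound over $j=1,\ldots,\tau-1$. The advantage of the paper's detour is that for $\ell=1$ one gets $\mathbb P(\mathcal M_k^c)=0$ for free from the deterministic monotonicity of $\|\vec e_k\|_W$ in CGA, so only two indices need controlling; your version controls all $\tau+1$ indices regardless, which costs nothing asymptotically since $\tau$ is fixed. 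In the boundary case $\epsilon = \mathfrak e_{\ell,\tau,d}$ your handling — controlling $k \in \{0,\ldots,\tau-1,\tau+1\}$ and simply dropping the uncontrollable index $\tau$ — is exactly the right idea and matches the paper's sketch. Two small points worth making explicit if you write this out in full: (i) the strict monotonicity of $k \mapsto \mathfrak e^2_{\ell,k,d}$ does come from the explicit formulas $d^k/(1-d)$ and $d^k$ and so genuinely requires $\ell \in \{1,2\}$; (ii) when invoking Theorem~\ref{t:estimates}(2) you need, as the paper notes via Lemma~\ref{l:powers}, that $\int \lambda^{\ell-2}\det(\mathbb T_{k,d}-\lambda I)^2\,\mathbb E\mu_{n,\beta,d}(\D\lambda) \to \mathfrak e^2_{\ell,k,d}$ so that for $n$ large enough the two centers differ by at most $\delta/2$.
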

\begin{proof} 
Assume $\epsilon \neq \mathfrak e_{\ell,k,d}$ for $k = 1,2,\ldots$.  Then $\delta = \min_{k}|\epsilon^2 - \mathfrak e_{\ell,k,d}^2| > 0$ as $\mathfrak e_{\ell,k,d} \to 0$ as $k \to \infty$.  Note that if $\kappa = \tau^{(\ell)}_{\epsilon}(\beta,d)$ then 
\begin{align*}
    \mathfrak e_{\ell,\kappa-1,d}^2 > \epsilon^2  > \mathfrak e_{\ell,\kappa,d}^2.
\end{align*}
Then as $\delta \leq |\epsilon^2 - \mathfrak e_{\ell,k-1,d}^2| = \mathfrak e_{\ell,k-1,d}^2 - \epsilon^2$
\begin{align*}
    \mathbb P \left( t^{(\ell)}_{\epsilon} \neq \tau^{(\ell)}_{\epsilon}(\beta,d)  \right) \leq \mathbb P \left( t^{(\ell)}_{\epsilon} < \tau^{(\ell)}_{\epsilon}(\beta,d)  \right) + \mathbb P \left( t^{(\ell)}_{\epsilon} > \tau^{(\ell)}_{\epsilon}(\beta,d)  \right)
\end{align*}
We estimate these two terms individually. First, {let $\mathcal M_k$ be the event where $\|\vec e_{j}\|_{W^\ell}$ is weakly decreasing for $j = 0,1,2,\ldots,k-1$.  Then
\begin{align*}
    \mathbb P \left( t^{(\ell)}_{\epsilon} < \tau^{(\ell)}_{\epsilon}(\beta,d)  \right) &= \mathbb P \left( \|\vec e_{k-1}\|_{W^\ell}^2 < \epsilon^2, k = \tau^{(\ell)}_{\epsilon}(\beta,d), \mathcal M_k \right)  + \mathbb P \left( t^{(\ell)}_{\epsilon} < \tau^{(\ell)}_{\epsilon}(\beta,d), \mathcal M_k^c  \right)  \\
    &\leq \mathbb P \left( \|\vec e_{k-1}\|_{W^\ell}^2 < \mathfrak e_{\ell,k-1,d}^2 - \delta, k = \tau^{(\ell)}_{\epsilon}(\beta,d)  \right)+ \mathbb P (\mathcal M_k^c).
\end{align*}}
For sufficiently large $n$, by Lemma~\ref{l:powers}, 
\begin{align*}
    \left| \mathfrak e_{\ell,k-1,d}^2 - \int \lambda^{\ell-2} \det (\mathbb T_{k-1,d} - \lambda I)^2 \mathbb E \mu_{n,\beta,d}(\D \lambda) \right| \leq \frac{\delta}{2}
\end{align*}
and for such a value of $n$
\begin{align*}
    &\mathbb P \left( \|\vec e_{k-1}\|_{W^\ell}^2 < \mathfrak e_{\ell,k-1,d}^2 - \delta, k = \tau^{(\ell)}_{\epsilon}(\beta,d)  \right) \\&\leq \mathbb P \left( \|\vec e_{k-1}\|_{W^\ell}^2 < \int \lambda^{\ell-2} \det (\mathbb T_{k-1,d} - \lambda I)^2 \mathbb E \mu_{n,\beta,d}(\D \lambda) - \frac{\delta}{2}, k = \tau^{(\ell)}_{\epsilon}(\beta,d)  \right) \to 0
\end{align*}
by Theorem~\ref{t:estimates}(2). {It remains to show that $\mathbb P(\mathcal M_k^c) \to 0$.  For $\ell = 1$ this is immediate because $P(\mathcal M_k^c)  = 0$.  For $\ell = 2$, consider the event $\mathcal S_k = \{ d^{j+1/2} <  \|\vec e_j\|^2_{W^2} < d^{j-1/2},~~ j = 1,2,\ldots,k-1  \}$. Then $\mathbb P(\mathcal M_k^c) \leq \mathbb P(\mathcal S_k^c)$ and
\begin{align*}
    \mathbb P(\mathcal S_k^c) \leq \sum_{j=1}^{\tau^{(2)}_{\epsilon}(\beta,d)-1} \left[\mathbb P ( \|\vec e_j\|^2_{W^2} \geq d^{j-1/2} ) + \mathbb P ( \|\vec e_j\|^2_{W^2} \leq d^{j+1/2} ) \right].
\end{align*}
And this tends to zero by Theorem~\ref{t:estimates}(2).}
The estimate for $\mathbb P \left( t^{(\ell)}_{\epsilon} > \tau^{(\ell)}_{\epsilon}(\beta,d) \right)$ is analogous, but now we do not need monotonicity for $\|\vec e_j\|_{W^\ell}$ and 
\begin{align*}
    \mathbb P \left( t_\epsilon^{(\ell)} (W_{n,\beta,d}, \vec b) > \tau_{\epsilon}^{(\ell)}(\beta,d) \right) &= \mathbb P\left( \|\vec e_k\|^2_{W^{\ell}} \geq \epsilon^2, k = \tau_{\epsilon}^{(\ell)}(\beta,d) \right)\\
    & \leq \mathbb P \left( \| \vec e_k\|^2_{W^{\ell}} \geq \mathfrak e_{\ell,k,d}^2 + \delta, k = \tau_{\epsilon}^{(\ell)}(\beta,d)\right)
\end{align*}
as $\delta \leq |\epsilon^2 - \mathfrak e^2_{\ell,k,d}| = \epsilon^2 - \mathfrak e_{\ell,k,d}^2$.  When $\epsilon = \mathfrak e_{\ell,k,d}$, we use similar arguments to show that
\begin{align*}
\lim_{n \to \infty} \mathbb P \left(t_\epsilon^{(\ell)}(W_{n,\beta,d}, \vec b) < \tau_{\epsilon}^{(\ell)}(\beta,d)  \right) = \lim_{n \to \infty} \mathbb P \left(t_\epsilon^{(\ell)}(W_{n,\beta,d}, b) > \tau_{\epsilon}^{(\ell)}(\beta,d) + 1  \right) = 0.
\end{align*}
As $t_{\epsilon}^{(\ell)}(W,\vec b)$ and $\tau_{\epsilon}^{(\ell)}(\beta,d)$ are integers, \eqref{eq:2vals} follows.
\end{proof}
\begin{remark}
We conjecture that if $\epsilon = \mathfrak e_{\ell,k,d}$, i.e $k = \tau^{(\ell)}(\beta,d)$, for $k > 0$ then
\begin{align*}
    \lim_{n \to \infty} \mathbb P \left( t^{(\ell)}_{\epsilon}(W_{n,\beta,d},\vec b) = \tau^{(\ell)}_{\epsilon}(\beta,d) \right) = \frac 1 2 =  \lim_{n \to \infty} \mathbb P \left( t^{(\ell)}_{\epsilon}(W_{n,\beta,d},\vec b) = \tau^{(\ell)}_{\epsilon}(\beta,d) + 1 \right).
\end{align*}
Indeed Figure~\ref{f:clt} indicates this is true because
\begin{align*}
    \|\vec r_k\|_{2}^2 - d^{k}
\end{align*}
appears to be asymptotically normal with a variance that decays like $1/n$.  We note that this is related to, but not a consequence of, the central limit theorem for linear spectral statistics (CLT for LSS).  For the CLT for LSS the variance decays as $1/n^2$. In the case at hand, the fluctuations that occur in the random weights (see $\omega_j$ in \eqref{eq:nu}) and the fluctuations that occur in the random polynomial $p_k^\dagger$ contribute to the variance on the order of $1/n$.  This conjecture will be resolved in a forthcoming publication.
\end{remark}

\begin{figure}[tbp]
\begin{overpic}[width=.8\linewidth]{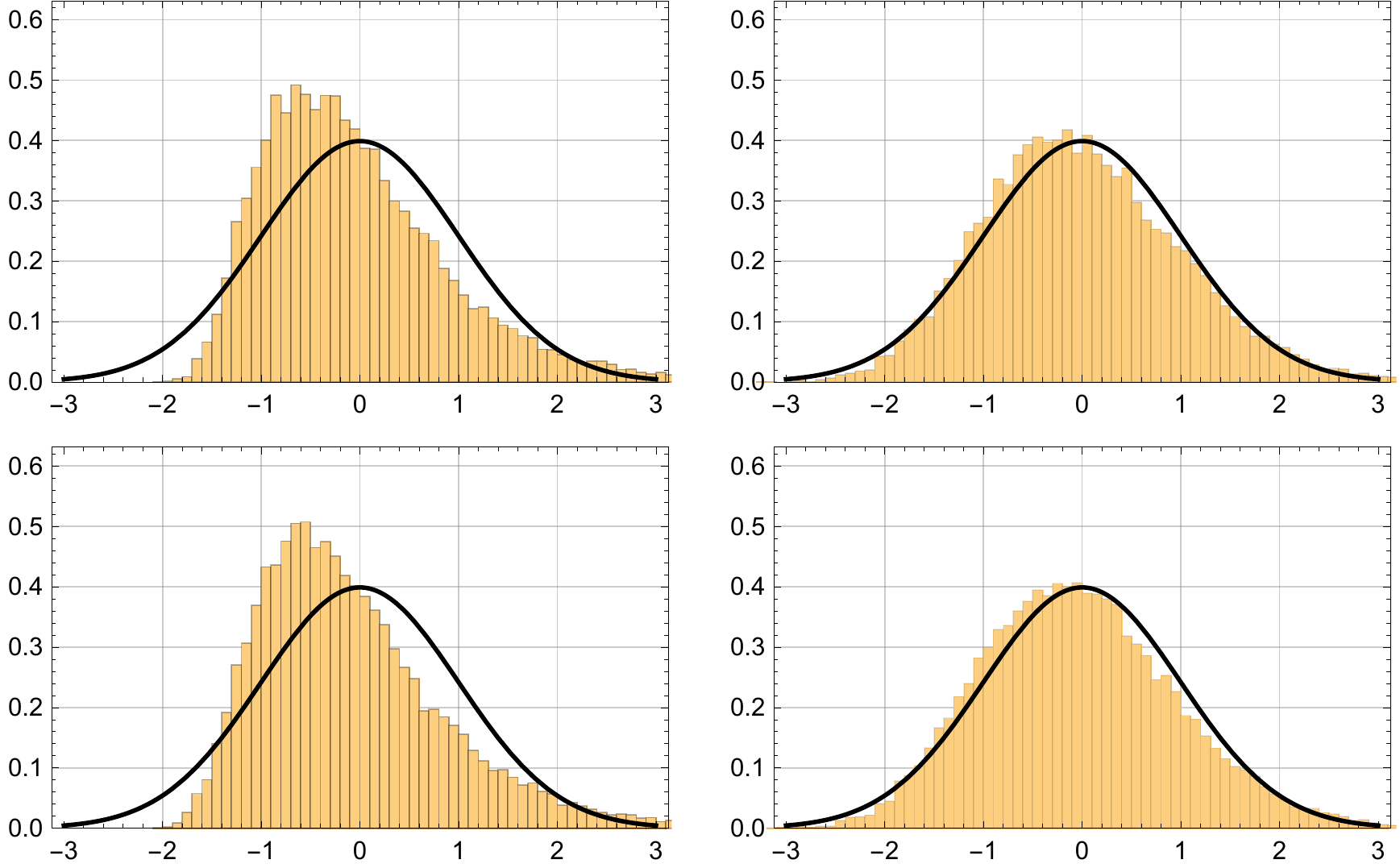}
\put(38,25){BE}
\put(34,21){$n = 200$}
\put(89.5,25){BE}
\put(85,21){$n = 2000$}

\put(38,57){$\beta=1$}
\put(34,53){$n = 200$}
\put(89.5,57){$\beta=1$}
\put(85,53){$n = 2000$}

\put(33,-4){Normalized fluctuations}

\put(-4,20){\rotatebox{90}{Relative frequency}}
\end{overpic}
\vspace{.1in}
\caption{\label{f:clt} A demonstration that $g_k : = \|\vec r_k\|_{2}^2 - d^{k}$ is appears asymptotically normal as $n$ increases.  The top row demonstrates this for case $\beta = 1$ and the bottom row demonstrates this for the case where $X_{11} = \pm 1$ with equal probability ($X$ still has iid entries), \emph{i.e.} the Bernoulli Ensemble. Specifically, we plot histograms for $g_k/(\langle g_k^2 \rangle)^{1/2}$ against a standard Gaussian density (black). }
\end{figure}
\section{Technical results from random matrix theory}\label{sec:tech}

\begin{lemma}\label{l:chibound}
Let $\chi_k$ be a chi distributed random variable with $k \geq 1$ degrees of freedom.  Then for any fixed  integers $p$ and $q > 0$ there exists $C_{q,p} > 0$ such that
\begin{align*}
    \mathbb E \left| \left( \frac{\chi_k}{\sqrt{k} } \right)^{p} - 1 \right|^{2q} \leq C_{q,p} k^{-q}, \quad k > -2pq.
\end{align*}
Furthermore, for $t \geq 0$
\begin{align*}
    \mathbb P\left(  \frac{\chi_k}{\sqrt{k}} - 1  \geq t \right) \leq  \frac{1 + O(k^{-1})}{\sqrt{\pi}} \int_{t\sqrt{k}}^\infty \E^{-x^2/2} \D x.
\end{align*}
For $ t \leq 0$,
\begin{align*}
    \mathbb P\left(  \frac{\chi_k}{\sqrt{k}} - 1  \leq t \right) \leq  \frac{\E^{1/2} + O(k^{-1})}{\sqrt{\pi}} \int_{-\infty}^{t\sqrt{k-1}}  \E^{-x^2/2} \D x.
\end{align*}
\end{lemma}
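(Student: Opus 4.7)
The plan is to treat the three claims in order. The common ingredients are the closed-form chi moments $\mathbb E[\chi_k^r] = 2^{r/2}\Gamma((k+r)/2)/\Gamma(k/2)$, finite when $k+r>0$, together with the Stirling expansion $\Gamma(k/2) = \sqrt{2\pi}\,(k/2)^{(k-1)/2} e^{-k/2}(1+O(1/k))$.

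\textbf{Moment bound (Part 1).} Since $2q$ is an even integer I would expand via the binomial theorem,
\begin{equation*}
\mathbb E\left|\left(\frac{\chi_k}{\sqrt k}\right)^{p}-1\right|^{2q} = \sum_{j=0}^{2q}\binom{2q}{j}(-1)^{2q-j}\,\mathbb E\!\left[\left(\frac{\chi_k}{\sqrt k}\right)^{pj}\right].
\end{equation*}
The most negative power of $\chi_k$ appearing is $\chi_k^{2pq}$, whose expectation is finite iff $k>-2pq$; this is precisely the stated hypothesis. Stirling yields a full asymptotic expansion $\mathbb E[(\chi_k/\sqrt k)^r] = 1 + \sum_{\ell \geq 1} a_\ell(r) k^{-\ell}$, in which each $a_\ell$ is a polynomial in $r$ of degree $2\ell$. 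Substituting $r=pj$ and exchanging the sums, the coefficient of $k^{-\ell}$ becomes the $2q$-th forward difference at $0$ of a polynomial in $j$ of degree at most $2\ell$; this difference vanishes identically whenever $\ell<q$. Hence the leading $q-1$ contributions cancel and what remains is $O(k^{-q})$, proving Part 1.

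\textbf{Tail bounds (Parts 2 and 3).} Writing $f_k(x) = x^{k-1}e^{-x^2/2}/(2^{k/2-1}\Gamma(k/2))$ for $x\geq 0$ and shifting $x=\sqrt k+y$, the identities $(\sqrt k+y)^{k-1} = \sqrt{k}^{\,k-1}(1+y/\sqrt k)^{k-1}$ and $(\sqrt k+y)^2 = k + 2y\sqrt k + y^2$, combined with the elementary inequality $\log(1+z)\leq z$ valid for $z>-1$, give the pointwise bound
\begin{equation*}
f_k(\sqrt k + y) \;\leq\; \frac{\sqrt{k}^{\,k-1}\,e^{-k/2}}{2^{k/2-1}\Gamma(k/2)}\,e^{-y^2/2 - y/\sqrt k},\qquad y>-\sqrt k.
\end{equation*}
Stirling evaluates the prefactor as $(1+O(1/k))/\sqrt\pi$. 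For the upper tail ($t\geq 0$) the factor $e^{-y/\sqrt k}$ is at most $1$ on $\{y\geq t\sqrt k\}$, and Part 2 follows by integration.

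For the lower tail ($t\leq 0$) the factor $e^{-y/\sqrt k}$ is now $\geq 1$; I would complete the square, $-y^2/2-y/\sqrt k = -(y+1/\sqrt k)^2/2 + 1/(2k)$, shift by $1/\sqrt k$ to extend the lower endpoint to $-\infty$, and compare the resulting upper limit $t\sqrt k + 1/\sqrt k$ with the target $t\sqrt{k-1}$. Their difference is $O(1/\sqrt k)$ for $|t|\leq 1$, and a Mills-ratio-type comparison of $\int_{-\infty}^{t\sqrt k+1/\sqrt k}e^{-x^2/2}\,dx$ with $\int_{-\infty}^{t\sqrt{k-1}}e^{-x^2/2}\,dx$ contributes a bounded multiplicative factor that, together with $e^{1/(2k)}$, is absorbed into the constant $e^{1/2}/\sqrt\pi$, giving Part 3.

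\textbf{Main obstacle.} The principal difficulty is bookkeeping: in Part 1, the systematic cancellation of the lower-order terms via finite-difference identities must be verified for every $\ell<q$, which requires identifying the degree of each Stirling coefficient $a_\ell(r)$; in Parts 2 and 3 the delicate point is the explicit tracking of the constants $1/\sqrt\pi$ and $e^{1/2}/\sqrt\pi$ through the Stirling expansion, together with the passage from $t\sqrt k$ to $t\sqrt{k-1}$ in the lower tail.
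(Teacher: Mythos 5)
Your Part 1 is correct but follows a genuinely different route from the paper. The paper changes variables to $y = x/\sqrt{k}$ and applies Laplace's method to $\int_0^\infty f(y)\,y^{k-1}e^{-ky^2/2}\,\mathrm{d}y$ with $f(y) = |y^p-1|^{2q}$; since $f$ vanishes to order $2q$ at $y=1$, the first nonvanishing Laplace term is of order $k^{-q}$. Your route instead expands $\mathbb{E}|(\chi_k/\sqrt{k})^p - 1|^{2q}$ by the binomial theorem, uses the exact formula $\mathbb{E}[\chi_k^r] = 2^{r/2}\Gamma((k+r)/2)/\Gamma(k/2)$, and observes that the Stirling coefficient $a_\ell(r)$ is a polynomial of degree $2\ell$, so that the $2q$-th forward difference in $j$ kills every term with $\ell < q$. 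This is a clean alternative; it replaces the asymptotic steepest-descent analysis with an algebraic finite-difference cancellation, which some readers may find easier to check. Your Part 2 is essentially the paper's argument: both use $\log(1+z)\leq z$ (equivalently the paper's inequality $\ell(y+1)\geq y^2/2$) to dominate the density by a Gaussian, and both discard the residual factor $e^{-y/\sqrt{k}} \leq 1$ on $\{y\geq 0\}$.

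Your Part 3, however, has a real gap. You write $-y^2/2-y/\sqrt{k}=-(y+1/\sqrt{k})^2/2+1/(2k)$, extend the lower limit to $-\infty$, and then propose to convert the upper limit $t\sqrt{k}+1/\sqrt{k}$ into the stated $t\sqrt{k-1}$ by "a Mills-ratio-type comparison" that "contributes a bounded multiplicative factor that ... is absorbed into the constant $e^{1/2}/\sqrt{\pi}$." This last step is not a harmless absorption: for $t\in[-1,0]$ one has $t\sqrt{k}+1/\sqrt{k}>t\sqrt{k-1}$, and the ratio of Gaussian tail integrals
\begin{equation*}
\frac{\displaystyle\int_{-\infty}^{t\sqrt{k}+1/\sqrt{k}}e^{-z^2/2}\,\mathrm{d}z}{\displaystyle\int_{-\infty}^{t\sqrt{k-1}}e^{-x^2/2}\,\mathrm{d}x}
\end{equation*}
tends to \emph{exactly} $e^{1/2}$ as $k\to\infty$ at $t=-1$ (a quick Mills-ratio computation gives $(|a|/|b|)\,e^{(a^2-b^2)/2}=\sqrt{k/(k-1)}\,e^{1/2-1/(2k)}\to e^{1/2}$). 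There is no spare room: you would have to prove, uniformly over $t\in[-1,0]$, that $e^{1/(2k)}$ times this ratio stays below $e^{1/2}(1+O(1/k))$, with separate treatment of the regimes $|t|\sqrt{k}=O(1)$ (where the Mills ratio expansion is invalid) and $|t|\sqrt{k}\to\infty$. The paper sidesteps all of this by a different factorization: writing the integrand as $(y+1)^{-1}e^{-k\ell(y+1)}$ with $\ell(u)=u^2/2-\log u-1/2$, it peels off one factor of $e^{-\ell(y+1)}$, which cancels $(y+1)^{-1}$ exactly and contributes $e^{-(y+1)^2/2+1/2}\leq e^{1/2}$, leaving $e^{-(k-1)\ell(y+1)}\leq e^{-(k-1)y^2/2}$. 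The $\sqrt{k-1}$ then appears directly from the change of variables, and no tail-integral comparison is needed. You should adopt this factorization (or carry out the uniform Mills-ratio estimate in full) to close Part 3.
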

\begin{proof}
Because $\chi_k$ has a density given by
\begin{align*}
    \frac{x^{k-1}\E^{-x^2/2} }{2^{k/2-1} \Gamma(k/2)}
\end{align*}
we are led to analyze
\begin{align*}
\frac{1}{{2^{k/2-1} \Gamma(k/2)}} \int_0^\infty \left| \left( \frac{x}{\sqrt{k} } \right)^{p} - 1 \right|^{2q} x^{k-1}\E^{-x^2/2} \D x.
\end{align*}
The result then follows by the change of variables $x = \sqrt{k} y$ and applying the method of steepest descent (Laplace's method, see e.g., \cite[Lemma 6.2.3]{FokasComplexVariables}) for integrals along with Stirling's approximation.  Indeed, suppose $f :\mathbb (0,\infty) \to \mathbb R $ is smooth and satisfies the bound $|f(y)| \leq C (y^{-K} + y^L)$ for $K,L, C > 0$.  Then for $k > K$ we must estimate
\begin{align*}
    \int_0^\infty f(y) y^{k - 1} \E^{-k y^2/2} \D y &= \E^{-k/2} \int_0^\infty f(y) y^{- 1} \E^{-k \ell(y)} \D y, 
    \\ \ell(y) &= y^2/2 - \log y - 1/2.
\end{align*}
Note that $\ell(y)$ has a global minimum of zero at $y = 1$ on $(0,\infty)$.  For $0 < \epsilon < 1$, write
\begin{align*}
    \int_0^{1-\epsilon} f(y) y^{- 1} \E^{-k \ell(y)} \D y & = \int_0^{1-\epsilon} f(y) y^{- 1} \E^{-(k - K - 1) \ell(y)} \E^{-(K + 1) \ell(y)} \D y\\
    & \leq \E^{-(k - K - 1) \ell(1-\epsilon)} \int_0^{\infty} f(y) y^{- 1}  \E^{-(K + 1) \ell(y)} \D y,
\end{align*}
which decays exponentially to zero as $\ell(1-\epsilon) > 0$.  The same calculation on $[1 + \epsilon, \infty)$ gives
\begin{align*}
    \int_{1+\epsilon}^\infty f(y) y^{- 1} \E^{-k \ell(y)} \D y  \leq \E^{-(k - K - 1) \ell(1+\epsilon)} \int_0^{\infty} f(y) y^{- 1}  \E^{-(K + 1) \ell(y)} \D y,
\end{align*}
which, again, decays exponentially because $\ell(1 + \epsilon) > 0$.  Laplace's method gives
\begin{align*}
    \int_0^\infty f(y) y^{k - 1} \E^{-k y^2/2} \D y = \E^{-k/2}\left( f(1)  \sqrt{\frac{\pi}{k }} + O(k^{-3/2})\right).
\end{align*}
As remarked, Stirling's approximation then gives the first inequality in the lemma.
We note that
\begin{align}\label{eq:ineqy}
    \ell(y+1) = \frac{(y+1)^2}{2} - \log (y+1)  -1/2 \geq \frac{y^2}{2}, \quad y \geq -1.
\end{align}
For the second inequality then uses that \eqref{eq:ineqy}
\begin{align*}
    \mathbb P\left(  \frac{\chi_k}{\sqrt{k}} - 1  \geq t \right) &= \frac{k^{k/2}}{2^{k/2-1}\Gamma(k/2)} \int_{t+1}^\infty x^{k-1} \E^{-k x^2/2} \D x\\
    &= \frac{k^{k/2}}{2^{k/2-1}\Gamma(k/2)} \int_{t}^\infty (y+1)^{k-1} \E^{-k (y+1)^2/2} \D x\\
    & = \frac{k^{k/2}\E^{-k/2}}{2^{k/2-1}\Gamma(k/2)} \int_{t}^\infty (y+1)^{-1} \E^{-k \ell(y+1)} \D y\\
    & \leq \frac{k^{k/2}\E^{-k/2}}{2^{k/2-1}\Gamma(k/2)} \int_{t}^\infty \E^{-k y^2/2} \D y.
\end{align*}
The last follows from \eqref{eq:ineqy} and a similar calculation
\begin{align*}
    \mathbb P\left(  \frac{\chi_k}{\sqrt{k}} - 1  \leq t \right) & = \frac{k^{k/2}\E^{-k/2}}{2^{k/2-1}\Gamma(k/2)} \int_{-1}^t \E^{-(y+1)^2/2 + 1/2} \E^{-(k-1) \ell(y+1)} \D y\\
    & \leq \E^{1/2}\frac{k^{k/2}\E^{-k/2}}{2^{k/2-1}\Gamma(k/2)} \int_{-1}^t \E^{-(k-1) \ell(y+1)} \D y\\
    & \leq \E^{1/2}\frac{k^{k/2}\E^{-k/2}}{2^{k/2-1}\Gamma(k/2)} \int_{-\infty}^t \E^{-(k-1) y^2/2} \D y.
\end{align*}
\end{proof}

\begin{definition}
  A mean-zero random variable $X$ is called sub-exponential with parameters $\nu,\alpha > 0$ if $\mathbb E \left[\E^{\lambda x}\right]\leq \E^{\lambda^2 \nu^2/2}$ for $|\lambda| < \frac{1}{\alpha}$.
\end{definition}
It then follows that centered chi variables are sub-exponential. If $X$ is sub-exponential, then clearly $\mathbb E \left[ \E^{|X|/t} \right] < 2$ for some $ t > 0$.  

A good reference for the next classical result is \cite[Section 2.8]{Vershynin2018}.
\begin{theorem}[Bernstein's inequality for sub-exponential random variables]
Let $(X_i)_{i \geq 1}$ be a sequence of independent mean zero random variables and define
\begin{align*}
    \|X_i\|_{\psi_1} := \inf\{t > 0: \mathbb E \exp(|X|/t) \leq 2\}.
\end{align*}
Then for $t \geq 0$ and any real numbers $a_1,\ldots,a_n$
\begin{align*}
    \mathbb P \left(  \left|\sum_{j=1}^n a_i X_i \right| \geq t \right) \leq 2 \exp \left( - c \min \left\{ \frac{t^2}{K^2\|\vec a\|_2^2}, \frac{t}{K \|\vec a\|_\infty} \right\} \right),
\end{align*}
and $K = \max_{1 \leq i \neq n} \|X_i\|_{\psi_1}$.  Here $c > 0$ is some absolute constant independent of $\{X_i\}, \{a_j\}$.
\end{theorem}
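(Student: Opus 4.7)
The plan is to execute the canonical Chernoff/MGF argument that underlies all Bernstein-type inequalities, in three steps.

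\textbf{Step one} is to convert the $\psi_1$-norm bound into a centered MGF estimate: there exist absolute constants $c_1, c_2 > 0$ such that if $\mathbb{E} X = 0$ and $\|X\|_{\psi_1}\leq K$, then
\[
\mathbb{E}\bigl[\E^{\lambda X}\bigr] \leq \exp\bigl(c_1 K^2 \lambda^2\bigr), \qquad |\lambda|\leq c_2/K.
\]
This follows by Taylor-expanding $\E^{\lambda X} - 1 - \lambda X$, inserting the moment bound $\mathbb{E}|X|^p \leq C^p p!\, K^p$ (one of the standard equivalent characterizations of sub-exponentiality), and summing the resulting geometric series in the regime $C|\lambda|K < 1$.

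\textbf{Step two} invokes independence to factor the MGF of $S := \sum_{i=1}^n a_i X_i$:
\[
\mathbb{E}\bigl[\E^{\lambda S}\bigr] = \prod_{i=1}^n \mathbb{E}\bigl[\E^{\lambda a_i X_i}\bigr] \leq \exp\Bigl(c_1 K^2 \lambda^2 \|\vec a\|_2^2\Bigr),
\]
valid whenever $|\lambda|\, \|\vec a\|_\infty \leq c_2/K$. The Chernoff step $\mathbb{P}(S \geq t) \leq \E^{-\lambda t}\mathbb{E}[\E^{\lambda S}]$ then yields $\mathbb{P}(S \geq t) \leq \exp(-\lambda t + c_1 K^2 \lambda^2 \|\vec a\|_2^2)$, and repeating the bound with $-S$ handles the lower tail, giving the factor of $2$ in the final inequality. \textbf{Step three} is to optimize $\lambda$ over the admissible interval $[0, c_2/(K\|\vec a\|_\infty)]$. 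The unconstrained optimizer is $\lambda^\star = t/(2 c_1 K^2 \|\vec a\|_2^2)$; if $\lambda^\star$ is admissible, one obtains the sub-Gaussian tail $\exp(-c' t^2/(K^2\|\vec a\|_2^2))$, while otherwise taking $\lambda = c_2/(K\|\vec a\|_\infty)$ gives the sub-exponential tail $\exp(-c'' t/(K\|\vec a\|_\infty))$. Combining both regimes produces the stated $\min$-type bound.

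The main obstacle is really only step one: quantitatively transferring the Orlicz ($\psi_1$) formulation of sub-exponentiality into an MGF bound with clean constants, which requires the full equivalence between tail, moment, and MGF characterizations of sub-exponential random variables. This is precisely the reason the paper cites \cite{Vershynin2018} rather than reproducing the argument from scratch; once that equivalence is in hand, the independence factorization and the Chernoff optimization are mechanical, and the characteristic two-regime ($t^2$ versus $t$) structure of the bound emerges automatically from the upper constraint $|\lambda|\leq c_2/(K\|\vec a\|_\infty)$ on the admissible range of $\lambda$.
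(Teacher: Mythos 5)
The paper does not actually prove this theorem; it simply states it and refers to \cite[Section 2.8]{Vershynin2018} for the classical argument. Your Chernoff--MGF proof is correct and is essentially the standard one given in that reference: the $\psi_1$ bound implies a sub-Gaussian-looking MGF estimate $\mathbb{E}[\E^{\lambda X}]\leq \exp(c_1 K^2\lambda^2)$ on the restricted interval $|\lambda|\leq c_2/K$, independence factors the MGF of the weighted sum with the constraint $|\lambda|\,\|\vec a\|_\infty\leq c_2/K$, and constrained optimization over $\lambda$ produces exactly the two-regime $\min\{t^2/(K^2\|\vec a\|_2^2),\,t/(K\|\vec a\|_\infty)\}$ tail. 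Nothing is missing: the only technical work is the moment-to-MGF transfer in your Step one, and you have correctly identified the standard route (Taylor expansion plus the moment characterization $\mathbb{E}|X|^p\leq C^p p!\,K^p$ of sub-exponentiality) for carrying it out.
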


The estimate
\begin{align}\label{eq:chi-est}
    \mathbb P \left( \left| \frac{\chi_{\beta n}^2}{\beta n} - 1 \right| \geq t \right) \leq 2 \exp \left ( - c n\min \left\{ \frac{ \beta^2 t^2 }{K^2}, \frac{\beta t}{K} \right\} \right),
\end{align}
can be found by estimating the density for a $\chi_{\beta n}^2$ random variable or by applying Bernstein's inequality (Recall that a sum of $n$ independent chi-square variables $\chi_{\sigma_i}^2$, $i = 1,\ldots,n$ is again a chi-square variable $\chi_\sigma^2$ with $\sigma = \sum_i \sigma_i$). We will use three elementary facts that are encapsulated in the following lemma.
\begin{lemma}\label{l:3facts}
Let $Z_1,Z_2,Y$ be random variables and assume $\mathbb P( Y = 0 ) = 0$.  The following inequalities hold
\begin{enumerate}
    \item $\displaystyle \mathbb P \left( \frac{|Z_1|}{|Y|} \geq t \right) \leq \mathbb P \left( |Z_1| \left[ \frac{1}{|Y|} -\frac{1}{\mu} \right]_+ + \frac{|Z_1|}{\mu} \geq t \right)$
    where $[\cdot]_+$ denotes the positive part and $\mu > 0$,\\
    \item $\displaystyle \mathbb P(|Z_1| + |Z_2| \geq t) \leq \mathbb P(|Z_1| \geq t/p) + \mathbb P(|Z_2| \geq t/q)$, ~~ $1/p + 1/q = 1$ and\\
    \item $\displaystyle \mathbb P(|Z_1||Z_2| \geq t) \leq \mathbb P(|Z_1| \geq t^{1/2}) + \mathbb P(|Z_2| \geq t^{1/2})$.
\end{enumerate}
\end{lemma}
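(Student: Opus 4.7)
The lemma packages three deterministic event inclusions, and my plan is to verify each by comparing events pointwise on the underlying probability space, then take probabilities.

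For (1), the plan is to use the decomposition $\frac{1}{|Y|} = \bigl[\frac{1}{|Y|} - \frac{1}{\mu}\bigr]_+ + \min\bigl(\frac{1}{|Y|}, \frac{1}{\mu}\bigr)$. The second summand is bounded above by $\frac{1}{\mu}$, so $\frac{1}{|Y|} \leq \bigl[\frac{1}{|Y|} - \frac{1}{\mu}\bigr]_+ + \frac{1}{\mu}$. Multiplying by the non-negative quantity $|Z_1|$ gives a pointwise inequality, so the event $\{|Z_1|/|Y| \geq t\}$ is a subset of the event on the right-hand side, and monotonicity of $\mathbb P$ concludes. The assumption $\mathbb P(Y=0) = 0$ just ensures everything is defined almost surely.

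For (2), the plan is the simple observation that if $|Z_1| < t/p$ and $|Z_2| < t/q$ simultaneously, then $|Z_1|+|Z_2| < t/p + t/q = t$ whenever $1/p + 1/q = 1$. Hence $\{|Z_1|+|Z_2| \geq t\} \subseteq \{|Z_1| \geq t/p\} \cup \{|Z_2| \geq t/q\}$, and subadditivity of $\mathbb P$ gives the claim. Item (3) is analogous: if $|Z_1| < t^{1/2}$ and $|Z_2| < t^{1/2}$ then $|Z_1||Z_2| < t$, so $\{|Z_1||Z_2| \geq t\} \subseteq \{|Z_1| \geq t^{1/2}\} \cup \{|Z_2| \geq t^{1/2}\}$, and again a union bound finishes.

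Since all three steps are purely set-theoretic with a single application of Boole's inequality, I do not anticipate any genuine obstacle; the only subtlety is the positive-part decomposition in (1), which one should state carefully enough that the inequality is seen to hold for every realization of $Y$ with $Y \neq 0$.
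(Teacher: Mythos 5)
The paper states these three inequalities without proof (they are described only as ``elementary facts''), so there is no paper argument to compare against. Your proof is correct and is the obvious one: the positive-part decomposition in (1) gives a pointwise bound once multiplied by $|Z_1|\geq 0$, and (2), (3) are straightforward event inclusions followed by Boole's inequality, as you say.
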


\begin{lemma}\label{l:KSdist}
Suppose $-\infty < \lambda_{1,n} < \lambda_{2,n} < \cdots \lambda_{n,n} < \infty$.  Let $(\chi_{\beta}^{(j)})_{j \geq 1}$ be independent chi-distributed random variables with $\beta$ degrees of freedom.  Define weights
\begin{align*}
    \omega_j = \frac{(\chi_\beta^{(j)})^2}{\sum_k (\chi_\beta^{(k)})^2}.
\end{align*}
Then the Kolmogorov--Smirnov distance 
\begin{align*}
d_{\mathrm{KS}}(\mu,\nu):=\sup_{x \in \mathbb R} |\mu((-\infty,x]) - \nu((-\infty,x])|
\end{align*}
of
\begin{align*}
    \mu_n &= \sum_{j} \delta_{\lambda_{j, n}} \omega_j, \quad \nu_n = \frac{1}{n} \sum_j \delta_{\lambda_{j, n}},
\end{align*}
satisfies
\begin{align*}
   \mathbb E \left[ d_{\mathrm{KS}}(\mu_n,\nu_n) \right] &\leq C\frac{\log n}{\sqrt{n}},
\end{align*}
and the tail estimate
\begin{align}\label{eq:KStail}
    \mathbb P( d_{\mathrm{KS}}(\mu_n,\nu_n)  \geq  t) \leq  C_1 n e^{-c_1 n \beta t^2} +  C_2 n e^{-c_2 n \beta t}
\end{align}
for absolute constants $C,C_1,C_2,c_1,c_2>0$.
\end{lemma}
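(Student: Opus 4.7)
The plan is to express $d_{\mathrm{KS}}(\mu_n,\nu_n)$ as a maximum over partial sums of the weights $\omega_j$, recast each such partial sum as a ratio of independent chi-squared variables, apply the concentration estimate \eqref{eq:chi-est} together with Lemma~\ref{l:3facts}, and union-bound over $k$. Since both measures are atomic on the ordered support $\lambda_{1,n} < \cdots < \lambda_{n,n}$, the supremum defining $d_{\mathrm{KS}}$ is attained at the atoms, so
\begin{align*}
d_{\mathrm{KS}}(\mu_n,\nu_n) = \max_{1 \leq k \leq n-1} \left| \sum_{j=1}^k \omega_j - \frac{k}{n} \right|.
\end{align*}
Setting $A := \sum_{j=1}^k (\chi_\beta^{(j)})^2 \sim \chi_{\beta k}^2$ and $B := \sum_{j=k+1}^n (\chi_\beta^{(j)})^2 \sim \chi_{\beta(n-k)}^2$ (independent), the partial sum is $\sum_{j=1}^k \omega_j = A/(A+B)$.

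The central algebraic step is to introduce the centered variables $X := A/(\beta k) - 1$ and $Y := B/(\beta(n-k)) - 1$, so that
\begin{align*}
\frac{A}{A+B} - \frac{k}{n} = \frac{\beta k(n-k)(X-Y)}{n(A+B)}.
\end{align*}
On the good event $\mathcal G := \{A+B \geq n\beta/2\}$ (whose complement has probability at most $2e^{-c_0 n\beta}$ by \eqref{eq:chi-est}), this is bounded by $2k(n-k)|X-Y|/n^2 \leq \frac{1}{2}|X-Y|$, using $k(n-k) \leq n^2/4$. Applying Lemma~\ref{l:3facts}(2) to split $|X-Y| \leq |X|+|Y|$ and then \eqref{eq:chi-est} with threshold $s_k := t n^2/[4k(n-k)]$ gives
\begin{align*}
\mathbb P(|X| \geq s_k) \leq 2\exp\bigl(-c k\beta \min\{s_k^2, s_k\}\bigr),
\end{align*}
and identically for $|Y|$. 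The delicate point is the uniform-in-$k$ estimate
\begin{align*}
k s_k \geq \frac{tn}{4}, \qquad k s_k^2 = \frac{t^2 n^4}{16\, k(n-k)^2} \geq \frac{27\, n t^2}{64},
\end{align*}
where the second inequality uses $\max_{k} k(n-k)^2 = 4n^3/27$ at $k = n/3$. Combining yields
\begin{align*}
\mathbb P\!\left(\left|\sum_{j=1}^k \omega_j - \tfrac{k}{n}\right| \geq t\right) \leq C\exp\bigl(-c n\beta \min\{t^2, t\}\bigr).
\end{align*}

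A union bound over $1 \leq k \leq n-1$ then produces the tail estimate \eqref{eq:KStail}. For the expectation bound, write $\mathbb E[d_{\mathrm{KS}}] = \int_0^\infty \mathbb P(d_{\mathrm{KS}} \geq t)\,dt$ and split at $t^* := C_0 \sqrt{\log n/n}$ chosen so that $C_1 n e^{-c_1 n\beta t^{*2}} \leq 1$: the contribution below $t^*$ is at most $t^*$ (trivial bound $d_{\mathrm{KS}} \leq 1$), and the contribution above $t^*$ is $O(1/\sqrt{n \log n})$ by the standard Gaussian tail estimate, with an even smaller contribution from the linear-tail regime $t \geq 1$. Both contributions are $O(\sqrt{\log n/n}) = O(\log n/\sqrt n)$. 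The main technical obstacle is precisely the uniform-in-$k$ control in the middle paragraph: for $k$ near $1$ or $n-1$ the chi-square concentration of $X$ or $Y$ is weak (few degrees of freedom), and it is the vanishing prefactor $k(n-k)/n^2$ that exactly compensates, allowing the rate $\exp(-cn\beta\min\{t^2,t\})$ to survive the union bound.
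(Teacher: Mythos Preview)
Your proof is correct and shares the paper's overall strategy---reduce $d_{\mathrm{KS}}$ to partial sums $\sum_{j\leq k}\omega_j-k/n$, control each by chi-square concentration, and union bound---but the execution differs in three places. First, you split the numerator into two independent blocks $A\sim\chi_{\beta k}^2$, $B\sim\chi_{\beta(n-k)}^2$ and apply \eqref{eq:chi-est} to each, whereas the paper keeps the full centered sum $S_k=\frac{n-k}{n}A-\frac{k}{n}B$ intact and invokes Bernstein's inequality directly; the two are equivalent, since your uniform bounds $ks_k\geq tn/4$ and $ks_k^2\geq 27nt^2/64$ are exactly what Bernstein's $\|\vec a\|_2^2=k(n-k)/n\leq n/4$ encodes. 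Second, you dispose of the denominator via a single good event $\{A+B\geq n\beta/2\}$, while the paper controls $[n\beta/S-1]_+$ more precisely through the chi-square moment generating function and then combines numerator and denominator using all three parts of Lemma~\ref{l:3facts}. Third, for the expectation you integrate the tail bound with a cut at $t^*\asymp\sqrt{(\log n)/n}$, whereas the paper bounds moments to obtain a uniform sub-exponential norm $\|n^{1/2}|S_k/S|\|_{\psi_1}\leq C''$ and then applies the maximal inequality $\mathbb E[\max_k|X_k|]\leq(\log 2n)\max_k\|X_k\|_{\psi_1}$. Your route is slightly more elementary and self-contained; the paper's makes the $\log n$ factor appear in one stroke. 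One minor point: when you write ``identically for $|Y|$'' the degrees-of-freedom prefactor is $n-k$ rather than $k$, and the needed lower bounds $(n-k)s_k\geq tn/4$ and $(n-k)s_k^2\geq 27nt^2/64$ follow by the $k\leftrightarrow n-k$ symmetry you have already exploited.
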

Note that $ d_{\mathrm{KS}}(\mu_n,\nu_n) \leq 1$.
\begin{proof}
First, it follows that
\begin{align*}
    d_{\mathrm{KS}}(\mu_n,\nu_n) &= \max_{1 \leq k \leq n} \left| \sum_{j=1}^k \left( \omega_j - \frac{1}{n} \right) \right|\\
    & = \max_{1 \leq k \leq n} \frac{1}{\sum_\ell (\chi_\beta^{(\ell)})^2}\left| \sum_{j=1}^k \left( (\chi_\beta^{(j)})^2 - \frac{1}{n}\sum_i (\chi_\beta^{(i)})^2 \right) \right|\\
    & = \max_{1 \leq k \leq n} \frac{1}{\sum_\ell (\chi_\beta^{(\ell)})^2}\left| \sum_{j=1}^k \left( \frac{n-k}{n}   \right)(\chi_\beta^{(j)})^2 - \sum_{j=k+1}^n \frac{k}{n} (\chi_\beta^{(j)})^2  \right|.
\end{align*}
So, we are led to analyze the sums
\begin{align*}
    S_k &= \sum_{j=1}^k \left( \frac{n-k}{n}   \right)(\chi_\beta^{(j)})^2 - \sum_{j=k+1}^n \frac{k}{n} (\chi_\beta^{(j)})^2,\\
    S &= \sum_{j=1}^n (\chi_{\beta}^{(j)})^2.
\end{align*}
As $S_k$ has expected value zero,  Bernstein's inequality gives for $ t \geq 0$
\begin{align*}
    \mathbb P ( |S_k| \geq t ) &\leq 2 \exp\left( -c \min\left\{ \frac{4t^2}{K^2 n}, \frac{t}{K} \right\} \right)=:F(t).
\end{align*}
for absolute constants $c,K> 0$.  From the moment generating function for a chi-square distribution, we have, as $S$ is a chi-square random variable with $n \beta$ degrees of freedom,
\begin{align*}
    \mathbb P( S \leq t) \leq \min_{s > 0} \E^{st} (1 + 2s)^{-\frac{n\beta}{2}}.
\end{align*}
This minimum occurs at $s = \frac{n \beta}{2 t} - \frac 1 2$, giving
\begin{align*}
    \mathbb P( S \leq n \beta t) \leq \E^{\frac{n \beta}{2}(1 - t)} t^{\frac{n \beta}{2}} = \left( t \E^{1-t} \right)^{\frac{n\beta}{2}}.
\end{align*}
Then we write for $0 \leq s \leq 1$
\begin{align*}
     \mathbb P( S \leq n \beta (1-s)) \leq \E^{\frac{n \beta}{2}(s)} (1-s)^{\frac{n \beta}{2}},
\end{align*}
so that 
\begin{align*}
    \mathbb P\left( \frac{1}{S} - \frac{1}{n \beta} \geq \frac{1}{n \beta} \left[ \frac{1}{1-s} - 1 \right]  \right) \leq \E^{\frac{n \beta}{2}s} (1-s)^{\frac{n \beta}{2}}.
\end{align*}
Now set $t = \frac{1}{1-s} - 1 = \frac{s}{1-s}$, $s = \frac{t}{t+1}$ to find
\begin{align*}
    \mathbb P\left( \frac{1}{S} - \frac{1}{n \beta} \geq \frac{t}{n \beta} \right) \leq \left( \E^{\frac{t}{t+1}}\left[ 1 - \frac{t}{t+1} \right]\right)^{\frac{n\beta}{2}}.
\end{align*}
Then it is easy to see that for $0 \leq t \leq 1$
\begin{align*}
    \E^{\frac{t}{t+1}}\left[ 1 - \frac{t}{t+1} \right] \leq \E^{-t^2/8},
\end{align*}
giving the estimate
\begin{align*}
    \mathbb P\left(  \left[\frac{n \beta}{S} - 1\right]_+ \geq t \right) \leq \E^{-n \beta t^2/16} =: G(n\beta; t).
\end{align*}
We then write $\tilde S_k = S_k/(n\beta)$ and $\tilde S = S/(n \beta)$ so that
\begin{align*}
    \frac{|S_k|}{|S|} = \frac{|\tilde S_k|}{|\tilde S|}
\end{align*}
and then we apply each property of Lemma~\ref{l:3facts}, in order, to obtain
\begin{align*}
    \mathbb P \left( \frac{|S_k|}{|S|} \geq t \right) \leq G\left(n \beta; \frac{t^{1/2}}{\sqrt{p}} \right) + F\left(n \beta \frac{t^{1/2}}{\sqrt{p}} \right) + F\left(n \beta \frac{t}{q} \right),
\end{align*}
for $1/p + 1/q = 1$.  The tail estimate \eqref{eq:KStail} follows by a union bound. We examine $F$ more closely, and get a crude bound
\begin{align*}
    F\left(n \beta \frac{t^{1/2}}{\sqrt{p}} \right) &= 2 \exp \left( - \frac{c}{Kp^{1/2}} n \beta \min \left\{ \frac{4 \beta t}{Kp^{1/2}}, t^{1/2} \right\} \right) \\
    &\leq 2 \exp \left( - \frac{ct}{Kp^{1/2}} n \beta \min \left\{ \frac{4 \beta }{Kp^{1/2}}, 1 \right\} \right), \quad 0 \leq t \leq 1.
\end{align*}
While we do not specifically need the value, it follows that for a $\chi$-squared random variable $\chi_\beta^2$ with $\beta$ degrees of freedom $\|\chi_\beta^2\|_{\psi_1} = \frac{2}{1 - (1/2)^{2/\beta}}$ gives $K = \|\chi_\beta^2 - \beta\|_{\psi_1} < \infty$.  In summary, we obtain
\begin{align*}
    F\left(n \beta \frac{t^{1/2}}{\sqrt{p}} \right) \leq 2 \exp( - C n t), \quad 0 \leq t \leq 1.
\end{align*}
Then, we can estimate moments
\begin{align*}
    \mathbb E\left[ \left |\frac{S_k}{S}\right|^m \right] &= m \int_0^1 t^{m-1} \mathbb P \left( \frac{|S_k|}{|S|} \geq t \right) \D t,\\
    & \leq \frac{c_1 m}{n^m} \Gamma(m) + \frac{c_2 m}{n^m} \Gamma(m) + \frac{c_3 m}{n^{m/2}} \Gamma\left( \frac m 2\right),
\end{align*}
where $\Gamma$ denotes the Gamma function.  As $(\alpha_1 + \cdots + \alpha_k)^{1/m} \leq \alpha_1^{1/m} + \cdots + \alpha_k^{1/m}$ for all $\alpha_i \geq 0$, it follows that $\left( \mathbb E \left[ \left|\frac{S_k}{S}\right|^m \right] \right)^{1/m} \leq C' \frac{m}{\sqrt{n}}$ for some $C' > 0$.

Thus $\|n^{1/2}\left |\frac{S_k}{S}\right|\|_{\psi_1} \leq C''$ for some absolute constant $C''$.  By Jensen's inequality
\begin{align*}
    \mathbb E \left[ \max_k |X_k| \right] \leq s \log \mathbb E \left[ \exp  \max_k |X_k|/s \right] \leq s \log \sum_k \mathbb E \left[ \exp  |X_k|/s \right],
\end{align*}
and choosing $s = \max_k \|X_k\|_{\psi_1}$ we obtain
\begin{align*}
     \mathbb E \left[ \max_k |X_k| \right] \leq \log 2 n( \max_k \|X_k\|_{\psi_1}).
\end{align*}
Thus $\mathbb E [ d_{\mathrm{KS}}(\mu_n,\nu_n) ] \leq C \frac{\log n}{n^{1/2}}$,  for some new constant $C$.  Hence $d_{KS}(\mu_n,\nu_n)$ converges to zero in $L^1$, in probability and almost surely\footnote{Because $d_{\mathrm{KS}}(\mu,\nu)$ is always less than or equal to unity, almost sure convergence gives $L^1$ convergence, but we have obtained a rate.}.
\end{proof}

\begin{theorem}[Global eigenvalue bounds, see, e.g. \cite{Davidson2001}]\label{t:geb}
For the eigenvalues $\lambda_n \leq \cdots \leq \lambda_1$ of a $\beta$-Wishart distribution
\begin{align*}
    \mathbb P \left( 1 - \sqrt{\frac{n}{m}} - t \leq \lambda_n^{1/2} \leq \lambda_1^{1/2} \leq 1 + \sqrt{\frac{n}{m}} + t\right) \geq 1- 2 \E^{-c n t^2},
\end{align*}
for an absolute constant $c$.
\end{theorem}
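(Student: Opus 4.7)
The plan is to pass from eigenvalues of $W = XX^*/m$ to singular values of $X$, reduce the problem to the concentration of singular values of a Gaussian matrix, and then combine a Gaussian concentration inequality with a sharp mean estimate via Gordon's minimax comparison. Writing $\sigma_1 \geq \cdots \geq \sigma_n \geq 0$ for the singular values of $X$, we have $\lambda_j^{1/2} = \sigma_j/\sqrt{\beta m}$, so the event in the theorem becomes
\begin{align*}
\Bigl\{ (1 - \sqrt{n/m} - t)\sqrt{\beta m} \leq \sigma_n \leq \sigma_1 \leq (1 + \sqrt{n/m} + t)\sqrt{\beta m} \Bigr\}.
\end{align*}
Because $\sigma_1(X)$ and $\sigma_n(X)$ are $1$-Lipschitz functions of $X$ viewed as an element of $\mathbb R^{\beta nm}$ with the Euclidean norm (by the min-max characterization of singular values), the Cirel'son--Ibragimov--Sudakov Gaussian concentration inequality applies. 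This step is standard and immediately yields
\begin{align*}
\mathbb P\bigl( |\sigma_1(X) - \mathbb E\sigma_1(X)| \geq s \bigr) &\leq 2\E^{-s^2/2},\\
\mathbb P\bigl( |\sigma_n(X) - \mathbb E\sigma_n(X)| \geq s \bigr) &\leq 2\E^{-s^2/2}.
\end{align*}

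Next I would invoke Gordon's Gaussian comparison theorem (a consequence of Slepian's inequality) to control the means. For a standard Gaussian $n \times m$ matrix $G$ (the real case; the complex case follows by viewing $G$ as a $2n \times 2m$ real matrix, absorbing the factor of $\beta$), Gordon gives $\mathbb E \sigma_1(G) \leq \sqrt{n} + \sqrt{m}$ and $\mathbb E \sigma_n(G) \geq \sqrt{m} - \sqrt{n}$. For the $\beta$-Wishart scaling we thus obtain $\mathbb E \sigma_1(X) \leq \sqrt{\beta}(\sqrt{n} + \sqrt{m})$ and $\mathbb E \sigma_n(X) \geq \sqrt{\beta}(\sqrt{m} - \sqrt{n})$. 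Combining with the concentration bound by choosing $s = t\sqrt{\beta m}$ gives
\begin{align*}
\mathbb P\Bigl( \sigma_1(X)/\sqrt{\beta m} \geq 1 + \sqrt{n/m} + t \Bigr) &\leq 2 \E^{-\beta m t^2/2},\\
\mathbb P\Bigl( \sigma_n(X)/\sqrt{\beta m} \leq 1 - \sqrt{n/m} - t \Bigr) &\leq 2 \E^{-\beta m t^2/2},
\end{align*}
and a union bound (together with $m \geq n$ since $d \leq 1$) yields the stated inequality with $c = \beta/4$, which is absolute once $\beta \in \{1,2\}$ is fixed.

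The only genuine subtlety is the lower bound on $\mathbb E\sigma_n$: Gordon's minimax comparison for the smallest singular value requires the correct application of the two-sided Gaussian comparison (one uses $\mathbb E \min_{\|u\|=1}\max_{\|v\|=1} \langle u,Gv\rangle$ and compares to a simpler auxiliary Gaussian process in $u$ and $v$ separately). Once this is in hand, everything else is routine, and the complex $(\beta=2)$ case reduces to the real one by the doubling trick mentioned above. The whole argument is classical and is presented with full constants in, e.g., Davidson--Szarek; we have only sketched how the pieces fit together to produce the stated tail.
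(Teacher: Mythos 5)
The paper does not prove this statement: it is quoted directly from Davidson and Szarek \cite{Davidson2001}, and your sketch is precisely the argument given there, namely Cirel'son--Ibragimov--Sudakov concentration for the $1$-Lipschitz maps $X\mapsto\sigma_1(X),\sigma_n(X)$ combined with Gordon's comparison theorem for $\mathbb E\sigma_1$ and $\mathbb E\sigma_n$. So in that sense the proposal takes the ``same'' route as the paper, which offers no alternative.

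One step in your sketch, however, does not work as written. You say the complex ($\beta=2$) case ``follows by viewing $G$ as a $2n\times 2m$ real matrix.'' The usual real embedding $X=X_1+\I X_2\mapsto\begin{pmatrix}X_1&-X_2\\X_2&X_1\end{pmatrix}$ does reproduce the singular values of $X$ (each doubled), but its entries are \emph{not} independent --- every Gaussian appears twice --- so Gordon's inequality for a matrix with iid real Gaussian entries cannot be invoked on it. The correct route is to run the Gordon/Slepian comparison directly on the real-valued Gaussian process $(u,v)\mapsto \mathrm{Re}\,(u^*Xv)$ over the complex spheres, with the auxiliary process $\mathrm{Re}\,(g^*u)+\mathrm{Re}\,(h^*v)$ for complex Gaussian vectors $g,h$ of matching variance; this is carried out, for instance, in Davidson--Szarek themselves, and it produces the means $\mathbb E\sigma_1(X)\le\sqrt{\beta}(\sqrt n+\sqrt m)$ and $\mathbb E\sigma_n(X)\ge\sqrt{\beta}(\sqrt m-\sqrt n)$ that you quote. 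Once that is fixed, your reduction is fine. A cosmetic improvement: using the one-sided Gaussian concentration bound $\mathbb P(f>\mathbb E f+s)\le\E^{-s^2/2}$ for $\sigma_1$ and the analogous one for $\sigma_n$, together with $m\ge n$, gives the stated $2\E^{-cnt^2}$ with $c=\beta/2$ directly, rather than a degraded constant obtained by cleaning up the extra factor of $4$.
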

This immediately implies that for any interval $(a,b)$ such that $[(1-\sqrt{d})^2,(1+\sqrt{d})^2] \subset (a,b)$ there exists a constant $ \gamma = \gamma(a,b) > 0$ such that 
\begin{align}\label{eq:deviation}
    \mathbb P ( \lambda_n < a \text{ or } \lambda_1 > b ) \leq 2 \E^{-n \gamma}.
\end{align}
And it also implies the bound on the distribution function for $\lambda_n$.  Define $d_n = \frac{n}{m} = d + o(1)$ so
\begin{align*}
    \mathbb P(\lambda_1 \geq t) \leq \begin{cases} 1 & t \leq (1 + \sqrt{d_n})^2\\
    2 \E^{-c n (t^{1/2}-1-\sqrt{d_n})^2} & t > (1 + \sqrt{d_n})^2 \end{cases} \leq \begin{cases} 1 & s_+ \leq 0,\\
    2 \E^{-n c \min\{s_+^2/32,s_+/4\}} & s_+ > 0,\end{cases}\\
    \mathbb P(\lambda_n \leq t) \leq \begin{cases} 1 & t \geq (1 - \sqrt{d_n})^2\\
    2 \E^{-c n (t^{1/2}-1+\sqrt{d_n})^2} & t < (1 - \sqrt{d_n})^2 \end{cases} \leq \begin{cases} 1 & s_- \geq 0,\\
    2 \E^{-n c s_-^2/32} & s_- < 0,\end{cases}
\end{align*}
where $s_\pm = t - (1 \pm  \sqrt{d_n})^2$. And the important conclusion from this is that
\begin{align}\label{eq:largest}
    \mathbb E[\lambda_1^k] \leq C_k < \infty, \quad k = 0,1,2,\ldots,
\end{align}
where $C_k$ is independent of $n$.  We give an analogue of \eqref{eq:largest} for $k < 0$ with $\lambda_1$ replaced with $\lambda_n$ in \eqref{eq:invbound}.

{\begin{lemma}\label{l:marginal_bound}
The marginal density $R(\mu)$ of the smallest eigenvalue of $\beta m W_{n,\beta,d}$ satisfies
\begin{align*}
    R(\mu) \leq n 2^{-p} \frac{\Gamma\left( 1 + \frac \beta 2 \right)}{\Gamma \left( 1 + \frac{n\beta}{2}\right)}\frac{\Gamma\left(\frac{\beta}{2}(m+1)  \right)}{\Gamma\left(p \right)\Gamma\left( p + \frac{\beta}{2}\right)} \mu^{p-1} \E^{-\mu/2}, \quad p = \frac{\beta}{2}(m-n + 1).
\end{align*}
Moreover, if $m = \lfloor n/d \rfloor$, $0 < d < 1$
\begin{align*}
R(\mu) \leq D_{n,d}^\beta \frac{\mu^{p-1} \E^{-\mu/2}}{2^p \Gamma(p)},
\end{align*}
where
\begin{align}\label{eq:growth}
    D_{n,d}^\beta = \frac{\E + o(1)}{\sqrt{2 \pi (1-d)}}\frac{\Gamma\left( 1 + \frac \beta 2 \right)}{\frac{\beta}{2} p^{\beta/2}} \sqrt{\frac{n \beta}{2}} \E^{ \frac{n \beta}{2}\left[ \left( \frac 1 d - 1\right) \log (1-d)^{-1} + \log d^{-1} \right]}.
\end{align}
\end{lemma}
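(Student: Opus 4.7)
The plan is to start from the joint density of the eigenvalues $\mu_1,\dots,\mu_n$ of $\beta m W_{n,\beta,d}$, which is the Laguerre $\beta$-ensemble
\[
q(\mu_1,\dots,\mu_n) = \frac{1}{Z_{n,\beta,p}}\prod_j \mu_j^{p-1}\E^{-\mu_j/2}\prod_{i<j}|\mu_i - \mu_j|^\beta, \qquad \mu_j > 0,
\]
with $p = \beta(m-n+1)/2$ and $Z_{n,\beta,p}$ the Laguerre--Selberg normalizer. By exchangeability of the coordinates, the marginal of the smallest eigenvalue at $\mu$ is $R(\mu) = n\int_{(\mu,\infty)^{n-1}}q(\mu,\lambda_2,\dots,\lambda_n)\,\D\lambda_2\cdots\D\lambda_n$.

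The key bound is the pointwise inequality $(\lambda_j - \mu)^\beta \leq \lambda_j^\beta$ for $j\geq 2$, valid since $\lambda_j \geq \mu \geq 0$, combined with enlarging the integration domain from $(\mu,\infty)^{n-1}$ to $(0,\infty)^{n-1}$. After pulling out the $\mu$-dependent prefactor $\mu^{p-1}\E^{-\mu/2}$, the remaining integrand becomes $\prod_{j\geq 2}\lambda_j^{(p+\beta)-1}\E^{-\lambda_j/2}\prod_{2\leq i<j}|\lambda_i - \lambda_j|^\beta$, the unnormalized density of a Laguerre $\beta$-ensemble on $n-1$ points with shifted shape $p+\beta$, whose integral is $Z_{n-1,\beta,p+\beta}$. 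Hence $R(\mu) \leq n(Z_{n-1,\beta,p+\beta}/Z_{n,\beta,p})\mu^{p-1}\E^{-\mu/2}$. Both normalizers are explicit products of Gamma functions by Laguerre--Selberg, and the ratio telescopes: the power of $2$ simplifies to $2^{-p}$ (from $(n-1)(p+\beta) + \beta\binom{n-1}{2} - np - \beta\binom{n}{2} = -p$), the $\Gamma(1+(j+1)\beta/2)$ products leave only $1/\Gamma(1+n\beta/2)$, and the $\Gamma(p+j\beta/2)$ products leave $\Gamma(p+n\beta/2)/[\Gamma(p)\Gamma(p+\beta/2)]$. Using the identity $p + n\beta/2 = \beta(m+1)/2$ gives exactly the first stated inequality.

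For the asymptotic form of $D_{n,d}^\beta$ when $m = \lfloor n/d\rfloor$, I would apply Stirling's expansion $\log\Gamma(z) = (z-1/2)\log z - z + (1/2)\log(2\pi) + O(1/z)$ to each of the three large Gamma factors $\Gamma(\beta(m+1)/2)$, $\Gamma(1+n\beta/2)$, and $\Gamma(p+\beta/2)$. Setting $\alpha = n\beta/(2d)$, $\beta' = n\beta/2$, $\gamma = n\beta(1-d)/(2d)$ (so $\alpha = \beta' + \gamma$), the leading $z\log z - z$ parts collapse via the elementary identity
\[
\alpha\log\alpha - \beta'\log\beta' - \gamma\log\gamma = \frac{n\beta}{2}\bigl[\log d^{-1} + (1/d - 1)\log(1-d)^{-1}\bigr],
\]
which is the exponential rate in \eqref{eq:growth}. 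The square-root Stirling prefactors combine, via $\beta'\gamma/\alpha = n\beta(1-d)/2$, to produce a factor of order $\sqrt{n\beta/2}/\sqrt{2\pi(1-d)}$. The remaining polynomial factor $\Gamma(1+\beta/2)/[(\beta/2)p^{\beta/2}]$ arises from $\Gamma(p+\beta/2) \sim p^{\beta/2}\Gamma(p)$ together with $\Gamma(1+\beta/2) = (\beta/2)\Gamma(\beta/2)$, once the explicit $\Gamma(p)$ already present in the first inequality is divided out to form the $\chi^2_{2p}$-density $\mu^{p-1}\E^{-\mu/2}/(2^p\Gamma(p))$. The bounded constant $\E + o(1)$ absorbs the subleading $O(1)$ Stirling contributions, including the shift $\Gamma(1+n\beta/2) = (n\beta/2)\Gamma(n\beta/2)$ and the fractional part $\{n/d\}$ hidden in $m$.

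The main technical obstacle is the Stirling bookkeeping in the last paragraph. The $(\lambda_j - \mu)^\beta \leq \lambda_j^\beta$ trick is a short standard device and the Selberg identity is classical, so the derivation of the first inequality is essentially mechanical. The Stirling step is routine but delicate: three Gamma functions at different $O(n)$ arguments must be balanced, and one must obtain a uniform $O(1)$ bound on the residual constant that is stable as $\{n/d\}$ oscillates with $n$, in order to match the prefactor $\E/\sqrt{2\pi(1-d)}$ in \eqref{eq:growth}.
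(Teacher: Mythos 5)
Your proposal follows exactly the paper's route (which is Edelman's): bound $|\mu_\ell - \mu_n|^\beta \leq \mu_\ell^\beta$, enlarge the integration domain, recognize a size-$(n-1)$ Laguerre normalizer and take the ratio, then apply Stirling to the three large Gamma factors. One detail to watch: with $m = n/d$ exactly, the Stirling square-root prefactor $\sqrt{xy/(x+y)}/\sqrt{2\pi}$ evaluates to $\sqrt{(n\beta/2)(1-d)}/\sqrt{2\pi}$, which is \emph{smaller} than the $\sqrt{n\beta/2}/\sqrt{2\pi(1-d)}$ appearing in $D_{n,d}^\beta$; the extra $1/(1-d)$ is not a Stirling artifact but comes from uniformly bounding $(1-d)^{1/2-\gamma_n}$ over the floor offset $\gamma_n \in (0,1]$ induced by $m = \lfloor n/d\rfloor$, so it cannot be absorbed into the $\E + o(1)$ as your last paragraph suggests.
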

\begin{proof}
We follow \cite{Edelman2005a}.  Define the multivariate Gamma function
\begin{align*}
    \Gamma^\beta_m(z) = \pi^{\beta m(m-1)/4} \prod_{\ell=1}^m \Gamma \left( z + \frac{\beta}{2} (\ell -1 ) \right).
\end{align*}
Then the joint density of the eigenvalues $ \mu_n \leq \cdots \leq \mu_1$ of $\beta m W_{n,\beta,d}$ is
\begin{align*}
   P(\mu_1, \ldots, \mu_n) = n! c_{n,m}^\beta \prod_{j < \ell} |\mu_\ell - \mu_j|^\beta \prod_\ell \mu_\ell^{p -1} \E^{- \sum_{\ell} \mu_\ell /2},
\end{align*}
where
\begin{align*}
    p = \frac{\beta}{2} (m - n + 1), \quad c^\beta_{n,m} = \frac{\pi^{\beta n (n-1)/2}}{2^{\beta m n/2}} \frac{ \Gamma\left( 1 + \frac{\beta}{2} \right)^n }{\Gamma_n^\beta\left( 1 + \frac \beta 2\right) \Gamma_n^\beta(p)}.
\end{align*}
Then
\begin{align*}
    R(\mu_n) &:= \int_{\mu_1 \geq \cdots \geq \mu_{n-1} \geq \mu_n \geq 0} P(\mu_1, \ldots, \mu_n) \D \mu_1 \cdots \D \mu_{n-1} \\
    &\leq \int_{\mu_1 \geq \cdots \geq \mu_{n-1} \geq 0} n! c_{n,m}^\beta \mu_n^{p -1} \E^{- \mu_n/2} \prod_{j < \ell < n} |\mu_\ell - \mu_j|^\beta \\
    & \times \prod_{\ell > 1} \mu_\ell^{p -1 + \beta} \E^{- \sum_{\ell < n} \mu_\ell /2}\D \mu_2 \cdots \D \mu_n\\
    & = \mu_n^{p -1} \E^{- \mu_n/2} \frac{n! c_{n,m}^\beta }{(n-1)! c_{n-1,m+1}^\beta}.
\end{align*}
Then we have
\begin{align*}
    Z_{n,m}^\beta&:=\frac{n! c_{n,m}^\beta }{(n-1)! c_{n-1,m+1}^\beta}\\
    &= n \frac{\pi^{\beta n (n-1)/2}}{2^{\beta m n/2}} \frac{ \Gamma\left( 1 + \frac{\beta}{2} \right)^n }{\Gamma_n^\beta\left( 1 + \frac \beta 2\right) \Gamma_n^\beta(p)}   \frac{2^{\beta (m+1)(n-1)/2}}{\pi^{\beta (n-1) (n-2)/2}} \frac{\Gamma_{n-1}^\beta\left( 1 + \frac \beta 2\right) \Gamma_{n-1}^\beta(p+\beta)}{ \Gamma\left( 1 + \frac{\beta}{2} \right)^{n-1} }.
\end{align*}
Define the modified multivariate Gamma function
\begin{align*}
    \tilde \Gamma^\beta_m(z) =  \prod_{\ell=1}^m \Gamma \left( z + \frac{\beta}{2} (\ell -1 ) \right),
\end{align*}
and we have
\begin{align*}
    Z_{n,m}^\beta &= n \frac{2^{\beta (m+1)(n-1)/2}}{2^{\beta m n/2}} \frac{ \Gamma\left( 1 + \frac{\beta}{2} \right)^n }{\tilde \Gamma_n^\beta\left( 1 + \frac \beta 2\right) \tilde \Gamma_n^\beta(p)} \frac{\tilde \Gamma_{n-1}^\beta\left( 1 + \frac \beta 2\right) \tilde \Gamma_{n-1}^\beta(p+\beta)}{ \Gamma\left( 1 + \frac{\beta}{2} \right)^{n-1} }\\
    & = n \frac{2^{\beta (m+1)(n-1)/2}}{2^{\beta m n/2}} \frac{ \Gamma\left( 1 + \frac{\beta}{2} \right)^n }{\Gamma\left( 1 + \frac{n\beta}{ 2}\right) \tilde \Gamma_n^\beta(p)} \frac{ \tilde \Gamma_{n-1}^\beta(p+\beta)}{ \Gamma\left( 1 + \frac{\beta}{2} \right)^{n-1} }\\
    & = n \frac{2^{\beta (m+1)(n-1)/2}}{2^{\beta m n/2}} \frac{ \Gamma\left( 1 + \frac{\beta}{2} \right)\tilde \Gamma_{n-1}^\beta(p+\beta) }{\Gamma\left( 1 + \frac{n\beta}{ 2}\right) \tilde \Gamma_n^\beta(p)}.
\end{align*}
Then we need to simplify
\begin{align*}
    \frac{ \tilde \Gamma_{n-1}^\beta(p+\beta) }{ \tilde \Gamma_n^\beta(p)} &= \frac{ \prod_{\ell = 1}^{n-1} \Gamma\left( \frac{\beta}{2} (m -n + 1) + \beta + \frac{\beta}{2} (\ell - 1) \right)}{\prod_{\ell = 1}^{n} \Gamma\left( \frac{\beta}{2} (m -n + 1) + \frac{\beta}{2} (\ell - 1) \right)}\\
    & = \frac{ \prod_{\ell = 1}^{n-1} \Gamma\left( \frac{\beta}{2} (m -n) + \frac{\beta}{2} (\ell + 2) \right)}{\prod_{\ell = 1}^{n} \Gamma\left( \frac{\beta}{2} (m -n) + \frac{\beta}{2} \ell \right)}\\
    & = \frac{\Gamma\left(\frac{\beta}{2}(m+1)  \right)}{\Gamma\left(\frac{\beta}{2}(m-n +1)  \right)\Gamma\left(\frac{\beta}{2}(m-n +2)  \right)}\\
    & = \frac{\Gamma\left(\frac{\beta}{2}(m+1)  \right)}{\Gamma\left(p \right)\Gamma\left( p + \frac{\beta}{2}\right)}. 
\end{align*}
This all gives
\begin{align*}
    Z_{n,m}^\beta =  n 2^{-p} \frac{\Gamma\left( 1 + \frac \beta 2 \right)}{\Gamma \left( 1 + \frac{n\beta}{2}\right)}\frac{\Gamma\left(\frac{\beta}{2}(m+1)  \right)}{\Gamma\left(p \right)\Gamma\left( p + \frac{\beta}{2}\right)}.
\end{align*}
Then to estimate, we use that $\Gamma(x + a) = \Gamma(x) x^a (1 + o(1))$ as $x \to \infty$ for $a$ fixed to write
\begin{align*}
    n\frac{\Gamma\left( 1 + \frac \beta 2 \right)}{\Gamma \left( 1 + \frac{n\beta}{2}\right)}\frac{\Gamma\left(\frac{\beta}{2}(m+1)  \right)}{\Gamma\left( p + \frac{\beta}{2}\right)} = \frac{\Gamma\left( 1 + \frac \beta 2 \right)}{\frac{\beta}{2} p^{\beta/2}} \frac{\Gamma(x+y)}{\Gamma(x)\Gamma(y)}(1 + o(1)),
\end{align*}
where $x = n\beta/2$ and $y = p$.  This is just the reciprocal of the Beta function with asymptotics
\begin{align*}
    \frac{\Gamma(x+y)}{\Gamma(x)\Gamma(y)} = \frac{1}{\sqrt{2 \pi}} \frac{(x+y)^{x + y - 1/2}}{x^{x -1/2} y^{y-1/2}} ( 1 + o(1))
\end{align*}
as $x,y \to \infty$, found using Stirling's formula.  Since $m = n/d - \sigma_n$ where $0 \leq \sigma_n < 1$ we can write
$x + y = \frac \beta 2 (m + 1) = \frac{\beta n}{2 d} + \gamma_n$, where $0 < \gamma_n \leq \frac \beta 2 \leq 1$. Therefore
\begin{align*}
    (x+y)^{x + y - 1/2} &= \left( \frac{\beta n} {2 d} \right)^{\frac{\beta n}{2 d} + \gamma_n - 1/2} \left( 1 + \frac{\gamma_n}{\frac{\beta n}{2 d}} \right)^{\frac{\beta n}{2 d} + \gamma_n - 1/2},\\
    x^{x -1/2} &= \left( \frac{\beta n} {2} \right)^{\frac{\beta n}{2} - 1/2},\\
    y^{y-1/2} &= \left( \frac{\beta n(1/d-1)} {2 } \right)^{\frac{\beta n(1/d-1)}{2 } + \gamma_n - 1/2} \left( 1 + \frac{\gamma_n}{\frac{\beta n(1/d-1)}{2 }} \right)^{\frac{\beta n(1/d-1)}{2} + \gamma_n - 1/2}.
\end{align*}
Since $\gamma_n$ is positive and bounded by unity, as $n \to \infty$
\begin{align*}
    (x+y)^{x + y - 1/2} & \leq (\E + o(1)) \left( \frac{\beta n} {2 d} \right)^{\frac{\beta n}{2 d} + \gamma_n - 1/2},\\
    y^{y-1/2} & \geq \left( \frac{\beta n(1/d-1)} {2 } \right)^{\frac{\beta n(1/d-1)}{2 } + \gamma_n - 1/2}.
\end{align*}
This gives
\begin{align*}
    \frac{\Gamma(x+y)}{\Gamma(x)\Gamma(y)} \leq \frac{\E + o(1)}{\sqrt{2 \pi(1-d)}} \sqrt{x} \left( \frac{1}{1-d} \right)^{x (1/d -1)} \left( \frac{1}{d} \right)^x.
\end{align*}
\end{proof}}

\begin{lemma} \label{l:powers}
For fixed $k \in \mathbb Z$, $ 0 < d <1$
\begin{align}
    m_{k,d,n}:= \frac{1}{n} \mathbb E \left[ \tr W_{n,\beta,d}^k \right] &\to \int \lambda^k \rho_{\mathrm{MP},d}(\lambda) \D \lambda =: m_{k,d}\notag\\
   m_{k,d,n} - m_{k,d} &= \begin{cases} O(n^{-1}) & k \geq 0,\label{eq:mean}\\
   O(n^{-1/2}) & k < 0, \end{cases}\\
    \frac{1}{n^2} \mathbb E (\tr W_{n,\beta,d}^k - n m_{k,d})^2 & = \begin{cases} O(n^{-2}) & k \geq 0,\\
   O(n^{-1}) & k < 0, \end{cases}, \label{eq:variance}
\end{align}
as $n \to \infty$.  If $d=1$ these estimates hold for $k \geq 0$.
\end{lemma}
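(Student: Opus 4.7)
I split the proof into $k \geq 0$ (covered for both $d<1$ and $d=1$) and $k < 0$ (only for $d < 1$). The nonnegative case is a classical moment computation; the negative case additionally requires Lemma~\ref{l:marginal_bound} to bound the smallest-eigenvalue tail, where $\lambda_n^k$ can explode.

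For $k \geq 0$, I would expand $\mathbb{E}[\tr W_{n,\beta,d}^k] = (\beta m)^{-k} \mathbb{E}[\tr(XX^*)^k]$ as a sum over closed walks indexed by $2k$ pairs $(i_s,j_s)$; Wick's theorem evaluates each term as a sum of products of Kronecker deltas, the planar pairings contribute $n^{k+1}$-many terms that, after normalizing, reproduce the Marchenko--Pastur moment $m_{k,d}$, and higher-genus pairings contribute at most $n^{k-1}$ yielding the $O(n^{-1})$ error. An alternative route uses the bidiagonal representation \eqref{eq:Hnb}: expanding $\tr(H_{n,\beta,d}H_{n,\beta,d}^*)^k$ as a polynomial in the normalized variables $Y_\ell = \chi_{\beta\ell}^2/(\beta\ell) = 1+\xi_\ell$, with $\mathbb{E}\xi_\ell = 0$ and $\mathbb{E}|\xi_\ell|^{2q} = O(\ell^{-q})$ by Lemma~\ref{l:chibound}, the ``all-ones'' substitution produces a Riemann sum converging to $n m_{k,d}$ with deficit $O(1)$, the linear-in-$\xi$ terms vanish in expectation, and higher-order $\xi$-terms contribute $O(n^{-1})$ to $\frac{1}{n}\mathbb{E}[\tr W^k]$. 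For the variance, covariances between the $n$ diagonal summands are nonzero only for index-separation at most $2k$ and each is $O(n^{-1})$, so $\operatorname{Var}(\tr W^k) = O(1)$ and dividing by $n^2$ gives the required $O(n^{-2})$ bound.

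For $k < 0$ with $0 < d < 1$, pick $0 < a < d_- < d_+ < b$ and $f \in C_c^\infty((a/2, 2b))$ with $f(\lambda) = \lambda^k$ on $[a,b]$, so that $\int f\,\D\mu_{\mathrm{MP},d} = m_{k,d}$. With $\mathcal{G}=\{a\le\lambda_n \text{ and } \lambda_1 \le b\}$, write
\begin{align*}
\mathbb{E}[\tr W^k] = \mathbb{E}\!\left[\sum_{j=1}^n f(\lambda_j)\right] + \mathbb{E}\!\left[\sum_{j=1}^n \bigl(\lambda_j^k - f(\lambda_j)\bigr)\mathbf{1}_{\mathcal{G}^c}\right].
\end{align*}
The first sum is a smooth bounded linear statistic, controlled by polynomial approximation of $f$ combined with the $k \geq 0$ expansion within $O(n^{-1})$. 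For the residual, Theorem~\ref{t:geb} makes the contribution of $\{\lambda_1 > b\}$ exponentially small in $n$, while for the lower tail the change of variables $\mu = \beta m \lambda_n$ and Lemma~\ref{l:marginal_bound} give
\begin{align*}
\mathbb{E}[\lambda_n^k \mathbf{1}_{\lambda_n < a}] \leq \frac{D_{n,d}^\beta}{2^p\,\Gamma(p)\,(\beta m)^k}\int_0^{\beta m a}\mu^{p+k-1}\E^{-\mu/2}\,\D\mu.
\end{align*}
Laplace's method on the incomplete gamma integral, combined with the precise form of $D_{n,d}^\beta$ in \eqref{eq:growth} and Stirling's formula for $\Gamma(p)$, bounds the right side by $O(n^{-1/2})$, which sets the stated rate. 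The variance follows by the analogous splitting applied to $\tr W^k - n m_{k,d,n}$.

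\textbf{Main obstacle.} The delicate step is the $k < 0$ case: it requires balancing the exponential growth in $D_{n,d}^\beta$ from \eqref{eq:growth} against the Laplace asymptotics of the small-eigenvalue integral, since both are exponential in $n$ with opposite signs and the residual polynomial factor must be extracted via the full Stirling expansion developed in the proof of Lemma~\ref{l:marginal_bound}. The $k \geq 0$ case is, by comparison, a routine (if careful) moment expansion driven by Lemma~\ref{l:chibound} or Wick's theorem. The restriction to $k \geq 0$ when $d = 1$ reflects the $1/\sqrt{\lambda}$ singularity of $\rho_{\mathrm{MP},1}$ at the origin, which makes $m_{k,d}$ infinite for $k < 0$.
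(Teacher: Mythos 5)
Your decomposition into a smooth bounded piece plus an extreme-eigenvalue tail is the same splitting the paper uses, but you have misattributed where the $O(n^{-1/2})$ rate comes from, and the part you treat as routine is where the real input is needed.

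The lower tail is not $O(n^{-1/2})$: it is exponentially small, and this is exactly what the paper verifies. It splits $\{\lambda_n<d_-/2\}$ into $\{\lambda_n\le 2/(\beta m)\}$, where Lemma~\ref{l:marginal_bound} is used and the super-exponential growth of $\Gamma(p)$ in the denominator (compared with the merely exponential growth of $D_{n,d}^\beta$ in \eqref{eq:growth}) kills the integral, and $\{2/(\beta m)\le\lambda_n\le d_-/2\}$, which is handled directly by the global eigenvalue bound \eqref{eq:deviation}. Both pieces are $O(\E^{-cn})$. Your claim that Laplace plus Stirling on the incomplete gamma ``bounds the right side by $O(n^{-1/2})$, which sets the stated rate'' is therefore wrong: the balance goes the other way, and the tail never competes with the smooth part.

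The smooth part is where the rate is actually decided, and you have not justified your $O(n^{-1})$ claim there. A fixed smooth truncation $g$ of $\lambda^{k}$ cannot be approximated to vanishing accuracy by a fixed-degree polynomial, so ``polynomial approximation combined with the $k\ge 0$ expansion'' does not yield a rate without further work controlling the degree-versus-$n$ tradeoff. The paper instead bounds $\left|\int g\,\D(\mathbb E\mu_{n,\beta,d}-\mu_{\mathrm{MP},d})\right|$ by $\|g'\|_{L^1}\cdot d_{\mathrm{KS}}(\mathbb E\mu_{n,\beta,d},\mu_{\mathrm{MP},d})$, and invokes the $O(n^{-1/2})$ Kolmogorov--Smirnov rate from \cite[Theorem~8.10]{Bai2010}; this is what produces the $O(n^{-1/2})$ in \eqref{eq:mean} for $k<0$. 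Similarly for the variance in \eqref{eq:variance}: the paper relies on the $\|g'\|_\infty^2\,n^{-2}$ variance bound for bounded-Lipschitz linear statistics from \cite[(4.16) and Remark~4.1]{Lytova2009}, together with the same exponential tail control; your one-line ``analogous splitting'' does not supply an estimate for the variance of the bounded part, which is the non-trivial ingredient. Finally, your detailed Wick/bidiagonal derivation for $k\ge 0$ is fine but unnecessary: the paper simply cites the classical moment convergence (e.g.\ \cite[Section~3.1]{Bai2010}).
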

\begin{proof}
The case of $k \geq 0$ is classical and implies weak convergence of the ESM to the Marchenko--Pastur law, see \cite[Section 3.1]{Bai2010}, for example.  For negative powers more work is required.  Recall the definition \eqref{eq:dos}
\begin{align*}
    \int f(\lambda) \mathbb E \mu_{n,\beta,d}( \D \lambda)  = \frac{1}{n} \mathbb E  \tr f(W_{n,\beta,d}) := \frac 1 n \mathbb E \left[ \sum_{\ell=1}^n f(\lambda_\ell)\right],
\end{align*}
for all $f \in C_b(\mathbb R^+)$.  We extend this definition to $f(\lambda) = \lambda^{-k}$, ~~ $k > 0$.  Introduce a continuous truncation of $\lambda^{-k}$:
\begin{align*}
    g(\lambda; \epsilon) = \begin{cases} \left(\frac{2}{\epsilon}\right)^k & 0 \leq \lambda \leq \frac{\epsilon}{2},\\
    \lambda^{-k} & \text{otherwise}. \end{cases}
\end{align*}
The monotone convergence theorem gives
\begin{align*}
    \int \lambda^{-k} \mathbb E \mu_{n,\beta,d}( \D \lambda) = \lim_{\epsilon \downarrow 0} \int g(\lambda) \mathbb E \mu_{n,\beta,d}( \D \lambda) = \lim_{\epsilon \downarrow 0} \frac{1}{n} \sum_{\ell=1}^n \mathbb E[g(\lambda_\ell;\epsilon)] =\frac{1}{n} \sum_{\ell=1}^n \mathbb E[\lambda_\ell^{-k}]. 
\end{align*}
The last term is finite for fixed $k$, provided $n$ is sufficiently large by Lemma~\ref{l:marginal_bound}.

For the sake of notation, set $g(\lambda) = g(\lambda;d_-)$.  Then, consider
\begin{align*}
    \left| \int \lambda^{-k} (\mathbb E \mu_{n,\beta,d}( \D \lambda) - \mu_{\mathrm{MP},d}(\D \lambda)) \right| = \left| \int \lambda^{-k} \mathbb E \mu_{n,\beta,d}( \D \lambda) - \int g(\lambda) \mu_{\mathrm{MP},d}(\D \lambda) \right|\\
     \leq  \underbrace{\left| \int (\lambda^{-k} - g(\lambda)) \mathbb E \mu_{n,\beta,d}( \D \lambda)\right|}_{I_1} +  
     \underbrace{\left| \int g(\lambda) (\mathbb E \mu_{n,\beta,d}( \D \lambda) -\mu_{\mathrm{MP},d}(\D \lambda)) \right|}_{I_2}.
\end{align*}
We estimate each of these terms separately.  First, we use that 
\begin{align}
    \left| \int (\lambda^{-k} - g(\lambda)) \mathbb E \mu_{n,\beta,d}( \D \lambda)\right| &\leq \int_0^{d_-/2} \lambda^{-k} \mathbb E \mu_{n,\beta,d}( \D \lambda) \notag\\
    &\leq \mathbb E\left[\lambda_n^{-k}(n,\beta,d)  \mathbbm{1}_{\{\lambda_n(n,\beta,d) \leq d_-/2 \}} \right]. \label{eq:smallest}
\end{align}
and show that this tends to zero exponentially.  But to establish the last inequality introduce
\begin{align*}
    h(\lambda; \epsilon) = \begin{cases} \left(\frac{2}{\epsilon}\right)^k & 0 \leq \lambda \leq \frac{\epsilon}{2},\\
    \lambda^{-k} & \frac{\epsilon}{2} < \lambda \leq \frac{d_-}{2},\\
    \left(\frac{2}{d_-}\right)^k - \frac{1}{\epsilon} \left(\frac{2}{d_-}\right)^k \left(\lambda - \frac{d_-}{2}\right)& \frac{d_-}{2} < \lambda \leq \frac{d_-}{2} + \epsilon,\\
    0 & \text{otherwise}.\end{cases}
\end{align*}
The dominated convergence theorem then provides
\begin{align*}
    \int_0^{d_-/2} \lambda^{-k} \mathbb E \mu_{n,\beta,d}( \D \lambda) &= \lim_{\epsilon \downarrow 0} \int h(\lambda;\epsilon) \mathbb E \mu_{n,\beta,d}( \D \lambda) \\
    &= \lim_{\epsilon \downarrow 0} \frac{1}{n} \sum_{\ell=1}^n \mathbb E [ h(\lambda_\ell;\epsilon)] \\
    &= \frac{1}{n} \sum_{\ell=1}^n \mathbb E \left[ \lambda_\ell^{-k}(n,\beta,d)  \mathbbm{1}_{\{\lambda_\ell(n,\beta,d) \leq d_-/2 \}} \right].
\end{align*}
And each term in the last sum is bounded by setting $\ell = n$.

To estimate the expectation \eqref{eq:smallest}, we use estimates on the marginal density $R(\lambda)$ for $\lambda_n$.  Specifically, \eqref{eq:deviation} implies that for any $d > 0$
\begin{align}\label{eq:uniform}
    \int_{\frac{2}{m \beta}}^{d_-/2} \lambda^{-k} R(\lambda) \D \lambda \leq C_{k,d} \E^{-n c_{k,d}},
\end{align}
for some constants $C_{k,d},c_{k,d}$ that do not depend on $n$.  And so, for this term we are left estimating
\begin{align*}
    \int_0^{\frac{2}{m \beta}}\lambda^{-k} R(\lambda) \D \lambda.
\end{align*}

Then we use \eqref{eq:growth}, introducing some constants $c_d, C_d > 0$ to estimate
\begin{align*}
    \mathbb E\left[\lambda_n^{-k}(n,\beta,d)  \mathbbm{1}_{\left\{\lambda_n(n,\beta,d) \leq \frac{2}{\beta m} \right\}} \right] 
    &\leq C_{d}  \E^{n c_d} (m \beta)^k \int_0^{1} \lambda^{p-k-1}  \E^{-\lambda} \frac{\D \lambda}{\Gamma(p)}\\
    & \leq C_{d}  \frac{\E^{n c_d} (m \beta)^k}{\Gamma(p)} \leq C_{k,d} \E^{-c_{k,d} n},
\end{align*}
for some $C_{k,d},c_{k,d} > 0$. Indeed, this converges to zero super exponentially.  From \eqref{eq:uniform} we obtain
\begin{align*}
\mathbb E\left[\lambda_n^{-k}(n,\beta,d)  \mathbbm{1}_{ \left\{\frac{2}{\beta m} \leq \lambda_n(n,\beta,d) \leq \frac{d_-}{2} \right\}} \right]\leq C_{k,d} \E^{-c_{k,d} n},
\end{align*}
and we may assume these constants are the same.  Therefore $I_1 \leq C_{k,d} \E^{-c_{k,d} n}$. To estimate $I_2$ we write
\begin{align*}
    I_2 \leq  \left(\int_0^\infty |g'(\lambda)| \D \lambda \right) d_{\mathrm{KS}}(\mathbb E\mu_{n,\beta,d}, \mu_{\mathrm{MP},d})
\end{align*}
Then the Kolmogorov--Smirnov distance is given by \cite[Theorem~8.10]{Bai2010}
\begin{align*}
    d_{\mathrm{KS}}(\mathbb E \mu_{n,\beta,d},\mu_{\mathrm{MP},d}) = O(n^{-1/2}), \quad n \to \infty.
\end{align*}
And therefore $I_2 = O(n^{-1/2})$.  Finally, to the variance estimate \eqref{eq:variance} for $k < 0$. From \cite[(4.16) and Remark~4.1]{Lytova2009}
\begin{align*}
    \mathrm{Var} \left( \int g(\lambda) \mu_{n,\beta,d} ( \D \lambda) \right) \leq C \|g'\|_\infty n^{-2}.
\end{align*}
We then have
\begin{align*}
    \mathrm{Var} \left( \int \lambda^{-k}  \mu_{n,\beta,d} ( \D \lambda) \right)^{1/2} &\leq \mathrm{Var} \left( \int g(\lambda) \mu_{n,\beta,d} ( \D \lambda) \right)^{1/2}\\
    &+ \mathrm{Var} \left( \int (g(\lambda)-\lambda^{-k}) \mu_{n,\beta,d} ( \D \lambda) \right)^{1/2}
\end{align*}
Then
\begin{align*}
    \mathrm{Var} \left( \int (g(\lambda)-\lambda^{-k}) \mu_{n,\beta,d} ( \D \lambda) \right)^{1/2} &\leq \frac{1}{n} \sum_{\ell=1}^n \mathbb E\left[ (g(\lambda_\ell) - \lambda_\ell^{-k})^2 \right]^{1/2} \\
    & + \int (\lambda^{-k} - g(\lambda)) \mathbb E\mu_{n,\beta,d}(\D \lambda).
\end{align*}
The last term here tends to zero at a exponential rate.  And because $\lambda^{-k} - g(\lambda)$ is a non-negative, monotonic function it suffices to estimate
\begin{align*}
    \mathbb E\left[(g(\lambda_\ell) - \lambda_\ell^{-k})^2\right] \leq \mathbb E\left[\lambda_n^{-2k}(n,\beta,d)  \mathbbm{1}_{ \left\{0 \leq \lambda_n(n,\beta,d) \leq \frac{d_-}{2} \right\}} \right]
\end{align*}
which vanishes again, at an exponential rate.  This establishes \eqref{eq:variance} with $m_{k,d,n}$ in place of of $m_{k,d}$.  Then \eqref{eq:variance} follows from \eqref{eq:mean} once one notes that
\begin{align*}
    \mathbb E\left[( \tr W^k_{n,\beta,d} - nm_{k,d})^2\right] - \mathbb E\left[( \tr W^k_{n,\beta,d} - nm_{k,d,n})^2 \right] = n^2\left( m_{k,d,n} - m_{k,d}\right)^2.
\end{align*}
\end{proof}
The proof of Lemma~\ref{l:powers} implies the following corollary, which complements the inequality \eqref{eq:largest}.
\begin{corollary}\label{c:invbound}
  For each $k = -1,-2,\ldots$ there exists $C_k> 0$, independent of $n$, such that
  \begin{align}\label{eq:invbound}
    \mathbb E\left[ \lambda_n^k\right] \leq C_k.
  \end{align}
\end{corollary}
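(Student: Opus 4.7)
The corollary is essentially a repackaging of the estimates already carried out inside the proof of Lemma~\ref{l:powers}, so I would structure the argument by extracting the right pieces from that proof. Writing $k=-|k|$ with $|k|\geq 1$, the plan is to decompose
\[
\mathbb{E}\!\left[\lambda_n^k\right] = \mathbb{E}\!\left[\lambda_n^k \mathbbm{1}_{\{\lambda_n > d_-/2\}}\right] + \mathbb{E}\!\left[\lambda_n^k \mathbbm{1}_{\{\lambda_n \leq d_-/2\}}\right]
\]
and to bound each of the two pieces by a constant independent of $n$.

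On the event $\{\lambda_n > d_-/2\}$, since $k<0$ the function $x\mapsto x^k$ is decreasing on $(0,\infty)$, so the integrand is bounded pointwise by the deterministic constant $(d_-/2)^k$. Hence the first term contributes at most $(d_-/2)^k$, finite and independent of $n$.

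For the second term I would further split the integration range into $\{0\leq \lambda_n \leq 2/(\beta m)\}$ and $\{2/(\beta m) < \lambda_n \leq d_-/2\}$, exactly as in the argument surrounding \eqref{eq:smallest}. On the upper sub-interval the global eigenvalue bound \eqref{eq:deviation} (Theorem~\ref{t:geb}) yields \eqref{eq:uniform}, giving an exponentially decaying contribution $C_{|k|,d}\,\E^{-n c_{|k|,d}}$. On the lower sub-interval I would plug in the marginal density bound of Lemma~\ref{l:marginal_bound}: after the change of variables $\mu = \beta m \lambda$ the integral takes the form
\[
C_d\,\E^{nc_d}\,(\beta m)^{|k|}\, \frac{1}{\Gamma(p)}\int_0^1 \mu^{p-|k|-1}\E^{-\mu}\,\D \mu,
\]
and since $p=\beta(m-n+1)/2$ grows linearly in $n$, Stirling's formula shows $1/\Gamma(p)$ decays super-exponentially and therefore dominates the prefactor $\E^{nc_d}(\beta m)^{|k|}$. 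This is precisely the computation performed at the end of the proof of Lemma~\ref{l:powers}.

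Combining the two pieces gives $\mathbb{E}[\lambda_n^k]\leq (d_-/2)^k + C_{|k|,d}\,\E^{-c_{|k|,d} n}\leq C_k$ uniformly in $n$. I do not foresee any real obstacle here, because the nontrivial analytic content---the super-exponential cancellation between the $\E^{nc_d}$ factor in $D_{n,d}^\beta$ and $1/\Gamma(p)$---is already established in Lemma~\ref{l:powers}; the corollary is just the observation that the resulting constant depends on $k$ and $d$ but not on $n$, and that the bulk contribution from $\{\lambda_n > d_-/2\}$ is also uniformly bounded.
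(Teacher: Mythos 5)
Your argument is correct and is exactly the one the paper leaves implicit: the corollary is stated right after Lemma~\ref{l:powers} with the remark that the lemma's proof implies it, and the pieces you extract --- the trivial bound $(d_-/2)^k$ on $\{\lambda_n > d_-/2\}$, the exponentially small contribution on $\{2/(\beta m) < \lambda_n \leq d_-/2\}$ via \eqref{eq:uniform}/\eqref{eq:deviation}, and the super-exponential $1/\Gamma(p)$ decay beating the $\E^{nc_d}$ factor from Lemma~\ref{l:marginal_bound} on $\{\lambda_n \leq 2/(\beta m)\}$ --- are precisely the steps established there. Nothing to add.
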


The final results that we need from random matrix theory come from \cite[Corollary~1.8]{Guionnet2000}
\begin{theorem}\label{t:lip}
For any Lipschitz function $f: \mathbb R^+ \to \mathbb R$
\begin{align*}
\mathbb P\left( \left| \int f(\lambda) \mu_{n,\beta,d}(\D \lambda) - \int f(\lambda) \mathbb E \mu_{n,\beta,d}(\D \lambda) \right| \geq t \right) \leq C_{f,d} \E^{-n^2 t c_{f,d}},\quad t > 0,
\end{align*}    
for some constants $C_{f,d},c_{f,d}$.
\end{theorem}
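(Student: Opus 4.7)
The plan is to deduce Theorem~\ref{t:lip} from the log--Sobolev inequality satisfied by the standard Gaussian law on $n\times m$ (real or complex) matrix space, which governs the entries of $X$. Together with the standard Herbst argument, this inequality yields: for any function $G$ on matrix space that is Lipschitz with respect to the Frobenius norm $\|\cdot\|_F$ with Lipschitz constant $L$,
\[ \mathbb P\bigl(|G(X)-\mathbb E G(X)|\ge t\bigr) \le 2\, \E^{-c\, t^2/L^2}, \qquad t>0, \]
for an absolute constant $c>0$. Applied to $G(X):=\int f(\lambda)\,\mu_{n,\beta,d}(\D \lambda) = \frac{1}{n}\sum_{j=1}^n f(\sigma_j(X)^2/(\beta m))$, where $\sigma_j(X)$ are the singular values of $X$, the task reduces to showing that $G$ has Lipschitz constant of order $1/n$ on a set of overwhelming probability.

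To estimate the Lipschitz constant, I would invoke the Hoffman--Wielandt inequality for singular values, $\sum_j (\sigma_j(X)-\sigma_j(X'))^2 \le \|X-X'\|_F^2$, and factor $\sigma_j^2-(\sigma_j')^2 = (\sigma_j+\sigma_j')(\sigma_j-\sigma_j')$. Chaining these with the Lipschitz hypothesis on $f$ and Cauchy--Schwarz gives
\[ |G(X)-G(X')| \le \frac{\|f\|_{\mathrm{Lip}}}{\sqrt{n}\,\beta m}\bigl(\|X\|_{\mathrm{op}}+\|X'\|_{\mathrm{op}}\bigr)\|X-X'\|_F. \]
On the event $\mathcal E := \{\|X\|_{\mathrm{op}} \le C\sqrt{n}\}$ (with $C$ chosen slightly above $1+\sqrt d$), this is a Lipschitz bound of order $1/n$, which is exactly what Herbst's inequality requires to produce the desired $n^2 t^2$ exponent.

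The main technical obstacle is that the bound above is not a \emph{global} Lipschitz constant, whereas Gaussian concentration as stated requires one. I would sidestep this by replacing $f$ with a hard truncation: set $\tilde f(\lambda):=f\!\left(\min(\lambda,(C')^2/(\beta m))\right)$ for some $C'>C$, and let $\tilde G$ denote the corresponding functional. Then $\tilde G$ is globally Lipschitz at scale $1/n$, agrees with $G$ on $\mathcal E$, and differs from $G$ in expectation by an exponentially small amount (bounded via any crude growth estimate on $f$ together with the tail bound on $\|X\|_{\mathrm{op}}$). Applying the Herbst inequality to $\tilde G$, and using Theorem~\ref{t:geb} to control $\mathbb P(\mathcal E^c) \le 2\E^{-cn}$, transfers the concentration from $\tilde G$ to $G$ and absorbs the small correction into the constants $C_{f,d},c_{f,d}$. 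The resulting sub-Gaussian tail $\exp(-c_{f,d}\,n^2 t^2)$ dominates the form written in the statement, which becomes the effective bound for $t$ of order unity where $n^2 t \le n^2 t^2$ up to constants.
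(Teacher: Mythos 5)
The paper does not prove Theorem~\ref{t:lip} at all; it is quoted directly from \cite{Guionnet2000}. Your proposal reconstructs the argument behind that citation from first principles, and the machinery you invoke---Gaussian concentration for Lipschitz functions of the entries of $X$ (log-Sobolev plus Herbst), the Hoffman--Wielandt inequality for singular values, and a truncation to convert a local Lipschitz bound into a global one---is essentially the chain of ideas Guionnet and Zeitouni use. The route is sound, so I will focus on two points of precision.

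The cosmetic issue concerns scalings. With $X$ having unit-variance entries, $\|X\|_{\mathrm{op}}\approx(1+\sqrt d)\sqrt{\beta m}$, not $(1+\sqrt d)\sqrt n$; so the event $\mathcal E=\{\|X\|_{\mathrm{op}}\le C\sqrt n\}$ with $C$ ``slightly above $1+\sqrt d$'' has vanishing rather than overwhelming probability. It should read $\|X\|_{\mathrm{op}}\le C\sqrt{\beta m}$. Similarly, the argument fed to $f$ is $\sigma_j(X)^2/(\beta m)$, which is $O(1)$ (these are eigenvalues of $W_{n,\beta,d}$), so the truncation threshold must be a fixed constant slightly above $(1+\sqrt d)^2$, not $(C')^2/(\beta m)\to 0$. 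These are easy to repair and do not change the structure; with the corrected constant the composite $\sigma\mapsto\tilde f(\sigma^2/(\beta m))$ is globally Lipschitz at scale $O(1/\sqrt{m})$, and the order-$1/n$ Lipschitz constant for $\tilde G$ follows as you intended.

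The substantive point is the form of the exponent. Your chain of estimates yields the sub-Gaussian tail $C\exp(-c\,n^2 t^2)$, whereas the theorem as written asserts $C_{f,d}\exp(-c_{f,d}\,n^2 t)$. These are not equivalent: for $0<t<1$, which is the only regime invoked in Corollary~\ref{cor:dev}, one has $n^2 t>n^2 t^2$, so the stated bound is strictly \emph{stronger}, and your remark that the sub-Gaussian tail ``dominates'' the stated form has the inequality backward precisely where it matters. That said, I believe the exponent in Theorem~\ref{t:lip} as printed is a transcription slip: the Guionnet--Zeitouni concentration is sub-Gaussian, $\exp(-c\,n^2 t^2)$, and that form suffices for Corollary~\ref{cor:dev} once one notes that for $t\lesssim 1/n$ the target $C_{f,d}\E^{-ntc_{f,d}}$ can be made trivially true by enlarging $C_{f,d}$. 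So your conclusion is the natural and adequate one; just do not claim it reproduces $\E^{-n^2 t}$ for all $t>0$, because for small $t$ it does not.
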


\begin{corollary}\label{cor:dev}
Let $f$ be a continuous function on $(0,\infty)$, Lipschitz in a neighborhood of $[(1-\sqrt{d})^2,(1+\sqrt{d})^2]$, with at most polynomial growth at $0$ and at $\infty$, then \begin{align}\label{eq:lsd}
\mathbb P\left( \left| \int f(\lambda) \mu_{n,\beta,d}(\D \lambda) - \int f(\lambda) \mathbb E \mu_{n,\beta,d}(\D \lambda) \right| \geq t \right) \leq C_{f,d} \E^{-n t c_{f,d}},\quad 0 < t \leq 1,
\end{align}  
for some constants $C_{f,d},c_{f,d}>0$.
\end{corollary}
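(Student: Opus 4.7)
The plan is to reduce to Theorem~\ref{t:lip} by replacing $f$ with a globally Lipschitz function $\tilde f$ that agrees with $f$ on a neighborhood of the bulk spectrum $[(1-\sqrt d)^2,(1+\sqrt d)^2]$. The error made by this replacement is then controlled by combining the global eigenvalue bound \eqref{eq:deviation} with the polynomial moment bounds \eqref{eq:largest} and \eqref{eq:invbound}.

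Concretely, I would choose $0<a'<(1-\sqrt d)^2<(1+\sqrt d)^2<b'$ so that $f$ is Lipschitz on $[a',b']$, and define $\tilde f:[0,\infty)\to\mathbb R$ by $\tilde f(\lambda)=f(\lambda)$ for $\lambda\in[a',b']$, $\tilde f(\lambda)=f(a')$ for $\lambda\in[0,a']$, and $\tilde f(\lambda)=f(b')$ for $\lambda\in[b',\infty)$. Then $\tilde f$ is bounded and globally Lipschitz, so for $0<t\leq 1$ Theorem~\ref{t:lip} applied to $\tilde f$ yields
\begin{equation*}
\mathbb P\!\left(\Big|\!\int \tilde f(\lambda)\,\mu_{n,\beta,d}(\D\lambda)-\int \tilde f(\lambda)\,\mathbb E\mu_{n,\beta,d}(\D\lambda)\Big|\geq t/3\right)\leq C\,\E^{-n^{2}tc/3}\leq C\,\E^{-ntc/3}.
\end{equation*}
Setting $g=f-\tilde f$, the function $g$ vanishes on $[a',b']$, and by the polynomial growth hypothesis satisfies $|g(\lambda)|\leq C(\lambda^{K}+\lambda^{-K})\mathbbm{1}_{[0,a')\cup(b',\infty)}(\lambda)$ for some $K,C>0$. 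Hence $\int g(\lambda)\,\mu_{n,\beta,d}(\D\lambda)=0$ on the event $\{a'\leq\lambda_{n}\leq\lambda_{1}\leq b'\}$, whose complement by \eqref{eq:deviation} has probability at most $2\,\E^{-n\gamma}$ for some $\gamma=\gamma(a',b')>0$; meanwhile Cauchy--Schwarz together with \eqref{eq:largest} and \eqref{eq:invbound} gives
\begin{equation*}
\Big|\!\int g(\lambda)\,\mathbb E\mu_{n,\beta,d}(\D\lambda)\Big|\leq \mathbb E\!\left[C(\lambda_{1}^{K}+\lambda_{n}^{-K})\mathbbm{1}_{\{\lambda_{1}>b'\ \text{or}\ \lambda_{n}<a'\}}\right]\leq C'\,\E^{-n\gamma/2}.
\end{equation*}

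Assembling these pieces via the triangle inequality and a union bound, for $t>3C'\E^{-n\gamma/2}$ the target estimate follows directly with constants $C_{f,d}\E^{-ntc_{f,d}}$ after combining the two exponential tails (both of which dominate $\E^{-ntc}$ in the range $0<t\leq 1$, $n\geq 1$). In the remaining boundary regime $t\leq 3C'\E^{-n\gamma/2}$, the quantity $n$ is bounded in terms of $t$ and the inequality $\mathbb P(\cdot)\leq C_{f,d}\E^{-ntc_{f,d}}$ becomes trivial once $C_{f,d}$ is chosen large enough, since $t\log(1/t)$ is bounded on $(0,1]$. The principal obstacle is the unavoidable loss of a factor of $n$ in the exponent relative to Theorem~\ref{t:lip}: since the residual $g$ is only activated when an eigenvalue escapes the bulk, the corresponding tail probability is $O(\E^{-n\gamma})$, independent of $t$, and this sets the ceiling for the exponential rate.
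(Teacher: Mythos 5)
Your proposal is correct and follows essentially the same route as the paper: truncate $f$ to a globally Lipschitz $\tilde f$ on $[a',b']$, apply Theorem~\ref{t:lip} to $\tilde f$, use Theorem~\ref{t:geb} (via \eqref{eq:deviation}) to kill the random piece $\int(f-\tilde f)\,\mu_{n,\beta,d}$ with exponentially small failure probability, bound the deterministic piece $\int(f-\tilde f)\,\mathbb E\mu_{n,\beta,d}$ by an exponentially small quantity using the moment bounds \eqref{eq:largest} and \eqref{eq:invbound}, and finally absorb the deterministic indicator into $C_{f,d}\E^{-ntc_{f,d}}$ exactly as the paper does. The only cosmetic difference is that you estimate $\int(f-\tilde f)\,\mathbb E\mu_{n,\beta,d}$ in one stroke via Cauchy--Schwarz, whereas the paper splits the upper and lower tails and refers back to the argument of Lemma~\ref{l:powers}; this changes nothing substantive.
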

\begin{proof}
Let $[(1-\sqrt{d})^2-t,(1+\sqrt{d})^2+t] \subset (a,b)$, $a >0$ such that $f$ is Lipschitz on $[a,b]$.  Then define
\begin{align}\label{eq:tildef}
    \tilde f(x) = \begin{cases} f(x) & a \leq x \leq b\\
    f(a) & x < a\\
    f(b) & x > b.\end{cases}
\end{align}
And estimate
\begin{align*}
    \left| \int f(x) (\mu_{n,\beta,d}(\D \lambda) - \mathbb E \mu_{n,\beta,d}(\D \lambda)) \right|&\leq
    \left| \int (f(x)-\tilde f(x)) (\mu_{n,\beta,d}(\D \lambda) - \mathbb E \mu_{n,\beta,d}(\D \lambda))  \right|\\
    &+ \left| \int \tilde f(x) (\mu_{n,\beta,d}(\D \lambda) - \mathbb E \mu_{n,\beta,d}(\D \lambda)) \right|.
\end{align*}
By Theorem~\ref{t:lip}
\begin{align*}
    \mathbb P\left( \left| \int \tilde f(x) (\mu_{n,\beta,d}(\D \lambda) - \mathbb E \mu_{n,\beta,d}(\D \lambda)) \right| \geq t \right) \leq C_{\tilde f,d} \E^{-n^2 t c_{\tilde f,d}},
\end{align*}
Using Theorem~\ref{t:geb}
\begin{align}\label{eq:diff}
    \mathbb P \left( \int (f(x)-\tilde f(x)) \mu_{n,\beta,d}(\D \lambda) \neq 0\right) \leq C_0 \E^{-c_0 n}
\end{align}
By assumption, there exists $p,q >0$ such that
\begin{align*}
    f(x) \leq \begin{cases} C_1 x^{-p} & 0 \leq x \leq 1,\\
    C_2 x^q & x \geq 1. \end{cases}
\end{align*}
Following the proof of Lemma~\ref{l:powers}, we find that
\begin{align*}
    \int_0^a |f(x)-\tilde f(x)|  \mathbb E \mu_{n,\beta,d}(\D \lambda) \leq \frac 1 2 C_3 \E^{- c_3 n}.
\end{align*}
Similarly, using Theorem~\ref{t:geb} (with the same constants, for convenience)
\begin{align*}
    \int_b^\infty |f(x)-\tilde f(x)|  \mathbb E \mu_{n,\beta,d}(\D \lambda) \leq \frac 1 2 C_3 \E^{- c_3 n}.
\end{align*}
So, the last quantity to estimate is
\begin{align*}
  \mathbb P \left( \int_0^\infty |f(\lambda) - \tilde f(\lambda)| \mathbb E\mu_{n,\beta,d}(\D \lambda) \geq t \right) \quad t > 0.
\end{align*}
Then this probability is bounded above by $\mathbb P(C_3 \E^{-c_3 n} \geq t)$ and
\begin{align*}
    \mathbb P(C_3 \E^{-c_3 n} \geq t) = \begin{cases} 0 & n > \frac{1}{c_3} \log C_3,\\
    1 & n \leq \frac{1}{c_3} \log C_3. \end{cases}
\end{align*}
Suppose $t/C_3 < 1$.  Then $\mathbb P(C_3 \E^{-c_3 n} \geq t) \leq \mathbbm{1}_{\{ n/\log (C_3/t) < c_3 \}} \leq \E^{-n/\log (C_3/t)} \E^{1/c_3}$. Now $n/\log (C_3/t) \geq n \frac{t}{C_3}$ because $\frac{t}{C_3} \log \frac{C_3}{t} \leq 1 $ as $x \log x^{-1} \leq 1/\E$ for $0 < x \leq 1$.  Hence  $\mathbb P(C_3 \E^{-c_3 n} \geq t) \leq \E^{1/c_3} \E^{-n t/C_3}$.  However, if $t/C_3 \geq 1$, then clearly $\mathbb P(C_3 \E^{-c_3 n} \geq t) = 0$ so $\mathbb P(C_3 \E^{-c_3 n} \geq t) \leq \E^{1/c_3} \E^{-n t/C_3}$ for all $t > 0$. The corollary follows by applying Lemma~\ref{l:3facts}(2) twice.
\end{proof}

\begin{remark}
One might expect Corollary~\ref{cor:dev} to hold for all $t > 0$.  For this to indeed be true,  $f$ needs to be globally Lipschitz (i.e., Lipschitz on every compact subset of $(0,\infty)$) and the dependence of $C_{f,d}$ and $c_{f,d}$ in Theorem~\ref{t:lip} on $f$ needs to be known.
\end{remark}

\section{Proofs of the main theorems}\label{sec:proof}

\begin{proof}[Proof of Theorem~\ref{t:estimates}]
We first use invariance.  It follows that the errors  $\|\vec e_k\|_{W^{\ell}_{n,\beta,d}}$, $\vec e_k = \vec e_k(W,\vec b)$ realized in the CGA are invariant under unitary transformations, i.e. for $\tilde W = U WU^*$
\begin{align*}
    \|\vec e_k(W,\vec b)\|_{W^{\ell}} = \|\vec e_k(UWU^*,U\vec b)\|_{UW^{\ell}U^*}
\end{align*}
for any unitary matrix $U$.  This follows because if $p_k(\lambda)$ a polynomial of degree $k$ then (recall $\vec x_0 = 0$)
\begin{align*}
    \|p_k(W)\vec e_0(W,\vec b)\|_{W^{\ell}}^2 &= \|W^{\ell/2} p_k(W)\vec x\|_2^2 = \|U^*(UWU^*)^{\ell/2} p_k(UWU^*) U\vec x\|_2^2 \\
    &= \|p_k(\tilde W)\vec e_0(\tilde W, U\vec b)\|_{\tilde W^{\ell}}^2.
\end{align*}
And so, the mimimum over $ p_k \in\mathbb P_k^{(0)}$ must be same in both cases.  So, by invarince of $W_{n,\beta,d}$ it suffices to solve
\begin{align*}
    W_{n,\beta,d} \vec x = \vec b_0 = [1,0,\ldots,0]^T.
\end{align*}
We then recall formula \eqref{eq:pks} for $p_k^\dagger(\lambda)$ with $T_k = T_k(W_{n,\beta,d}, \vec b_0)$
\begin{align*}
    \|\vec e_k\|_{W^{\ell}_{n,\beta,d}}^2 = \sum_{j=1}^n \lambda_j^{\ell -2} p_k^\dagger(\lambda_j)^2 \omega_j = \int \lambda^{\ell -2} \frac{\det (T_k - \lambda I)^2}{ \det T_k^2} \nu_{n,\beta,d} (\D \lambda)
\end{align*}
where
\begin{align}\label{eq:nu}
    \nu_{n,\beta,d} = \sum_{j=1}^n \delta_{\lambda_j} \omega_j.
\end{align}
Here the distribution of $\boldsymbol \omega$ is parameterized by (see Appendix~\ref{a:wishart})
\begin{align*}
    \boldsymbol \omega = \frac{\boldsymbol \nu}{\|\boldsymbol \nu\|_1},
\end{align*}
where $\boldsymbol \nu$ is a vector of iid $\chi_\beta$-squared random variables.  The variable $\boldsymbol{\nu}$ is the square of the first components of the eigenvectors of $W_{n,\beta,d}$.  It is well-known that the eigenvalues and eigenvectors of $W_{n,\beta,d}$ are independent.  But, $T_k$ is dependent on both the eigenvalues and eigenvectors.

\begin{lemma}\label{l:1}
For $n > 0$,
\begin{align*}
    \mathbb E \left[ \left| \int \lambda^{\ell -2} \frac{\det (T_k - \lambda I)^2}{ \det T_k^2} \nu_{n,\beta,d} (\D \lambda) - \int \lambda^{\ell -2} {\det (T_k - \lambda I)^2} \nu_{n,\beta,d} (\D \lambda) \right| \right] \leq \frac{C}{\sqrt{n}}.
\end{align*}
\end{lemma}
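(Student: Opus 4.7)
Since $\det T_k^2$ is independent of the integration variable $\lambda$, the two integrals differ by only an overall random scalar. Writing
\begin{align*}
A := \int \lambda^{\ell-2}\frac{\det(T_k - \lambda I)^2}{\det T_k^2}\,\nu_{n,\beta,d}(\D\lambda), \qquad B := \int\lambda^{\ell-2}\det(T_k-\lambda I)^2\,\nu_{n,\beta,d}(\D\lambda),
\end{align*}
we have $A = B/\det T_k^2$, hence $A - B = A(1 - \det T_k^2)$ and $|A - B| = A\,|\det T_k^2 - 1|$ because $A \geq 0$. Cauchy--Schwarz then gives
\begin{align*}
\mathbb E |A - B| \leq \bigl(\mathbb E A^2\bigr)^{1/2}\bigl(\mathbb E(\det T_k^2 - 1)^2\bigr)^{1/2},
\end{align*}
so the plan reduces to showing $\mathbb E A^2 = O(1)$ and $\mathbb E(\det T_k^2 - 1)^2 = O(1/n)$ as $n \to \infty$.

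For the determinant factor I use that, with $\vec b = \vec b_0$, the Lanczos tridiagonalization coincides with the Householder tridiagonalization (end of Section~\ref{sec:bidiag}), so $T \overset{\text{dist.}}{=} (\beta m)^{-1} HH^*$ with $H$ the lower bidiagonal matrix in \eqref{eq:Hnb}. Because $H$ is lower bidiagonal, the upper-left $k\times k$ block of $HH^*$ equals $H_kH_k^*$ with $H_k$ the upper-left $k \times k$ block of $H$, and therefore
\begin{align*}
\det T_k = \frac{|\det H_k|^2}{(\beta m)^k} = \prod_{i=1}^k \frac{\chi^2_{\beta(m-i+1)}}{\beta m} = \left(\prod_{i=1}^k\frac{m-i+1}{m}\right)\prod_{i=1}^k \frac{\chi^2_{\beta(m-i+1)}}{\beta(m-i+1)}.
\end{align*}
The deterministic prefactor equals $1 + O(1/n)$, and Lemma~\ref{l:chibound} (applied with $p = 2$) yields $\|\chi^2_{\beta(m-i+1)}/(\beta(m-i+1)) - 1\|_{L^{q}} = O(n^{-1/2})$ for every fixed $q$. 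Expanding the product of $k$ independent factors via Minkowski's inequality gives $\|\det T_k - 1\|_{L^q} = O(n^{-1/2})$, and since $\|\det T_k + 1\|_{L^q}$ is bounded uniformly in $n$,
\begin{align*}
\mathbb E(\det T_k^2 - 1)^2 \leq \|\det T_k - 1\|_{L^4}^2\,\|\det T_k + 1\|_{L^4}^2 = O(1/n).
\end{align*}

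For $\mathbb E A^2$, I use the expression $A = \sum_{j} \lambda_j^{\ell-2}p_k^\dagger(\lambda_j)^2\omega_j$ together with the factorization $p_k^\dagger(\lambda) = \prod_i(1 - \lambda/\mu_i)$, where the $\mu_i$ are the eigenvalues of the principal submatrix $T_k$ of $T$. By Cauchy interlacing, $\mu_i \in [\lambda_n, \lambda_1]$, so $|p_k^\dagger(\lambda)| \leq (1 + \kappa)^k$ for every $\lambda \in [\lambda_n, \lambda_1]$, with $\kappa = \lambda_1/\lambda_n$. Using $\sum_j \omega_j = 1$,
\begin{align*}
A \leq (1+\kappa)^{2k}\max_j \lambda_j^{\ell-2} \leq (1+\kappa)^{2k}\max\bigl(\lambda_1^{\ell-2},\lambda_n^{\ell-2}\bigr).
\end{align*}
Squaring and applying H\"older reduces $\mathbb E A^2$ to a product of positive moments of $\lambda_1$ and (when $\ell < 2$) negative moments of $\lambda_n$, both of which are bounded uniformly in $n$ for $0 < d < 1$ by \eqref{eq:largest} and Corollary~\ref{c:invbound}.

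The main obstacle is precisely this last step. The coarse estimate $|p_k^\dagger| \leq (1+\kappa)^k$ injects a high power of the condition number into $\mathbb E A^2$, which is what ties the argument to the regime $0 < d < 1$ (where $\lambda_n$ stays bounded away from $0$). When $d = 1$ the negative moments of $\lambda_n$ blow up and the above route fails; this is the structural reason Theorem~\ref{t:estimates}(3) restricts to $\ell \geq 2$, and in that regime a refined bound on $A$ bypassing $\lambda_n^{-1}$ (for instance by bounding $A$ directly via the $W^\ell$-norm and a deterministic polynomial in $\mathbb P_k^{(0)}$) is required. Granting such a bound, the Cauchy--Schwarz estimate above completes the proof with constant $C = C_{\ell,d,k}$.
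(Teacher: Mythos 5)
Your proof is correct for $0 < d < 1$ and follows the same skeleton as the paper's: factor the $\det T_k^{-2}$ out of the integrand, apply Cauchy--Schwarz, and show that the determinant factor is within $O(n^{-1/2})$ of $1$ in $L^2$ using the chi-squared moment estimates of Lemma~\ref{l:chibound}. (Your decomposition $A-B = A(1 - \det T_k^2)$ and the paper's $A-B = B(\det T_k^{-2} - 1)$ are dual but entirely equivalent.) Where you genuinely diverge is the second-moment bound on the integral: the paper expands $\lambda^{\ell-2}\det(T_k-\lambda I)^2 = \sum_j t_j\,\lambda^{j+\ell-2}$ and uses that each coefficient $t_j$ is a polynomial in the $\chi$-distributed Lanczos entries with all moments bounded, together with the eigenvalue moment bounds \eqref{eq:largest} and \eqref{eq:invbound}; you instead factor $p_k^\dagger(\lambda) = \prod_i(1 - \lambda/\mu_i)$ and invoke Cauchy interlacing to get the deterministic bound $|p_k^\dagger(\lambda)| \le (1+\kappa)^k$. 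Your route is shorter but strictly less flexible: the factor $(1+\kappa)^{2k}$ injects $\lambda_n^{-2k}$ into the moment bound regardless of the sign of $\ell-2$, so it cannot be pushed to $d=1$ even when $\ell \geq 2$. The paper's coefficient-expansion argument tracks exactly which powers of $\lambda$ appear and, for $\ell \geq 2$, sees only nonnegative powers --- which is precisely why the paper's Lemma~\ref{l:1} extends to $d=1$, $\ell\geq 2$ (as is used in the proof of Theorem~\ref{t:estimates}(3)). You identify the obstruction honestly, but note that the "refined bound bypassing $\lambda_n^{-1}$" you gesture at is exactly the paper's expansion, not an additional ingredient; it also makes your suggested Cauchy--Schwarz pairing of $A$ with $\det T_k^2 - 1$ somewhat inconvenient at $d=1$, since $\mathbb E[A^2]=\mathbb E[B^2/\det T_k^4]$ still carries a $\det T_k^{-4}$, whereas the paper's pairing of $B$ with $\det T_k^{-2}-1$ cleanly separates the two sources of randomness.
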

\begin{proof}
We begin with a simple observation
\begin{align}\label{eq:coefs}
     \lambda_i^{\ell -2} {\det (T_k - \lambda_i I)^2} = \sum_{j=0}^{2k} t_j(\alpha_1,\ldots,\alpha_k,\beta_1,\ldots,\beta_{k-1}) \lambda_i^{j+\ell-2}.
\end{align}
By Lemma~\ref{l:chibound} it follows that for $q > 0$, $\mathbb E[t_j^q] \leq C_{j,q}$, where the bound is independent of $n$.  Similarly
\begin{align*}
    \mathbb E[\lambda_i^q] \leq C_q
\end{align*}
regardless of if $q$ is positive or negative, see \eqref{eq:largest} and \eqref{eq:invbound}.  Therefore, it suffices to show that
\begin{align*}
    \frac{1}{\det T_k^2} \overset{L^2}{\to} 1.
\end{align*}
Because of \eqref{eq:Hnb} we have
\begin{align*}
    \det T_k \overset{\text{dist.}}{=} \prod_{j = 1}^{n} \frac{\chi_{\beta(m-j+1)}^2}{\beta m}
\end{align*}
where these chi-distributed random variables are independent.  Then repeated use of the identity
\begin{align}\label{eq:product}
    \frac{1}{a}\frac{1}{b} -1 = \frac{1}{a} \left( \frac 1 b -1 \right) + \left( \frac 1 a - 1\right)
\end{align}
gives
\begin{align*}
    \frac{1}{\det T_k^2} -1 = \sum_{j=1}^k \left( \frac{\beta^2 m^2}{\chi^4_{\beta(m-j+1)}} - 1 \right) \left( \prod_{i=j+1}^{k} \frac{\beta^2 m^2}{\chi^4_{\beta(m-j)}}  \right).
\end{align*}
The first term tends to zero in any $L^p$ norm by Lemma~\ref{l:chibound} at a rate $n^{-1/2}$, and the second term is bounded uniformly in any $L^p$ norm.
\end{proof}
Next, we argue that while the measure is still random, we can replace the integrand with a deterministic one.
\begin{lemma}\label{l:2}
For $n > 0$,
\begin{align*}
     \mathbb E \left[ \left| \int \lambda^{\ell -2} {\det (T_k - \lambda I)^2} \nu_{n,\beta,d} (\D \lambda) - \int \lambda^{\ell -2} {\det (\mathbb T_k - \lambda I)^2} \nu_{n,\beta,d} (\D \lambda) \right| \right] \leq \frac{C}{\sqrt{n}}.
\end{align*}
\end{lemma}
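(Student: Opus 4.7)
The plan is to expand $\det(T_k - \lambda I)^2$ and $\det(\mathbb T_{k,d} - \lambda I)^2$ as polynomials in $\lambda$, subtract, and bound each coefficient in $L^2$ separately from the spectral integral that multiplies it. Write
\begin{align*}
\det(T_k - \lambda I)^2 &= \sum_{j=0}^{2k} t_j \lambda^j, \qquad \det(\mathbb T_{k,d} - \lambda I)^2 = \sum_{j=0}^{2k} \bar t_j \lambda^j,
\end{align*}
where $t_j$ is a fixed polynomial (of degree and arity depending only on $k$) in the entries $\alpha_1,\ldots,\alpha_k,\beta_1,\ldots,\beta_{k-1}$ of $T_k$, and $\bar t_j$ is the same polynomial evaluated at the entries of $\mathbb T_{k,d}$. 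Subtracting, integrating against $\nu_{n,\beta,d}$, and applying Cauchy--Schwarz term by term gives
\begin{align*}
\mathbb E\left| \int \lambda^{\ell-2}\bigl(\det(T_k - \lambda I)^2 - \det(\mathbb T_{k,d} - \lambda I)^2\bigr)\nu_{n,\beta,d}(\D\lambda) \right| \leq \sum_{j=0}^{2k} \|t_j - \bar t_j\|_{L^2} \left\| \int \lambda^{j+\ell-2} \nu_{n,\beta,d}(\D\lambda) \right\|_{L^2}.
\end{align*}

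The second factor is uniformly bounded in $n$. Since $\omega_i \geq 0$, $\sum_i \omega_i = 1$, and $\lambda_i > 0$,
\begin{align*}
\int \lambda^p \nu_{n,\beta,d}(\D\lambda) = \sum_i \omega_i \lambda_i^p \leq \lambda_1^p + \lambda_n^p,
\end{align*}
and the $L^2$ norm of $\lambda_1^p + \lambda_n^p$ is bounded uniformly in $n$ for any fixed $p \in \mathbb Z$ by \eqref{eq:largest} and Corollary~\ref{c:invbound}.

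For the first factor, the key input is the identification from Section~\ref{sec:bidiag}: under the basic assumption $\vec x_0 = \vec 0$ and (by invariance) $\vec b = \vec b_0$, the Lanczos matrix $T$ equals $\tfrac{1}{\beta m} H_{n,\beta,d} H_{n,\beta,d}^*$. Hence each entry of $T_k$ is an explicit function of at most $2k$ independent chi variables whose degrees of freedom are all of order $n$, of the form $\chi^2_{\beta(m-j+1)}/(\beta m)$, $(\chi^2_{\beta(m-j+1)} + \chi^2_{\beta(n-j+1)})/(\beta m)$, or $\chi_{\beta(m-j+1)}\chi_{\beta(n-j)}/(\beta m)$, and by Lemma~\ref{l:chibound} each such entry has all $L^p$ norms bounded uniformly in $n$ and lies within $O(n^{-1/2})$ in $L^2$ of the corresponding entry of $\mathbb T_{k,d}$ (respectively $1$, $1+d$, or $\sqrt{d}$). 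Writing $t_j - \bar t_j$ telescopically as a sum over entries $E_i$ of $T_k$, with one factor of the form $E_i - \bar E_i$ and the remaining factors polynomial in $\{E_1,\ldots,E_M,\bar E_1,\ldots,\bar E_M\}$, Cauchy--Schwarz then yields $\|t_j - \bar t_j\|_{L^2} = O(n^{-1/2})$. Summing the $2k+1$ contributions gives the claimed bound $C/\sqrt{n}$.

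The main obstacle is the polynomial bookkeeping in the last step: one must verify that the auxiliary polynomial factors appearing in the telescoping expansion of $t_j - \bar t_j$ have $L^p$ norms controlled uniformly in $n$ for every $p$, so that Cauchy--Schwarz can absorb them without destroying the $n^{-1/2}$ rate coming from the single telescoping factor. This is the only place in the argument where the specific structure from \eqref{eq:Hnb}---the independence of the chi factors and the fact that their degrees of freedom are all comparable to $n$---is essential.
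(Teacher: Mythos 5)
Your proposal is correct and takes essentially the same approach as the paper: expand the determinant into a polynomial in $\lambda$, reduce to bounding $\|t_j - \tau_j\|_{L^2}$ by $O(n^{-1/2})$, and establish that bound via a telescoping decomposition together with the chi concentration in Lemma~\ref{l:chibound}. The only cosmetic difference is that you telescope over the entries of $T_k$ whereas the paper telescopes directly over the individual $\chi$-factors (using \eqref{eq:product}), and you spell out explicitly the Cauchy--Schwarz step and the uniform $L^2$ bound on $\int\lambda^p\,\nu_{n,\beta,d}(\D\lambda)$ that the paper leaves implicit.
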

\begin{proof}
Write $\det (\mathbb T_k - \lambda I)^2 = \sum_{j=0}^{2k} \tau_j \lambda^j$.  Using the notation of \eqref{eq:coefs}, it suffices to show that $|t_j - \tau_j| \to 0$ in $L^2$ at a rate $n^{-1/2}$.  Consider the product
\begin{align*}
    \prod_{j = 1}^k \chi_{\beta(n-d_j)}^{p_j} \chi_{\beta(m-s_j)}^{q_j}
\end{align*}
where $p_j,q_j \in \{0,1,2,3,4\}$ and $0 \leq d_j, s_j \leq k$ where the chi random variables are independent. Using \eqref{eq:product} we write
\begin{align*}
    P = \frac{\prod_{j = 1}^k \chi_{\beta(n-d_j)}^{p_j} \chi_{\beta(m-s_j)}^{q_j}}{\prod_{j = 1}^k (\beta d m)^{p_j/2}(\beta m)^{q_j/2}} - 1 
   & = \sum_{j=1}^k \left( \chi_j^{(1)} \chi_j^{(2)}- 1 \right) \prod_{i=j+1}^k \left( \chi_j^{(1)} \chi_j^{(2)}\right),
\end{align*}
where $\chi_{j}^{(1)} = \chi_{\beta(n-d_j)}^{p_j}/(\beta d m )^{p_j/2}, ~\chi_{j}^{(1)} = \chi_{\beta(n-s_j)}^{q_j}/(\beta m )^{q_j/2}$.  Then
\begin{align*}
    \mathbb E \left[ P^2 \right] \leq \sum_{j=1}^k \mathbb E \left[ \left(\chi_j^{(1)} \chi_j^{(2)} -1\right)^4  \right]^{1/2} \mathbb E \left[ \prod_{i=j+1}^k \left( \chi_j^{(1)} \chi_j^{(2)}\right)^2  \right]^{1/2}.
\end{align*}
Using \eqref{eq:product} and Lemma~\ref{l:chibound} it follows that $\mathbb E \left[ \left(\chi_j^{(1)} \chi_j^{(2)} -1\right)^4  \right]^{1/2} = O(n^{-2})$.  Then for $n > 0$
\begin{align}\label{eq:term-diff}
   \frac{1}{(\beta m)^{\frac{p+q}{2}}}\mathbb E \left[\left| \prod_{j = 1}^k \chi_{\beta(n-d_j)}^{p_j} \chi_{\beta(m-s_j)}^{q_j} - \prod_{j = 1}^k (\beta d m)^{p_j/2} (\beta m)^{q_j/2} \right|^2\right]^{1/2} = O\left( n^{-1/2}\right),
\end{align}
where $p = \sum_j p_j$ and $q = \sum_j q_j$.  This follows from again using Lemma~\ref{l:chibound}, which implies that all $L^p$ norms of $\chi_j^{(i)}$ are bounded as $n \to \infty$.  Then, one notes that the $L^2$ norm of $t_j - \tau_j$ can be bounded by a sum of terms of the form \eqref{eq:term-diff}.  This establishes this lemma.

\end{proof}

\begin{lemma}\label{l:3}
For $n > 1$,
\begin{align*}
     \mathbb E \left[ \left| \int \lambda^{\ell -2} {\det (\mathbb T_k - \lambda I)^2} \nu_{n,\beta,d} (\D \lambda) - \int \lambda^{\ell -2} {\det (\mathbb T_k - \lambda I)^2} \mu_{n,\beta,d} (\D \lambda) \right|   \right] \leq C\frac{ \log n}{\sqrt{n}}.
\end{align*}
\end{lemma}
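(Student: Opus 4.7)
The plan is to reduce the problem to the Kolmogorov--Smirnov bound of Lemma~\ref{l:KSdist} via integration by parts, after truncating to the bulk of the spectrum. Set $f(\lambda) = \lambda^{\ell-2}\det(\mathbb T_{k,d} - \lambda I)^2$, a deterministic function of $\lambda$, so that
\begin{align*}
    \int f \, \D(\nu_{n,\beta,d} - \mu_{n,\beta,d}) = \sum_{j=1}^n f(\lambda_j)\left(\omega_j - \tfrac{1}{n}\right).
\end{align*}
Fix $0 < a < (1-\sqrt{d})^2$ and $b > (1+\sqrt d)^2$, and let $E$ be the event $\{a \leq \lambda_n \leq \lambda_1 \leq b\}$. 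By Theorem~\ref{t:geb} (cf.\ \eqref{eq:deviation}), $\mathbb P(E^c) \leq C_1 \E^{-c_1 n}$. Because $a > 0$, the function $f$ is smooth on $[a,b]$, and $M := \int_a^b |f'(\lambda)| \D \lambda$ is a finite constant depending only on $k, \ell, d$.

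On the event $E$, both $\nu_{n,\beta,d}$ and $\mu_{n,\beta,d}$ are supported in $[a,b]$. Writing $G(\lambda) = \nu_{n,\beta,d}((-\infty,\lambda]) - \mu_{n,\beta,d}((-\infty,\lambda])$, Riemann--Stieltjes integration by parts gives
\begin{align*}
    \int f \, \D(\nu_{n,\beta,d} - \mu_{n,\beta,d}) = -\int_a^b f'(\lambda) G(\lambda) \D \lambda,
\end{align*}
since the boundary contributions vanish ($G(a^-) = G(b) = 0$ because both measures have total mass $1$). Hence
\begin{align*}
    \left|\int f \, \D(\nu_{n,\beta,d} - \mu_{n,\beta,d})\right| \leq M \cdot d_{\mathrm{KS}}(\nu_{n,\beta,d}, \mu_{n,\beta,d}) \quad \text{on } E.
\end{align*}
Because the Wishart distribution is invariant, the weights $\{\omega_j\}$ (squares of first components of the eigenvectors) are independent of $\{\lambda_j\}$ and have precisely the form of the weights in Lemma~\ref{l:KSdist}. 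Conditioning on $\{\lambda_j\}$, that lemma yields $\mathbb E[d_{\mathrm{KS}}(\nu_{n,\beta,d},\mu_{n,\beta,d})] \leq C' \log n / \sqrt n$ uniformly in the (ordered) eigenvalues, so the contribution from $E$ to the expectation is $O(\log n / \sqrt n)$.

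For the complementary event, Cauchy--Schwarz gives
\begin{align*}
    \mathbb E\left[\left|\int f \, \D(\nu_{n,\beta,d} - \mu_{n,\beta,d})\right| \mathbbm{1}_{E^c} \right] \leq \mathbb E\left[\left|\int f \, \D(\nu_{n,\beta,d} - \mu_{n,\beta,d})\right|^2 \right]^{1/2} \mathbb P(E^c)^{1/2}.
\end{align*}
The $L^2$ norm is controlled by $|\omega_j - 1/n| \leq 1$ together with the moment bounds $\mathbb E[\lambda_1^p] \leq C_p$ and $\mathbb E[\lambda_n^{-p}] \leq C_p$ supplied by \eqref{eq:largest} and Corollary~\ref{c:invbound}, yielding at most polynomial growth in $n$. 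Multiplied by $\mathbb P(E^c)^{1/2} \leq C_1^{1/2} \E^{-c_1 n/2}$, this contribution is exponentially small and is absorbed into the final bound.

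The main technical point is that when $\ell < 2$ the function $f$ has a pole at the origin, so the hard truncation $\lambda_n \geq a > 0$ is essential: it is both what makes $\|f'\|_{L^1[a,b]}$ finite on $E$ and what forces us to invoke the negative-moment bound of Corollary~\ref{c:invbound} (rather than only the positive-moment bound \eqref{eq:largest}) when estimating the $L^2$ norm on $E^c$. Everything else is a routine combination of invariance, integration by parts, and the Kolmogorov--Smirnov control already established.
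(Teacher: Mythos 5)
Your proof is correct and its core mechanism is the same as the paper's: integration by parts turns the difference of integrals into a bound of the form $(\text{variation of } f)\cdot d_{\mathrm{KS}}(\nu_{n,\beta,d},\mu_{n,\beta,d})$, and Lemma~\ref{l:KSdist} supplies the $\log n/\sqrt n$ rate. Where you diverge is in handling the factor involving $\int|f'|$, which is random because the interval $[\lambda_n,\lambda_1]$ is random. The paper notes that this factor depends only on the eigenvalues, while the KS distance $d_{\mathrm{KS}}(\nu,\mu)=\max_k\bigl|\sum_{j\le k}(\omega_j-1/n)\bigr|$ depends only on the weights (eigenvectors), so by independence the expectation of the product \emph{factors}: $\mathbb E[|I(f)|]\le\mathbb E[\int_{\lambda_n}^{\lambda_1}|f'|]\,\mathbb E[d_{\mathrm{KS}}]$, and the first factor is bounded uniformly in $n$ by the moment bounds \eqref{eq:largest} and \eqref{eq:invbound}. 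You instead truncate to the bulk event $E=\{a\le\lambda_n\le\lambda_1\le b\}$, where $\int_a^b|f'|$ is a deterministic constant $M$, and dispose of $E^c$ by Cauchy--Schwarz plus the exponential tail from Theorem~\ref{t:geb}. Both are valid. The paper's route is a little slicker — one line of independence replaces your event decomposition and second-moment estimate — but your version is arguably more robust, since it does not rely on the exact factorization of the expectation, only on the eigenvalue tail bounds and a uniform-in-eigenvalues reading of Lemma~\ref{l:KSdist} (which is legitimate, since that lemma is stated for an arbitrary deterministic point configuration). One small point: you could streamline the $E^c$ estimate by observing $\sum_j|\omega_j-1/n|\le 2$, giving $|I(f)|\le 2\max_j|f(\lambda_j)|\le C(\lambda_n^{-p}+\lambda_1^q)$, whose second moment is bounded directly by \eqref{eq:largest} and Corollary~\ref{c:invbound} with no $n$-dependence at all.
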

\begin{proof}
Write $f(\lambda) = \lambda^{\ell -2} {\det (\mathbb T_k - \lambda I)^2}$ and integrate by parts
\begin{align*}
    I(f): = \int f(\lambda) &(\nu_{n,\beta,d} (\D \lambda) - \mu_{n,\beta,d} (\D \lambda)) =  \int_{\lambda_n}^{\lambda_1} f(\lambda) (\nu_{n,\beta,d} (\D \lambda) - \mu_{n,\beta,d} (\D \lambda))\\
    &=  \int_{\lambda_n}^{\lambda_1} f'(\lambda) F_{\beta,d}(x) \D \lambda
\end{align*}
where $F_{\beta,d}(x) = \mu_{n,\beta,d}((-\infty,x]) -\nu_{n,\beta,d}((-\infty,x]) $. Therefore
\begin{align*}
    |I(f)| \leq \left(\int_{\lambda_n}^{\lambda_1} |f'(\lambda)| \D \lambda \right) d_{\mathrm{KS}}(\mu_{n,\beta,d},\nu_{n,\beta,d}).
\end{align*}
Therefore
\begin{align*}
    \mathbb E \left[ |I(f)| \right] = \mathbb E \left[ \int_{\lambda_n}^{\lambda_1} |f'(\lambda)| \D \lambda \right] \mathbb E \left[d_{\mathrm{KS}}(\mu_{n,\beta,d},\nu_{n,\beta,d}) \right],
\end{align*}
by the independence of eigenvalues and eigenvectors ($d_{\mathrm{KS}}(\mu_{n,\beta,d},\nu_{n,\beta,d})$ is independent of the eigenvectors).  Then, we just note that there exists power $p,q \geq 0$ such that
\begin{align*}
    |f'(\lambda)| \leq C_k ( \lambda^{-p} + \lambda^q),
\end{align*}
and therefore
\begin{align*}
    \mathbb E \left[ \int_{\lambda_n}^{\lambda_1} |f'(\lambda)| \D \lambda \right]
\end{align*}
is bounded uniformly in $n$ by \eqref{eq:largest} and \eqref{eq:invbound}.  The lemma follows from Lemma~\ref{l:KSdist}.
\end{proof}

These three lemmas combined with Lemma~\ref{l:powers} establishes the Theorem~\ref{t:estimates}(1).

For the second part, we again establish a series of lemmas.

\begin{lemma}\label{l:4}
For $n \geq 0$
\begin{align*}
\mathbb P \left( \left| \int \lambda^{\ell -2} \frac{\det (T_k - \lambda I)^2}{ \det T_k^2} \nu_{n,\beta,d} (\D \lambda) - \int \lambda^{\ell -2} {\det (\mathbb T_k - \lambda I)^2} \nu_{n,\beta,d} (\D \lambda)  \right| \geq t \right) \\\leq C \E^{-c g(t) n}.
\end{align*}
for a non-decreasing function $g(t)$ that satisfies $g(t) > 0$ for $t > 0$, and for some constant $C > 0$.
\end{lemma}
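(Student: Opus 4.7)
The plan is to use the algebraic identity $\tfrac{A}{B} - C = \tfrac{1}{B}(A-C) + C(\tfrac{1}{B}-1)$ pointwise in $\lambda$ with $A = \det(T_k - \lambda I)^2$, $B = \det T_k^2$, and $C = \det(\mathbb T_k - \lambda I)^2$. This writes the difference inside the probability as
\begin{align*}
D = \frac{D_1}{\det T_k^2} + D_2\left(\frac{1}{\det T_k^2}-1\right),
\end{align*}
where $D_1 = \int \lambda^{\ell-2}[\det(T_k-\lambda I)^2 - \det(\mathbb T_k - \lambda I)^2]\,\nu_{n,\beta,d}(\D\lambda)$ and $D_2 = \int \lambda^{\ell-2}\det(\mathbb T_k-\lambda I)^2\,\nu_{n,\beta,d}(\D\lambda)$. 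Two applications of Lemma~\ref{l:3facts}(2) and (3) will then reduce the tail of $|D|$ to tails of the four factors $|D_1|$, $\det T_k^{-2}$ (on a good event where it is close to $1$), $|D_2|$, and $|1/\det T_k^2 - 1|$.

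Next, I would control the denominator. By \eqref{eq:Hnb}, $\det T_k^{-2} - 1$ is expressible via iterated use of the identity \eqref{eq:product} as a finite sum whose generic summand is $\bigl((\beta m)^2/\chi^4_{\beta(m-j+1)} - 1\bigr)\prod_{i>j}(\beta m)^2/\chi^4_{\beta(m-i+1)}$. Lemma~\ref{l:chibound} supplies sub-exponentially decaying tails for each single-chi factor and uniform $L^p$ bounds, independent of $n$, for the remaining normalized chi ratios, so iterating Lemma~\ref{l:3facts}(3) yields $\mathbb P(|1/\det T_k^2 - 1|\ge t) \le C\E^{-c n g_1(t)}$ for some $g_1$ positive on $(0,\infty)$. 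On the complementary event $\{|1/\det T_k^2 -1|<1\}$, the factor $\det T_k^{-2}$ is uniformly bounded, which is all that is needed in the first term. For the numerator, I would expand $D_1 = \sum_{j=0}^{2k}(t_j-\tau_j)\int \lambda^{j+\ell-2}\nu_{n,\beta,d}(\D\lambda)$, and the argument in Lemma~\ref{l:2} (again iterating \eqref{eq:product}) expresses each $|t_j-\tau_j|$ as a finite sum of products, each containing a factor $\chi_r^p/r^{p/2} - 1$ with sub-exponential tails (Lemma~\ref{l:chibound}) times factors bounded in every $L^p$. Hence each coefficient difference has exponentially decaying tails in $n$. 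The integrals $\int \lambda^{j+\ell-2}\nu_{n,\beta,d}(\D\lambda)$ are dominated by $\max(\lambda_1^{j+\ell-2},\lambda_n^{j+\ell-2})$ since the weights sum to unity, and these extremes have exponential tails by Theorem~\ref{t:geb} and Lemma~\ref{l:marginal_bound}. The same ingredients bound the tails of $|D_2|$.

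Combining all factor-by-factor bounds via Lemma~\ref{l:3facts} will then produce the claimed estimate with $g(t)$ equal to a minimum of the individual tail functions encountered. The main obstacle is the bookkeeping of how the threshold $t$ degrades at every application of Lemma~\ref{l:3facts}(3): each split replaces $t$ by $t^{1/2}$ in the resulting tails, and the expansions above involve products of bounded depth (depending on $k$ and $\ell$), so $g(t)$ will be a piecewise power of $t$ with exponents strictly positive for $t>0$ but worse than the Bernstein-type $\min\{t^2,t\}$ one gets for a single centered chi. Since the statement only requires $g$ non-decreasing and positive on $(0,\infty)$, these degraded exponents are acceptable, and the non-decreasing property is automatic from the construction.
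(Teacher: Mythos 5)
Your decomposition and the paper's proof pull on the same ingredients (concentration of the normalized $\chi$-entries of $H_{n,\beta,d}/\sqrt{\beta m}$ around $\mathbb H_d$ via Lemma~\ref{l:chibound}, together with control of the extreme eigenvalues), but the presentations differ. The paper avoids the explicit iteration of \eqref{eq:product}: it observes that the coefficients of $\lambda^{\ell-2}\det(T_k-\lambda I)^2/\det T_k^2$, viewed as the composite function $\tau_j(H_{n,\beta,d}/\sqrt{\beta m})$, are Lipschitz in a closed max-norm neighborhood of $\mathbb H_d$ (where $\det T_k$ is bounded away from $0$), and then works on the intersection of the two exponentially likely events $Z_d(t)$ (matrix entries near $\mathbb H_d$) and $\Lambda_d(C)$ (spectrum inside $[C^{-1},C]$). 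Your route expands the coefficient differences by hand and is essentially a concrete instantiation of the same estimate, so the approaches are equivalent in substance, with the Lipschitz framing buying cleaner bookkeeping.

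There is, however, a genuine issue in the way you propose to combine tails. Iterating Lemma~\ref{l:3facts}(3) on a product $|Z_1||Z_2|$ gives tails at level $t^{1/2}$ for each factor; this is fine when both factors are small with high probability, but in several of your products one factor ($|D_2|$, the moments $\int\lambda^{j+\ell-2}\nu_{n,\beta,d}(\D\lambda)$, the "remaining" $\chi$ ratios) is only \emph{bounded} with high probability, not small. For $t^{1/2}$ below that bound, $\mathbb P(|Z_2|\ge t^{1/2})$ does not decay in $n$, so the naive split gives a trivial estimate for all $t$ below a threshold, and $g(t)>0$ for all $t>0$ fails. The correct step is the one you already flag for $\det T_k^{-2}$: intersect with a good event where the bounded factor is $\le M$ deterministically (exponentially likely, e.g. $\Lambda_d(C)$ for the $\lambda$-moments and the event $\{|1/\det T_k^2-1|<1\}$ for the denominator), and then bound $|Z_1 Z_2|\ge t$ by $\mathbb P(\text{good event}^c)+\mathbb P(|Z_1|\ge t/M)$. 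Applying this consistently to $D_2$, to the integrals inside $D_1$, and to the residual $\chi$-ratio products closes the gap and recovers exactly the structure of the paper's argument.
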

\begin{proof}
For $t \geq 0$, let $\Lambda_d(C)$ be the event on which $C^{-1} \leq \lambda_n \leq \lambda_1 \leq C$ for $C > (1+ \sqrt{d})^{2}$ and $1/C < (1 - \sqrt{d})^{-2}$.  Then
\begin{align*}
    \mathbb P(\Lambda_d(C)) \geq 1 - 2 \E^{-n g_d(C)}
\end{align*}
where $g_d(C) > 0$.  This follows from \eqref{eq:deviation}.  Now, we make two elementary observations about
\begin{align}
     \lambda^{\ell -2} \frac{\det (T_k - \lambda I)^2}{\det T_k^2} = \sum_{j=0}^{2k} \tau_j \lambda^{j+\ell-2}.
\end{align}
Recall \eqref{eq:H} and it follows that $\tau_j = \tau_j(H_{n,\beta,d}/\sqrt{\beta m})$ is a Lipschitz function of the entries $(h_{ij})_{i\geq j}$ of $H_{n,\beta,d}/\sqrt{\beta m}$ in any closed $\epsilon$-neighborhood $0< \epsilon < 1$ of $\mathbb H_d$ in the max norm\footnote{The max norm gives the maximum entry, in modulus.} on lower-triangular matrices.  Let $L_{\epsilon,j}$ be the Lipschitz constant. The second observation is to let $Z_d(t)$ be the event where
\begin{align*}
    \max \left| \mathbb H_d - \frac{H_{n,\beta,d}}{\sqrt{\beta m}} \right| \leq |t|.
\end{align*}
By Lemma~\ref{l:chibound}, for $0 < t \leq \epsilon  \leq 1$,  $\mathbb P(Z_d(t)) \geq 1 - C_{k,d} \E^{-n c_{k,d}} $ for some constants $C_{k,d},c_{k,d} > 0$.  Therefore
\begin{align*}
    1 - &C_{k,c} \E^{- c_{k,d}n} - 2 \E^{-n g_d(C)} \leq \mathbb P(Z_d(t),\Lambda_d(C)) \\
    &\leq \mathbb P \left(   \sum_{j=0}^k |\tau_j(H_{n,\beta,d}/\sqrt{\beta m})- \tau_j (\mathbb H_d)| \leq |t|C^{2k}\sum_{j=0}^{2k} L_{t,j} \right)
\end{align*}
The lemma follows.
\end{proof}

\begin{lemma}\label{l:5}
For $n \geq 0$
\begin{align*}
\mathbb P \left( \left| \int \lambda^{\ell -2} {\det (\mathbb T_k - \lambda I)^2} \nu_{n,\beta,d} (\D \lambda) -\int \lambda^{\ell -2} {\det (\mathbb T_k - \lambda I)^2} \mu_{n,\beta,d} (\D \lambda) \right|   \geq t \right) \leq C \E^{-c g(t) n}.
\end{align*}
\end{lemma}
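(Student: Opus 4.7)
The plan is to follow the strategy of Lemma~\ref{l:3} but promote the $L^1$ bound to a tail estimate by combining a deterministic bound on the integrand with the tail estimate \eqref{eq:KStail} for the Kolmogorov--Smirnov distance. Write $f(\lambda) = \lambda^{\ell-2}\det(\mathbb T_{k,d} - \lambda I)^2$, which is a fixed, deterministic function (independent of $n$). Integration by parts, as in Lemma~\ref{l:3}, yields
\begin{align*}
\left| \int f(\lambda)(\nu_{n,\beta,d}-\mu_{n,\beta,d})(\D\lambda) \right| \leq \left( \int_{\lambda_n}^{\lambda_1} |f'(\lambda)|\,\D\lambda \right) d_{\mathrm{KS}}(\mu_{n,\beta,d},\nu_{n,\beta,d}).
\end{align*}

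Next I would localize the spectrum. Fix $C > (1-\sqrt{d})^{-2}$ and let $\Lambda_d(C)$ be the event $\{C^{-1} \leq \lambda_n \leq \lambda_1 \leq C\}$. By \eqref{eq:deviation}, $\mathbb P(\Lambda_d(C)^c) \leq 2\E^{-n\gamma_d(C)}$ with $\gamma_d(C)>0$. On $\Lambda_d(C)$, since $|f'(\lambda)| \leq C_k(\lambda^{-p}+\lambda^q)$ for some $p,q\geq 0$, there is a deterministic constant $M_{k,\ell,d,C}$ such that $\int_{\lambda_n}^{\lambda_1}|f'(\lambda)|\,\D\lambda \leq M_{k,\ell,d,C}$. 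Thus
\begin{align*}
\mathbb P\bigl(|I(f)|\geq t\bigr) \leq 2\E^{-n\gamma_d(C)} + \mathbb P\!\left( d_{\mathrm{KS}}(\mu_{n,\beta,d},\nu_{n,\beta,d}) \geq \tfrac{t}{M_{k,\ell,d,C}}\right).
\end{align*}

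Now I would apply the tail bound \eqref{eq:KStail} from Lemma~\ref{l:KSdist}. The point is that Lemma~\ref{l:KSdist} holds uniformly in the choice of ordered numbers $\lambda_{j,n}$; since the eigenvectors of $W_{n,\beta,d}$ (hence the weights $\omega_j$) are independent of the eigenvalues, one can first condition on $\{\lambda_j\}$, apply \eqref{eq:KStail}, and then take expectations, obtaining
\begin{align*}
\mathbb P\!\left( d_{\mathrm{KS}}(\mu_{n,\beta,d},\nu_{n,\beta,d}) \geq s \right) \leq C_1 n\E^{-c_1 n\beta s^2} + C_2 n \E^{-c_2 n\beta s}, \quad 0\leq s\leq 1.
\end{align*}
Setting $s = t/M_{k,\ell,d,C}$ and noting the bound is trivial for $t\geq M_{k,\ell,d,C}$, I combine the two pieces: the factor of $n$ can be absorbed into the exponential by enlarging the constants slightly (losing a $\log n$ that can be absorbed in the decay rate for small $t$). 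This produces the claimed estimate with $g(t) = \min\{t^2,t\}$ up to constants depending on $k,\ell,d$; $g$ is non-decreasing and strictly positive for $t>0$.

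The main obstacle, and really the only delicate point, is the appeal to independence of eigenvalues and eigenvectors when invoking Lemma~\ref{l:KSdist}, since the weights $\omega_j$ depend on the first components of the eigenvectors through the representation described after \eqref{eq:Hnb}; once this independence is used to condition on the spectrum, the uniformity of \eqref{eq:KStail} in the $\lambda_{j,n}$ does all the work, and the rest is bookkeeping of the constants $C,c$ depending on $k,\ell,d$.
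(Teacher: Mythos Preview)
Your proposal is correct and follows essentially the same route as the paper: localize the spectrum via the event $\Lambda_d(C)$, bound the difference by a constant times $d_{\mathrm{KS}}(\mu_{n,\beta,d},\nu_{n,\beta,d})$ using integration by parts, and then invoke the tail estimate \eqref{eq:KStail}. Your extra care in justifying the application of Lemma~\ref{l:KSdist} via conditioning on the eigenvalues (using independence of eigenvalues and eigenvectors) is a nice touch that the paper leaves implicit. One small caveat: your remark that the prefactor $n$ from \eqref{eq:KStail} ``can be absorbed into the exponential'' is not quite uniform in $t$ --- in fact the paper itself carries that factor of $n$ through to the statement of Theorem~\ref{t:estimates}(2), so you should simply retain it rather than try to absorb it.
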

\begin{proof}
Recalling the notation $\Lambda_d(C)$ of the proof of the previous lemma, we then define the event
\begin{align*}
    K_d(t) = \left\{ d_{\mathrm{KS}}(\mu_{n,\beta,d},\nu_{n,\beta,d}) \geq t \right\}.
\end{align*}
Using the notation of \eqref{eq:KStail}
\begin{align*}
    1 - C_1 n e^{-c_1 n \beta t^2} -  C_2 n e^{-c_2 n \beta t} \leq \mathbb P\left( K_d(t)\right).
\end{align*}
Then
\begin{align*}
   1 - C_1 n e^{-c_1 n \beta t^2} -  C_2 n e^{-c_2 n \beta t} - 2 \E^{-n g_d(C)} \leq  \mathbb P \left( \Lambda_d(C), K_d(t)\right)
\end{align*}
and then we find for a constant $C_k > 0$
\begin{align*}
    \mathbb P \left( \Lambda_d(C), K_d(t)\right) \leq \mathbb P \left( \sup_{\lambda \in [C^{-1},C]} \left| \frac{\D}{\D \lambda} \lambda^{\ell -2} \det (\mathbb T_k - \lambda I)^2\right| d_{\mathrm{KS}}(\mu_{n,\beta,d},\nu_{n,\beta,d})  \leq C_k t \right).
\end{align*}
Therefore
\begin{align*}
    1 - C_1 n e^{-c_1 n \beta t^2} -  C_2 n e^{-c_2 n \beta t} - 2 \E^{-n g_d(C)} \\
    \leq \mathbb P \left( \left| \int \lambda^{\ell -2} {\det (\mathbb T_k - \lambda I)^2} \nu_{n,\beta,d} (\D \lambda) -\int \lambda^{\ell -2} {\det (\mathbb T_k - \lambda I)^2} \mu_{n,\beta,d} (\D \lambda) \right|   \leq C_k t \right)
\end{align*}
and this establishes the lemma.
\end{proof}
Applying Corollary~\ref{cor:dev}  establish along with these two lemmas establishes Theorem~\ref{t:estimates}(2).

For the case of $d = 1$ and $\ell \geq 2$ no inverse powers of $\lambda$ will be encountered in any integral.  So, the fact that Lemma~\ref{l:powers} applies only for $k \geq 0$ is not an issue.   Theorem \ref{t:geb} holds for $d= 1$, Theorem \ref{t:lip} indeed holds for $d = 1$ and Corollary~\ref{cor:dev} holds for $k = 1$ provided the function $f$ is Lipschitz continuous at $\lambda = 0$.  And Lemmas~\ref{l:1}, \ref{l:2}, \ref{l:3}, \ref{l:4} and \ref{l:5} hold for $d =1 $ provided $\ell \geq 2$.
\end{proof}

\begin{proof}[Proof of Theorem~\ref{t:ek}]
To evaluate 
\begin{align*}
    \mathfrak e^2_{\ell,k,d} : = \int \lambda^{\ell -2} \det(\mathbb T_{k,d} - \lambda I)^2 \mu_{\mathrm{MP},d}(\D \lambda)
\end{align*}
we make a simple change of variable $ \lambda =  \frac{d_+ - d_-}{2} x +  \frac{d_+ + d_-}{2} = 2 x \sqrt{d} + 1 + d$ so that
\begin{align}\label{eq:mfe}
    \mathfrak e_{\ell,k,d} : = \frac{2}{\pi} \int_{-1}^1 (2 x \sqrt{d} + 1 + d)^{\ell-3}  \det(\mathbb T_{k,d} - (2 x \sqrt{d} + 1 + d) I)^2 \sqrt{1-x^2}\, \D x.
\end{align}
Then examine \newcommand{\half}{\frac{1}{2}}
\begin{align*}
    \frac{1}{\sqrt{d}} \left( \mathbb T_{k,d} - (2 x \sqrt{d} + 1 + d) I \right)= \begin{bmatrix} - \sqrt{d} -  2x & 1 \\
    1 & -2x & \ddots \\
    & \ddots & \ddots & 1  \\
    & & 1 & -2x \end{bmatrix} =: D_{k,d}(-x).
\end{align*}
Next, we define $\det D_{0,d}(x) = 1$ and compute
\begin{align}\label{eq:recur}
  \begin{split}
    \det D_{1,d}(x) &= 2x  - \sqrt{d} ,\\
    \det D_{k+1,d}(x) &+  \det D_{k-1,d}(x) = 2x \det D_{k,d}(x), \quad k \geq 1.
  \end{split}
\end{align}

Note that \eqref{eq:recur} is the recurrence relation for the Chebyshev polynomials $T_n$ and $U_n$ of the first and second kinds.  We need some elementary properties of $T_n$ and $U_n$ (see, e.g. \cite{Mason2003}):
\begin{align*}
    U_n(\cos \theta) &= \frac{\sin (n+1) \theta}{\sin \theta}, \quad T_n(\cos \theta) = \cos n \theta,\\
    \frac{2}{\pi}&\int_{-1}^1 U_j(x) U_k(x) \sqrt{1-x^2}\, \D x = \delta_{jk},\\
    U_j(x) U_k(x) &= \sum_{\ell = 0}^{\min\{j,k\}} U_{|j-k| + 2 \ell}(x),\\
    \frac{1}{x+a} &= \frac{2}{\sqrt{a^2-1}} {\sum_{j=0}^\infty}' ( a - \sqrt{a^2 -1} )^j T_j(-x),
\end{align*}
where the $'$ denotes that the $j = 0$ term is halved. From the last equality it follows by differentiation that
\begin{align*}
    \frac{1}{(x+a)^2} &= \frac{2}{\sqrt{a^2-1}} \sum_{j=1}^\infty ( a - \sqrt{a^2 -1} )^j jU_{j-1}(-x).
\end{align*}
Recalling \eqref{eq:mfe} with $\ell = 1$ we have
\begin{align*}
    \frac{1}{(2x\sqrt{d}+d +1)^2} &= \frac{1}{\sqrt{d}(1-d)} \sum_{j=0}^\infty d^{j/2+1/2} (j+1) U_{j}(-x).
\end{align*}
Matching initial conditions for $D_{k,d}$ at $k = 1$ and $k = 2$ we obtain
\begin{align*}
    \det \left( \mathbb T_{k,d} - (2 x \sqrt{d} + 1 + d) I \right) = d^{k/2} \left[ U_k(-x) - \sqrt{d} U_{k-1}(-x) \right].
\end{align*}
Therefore
\begin{align*}
    &\frac{2}{\pi} \int_{-1}^1\frac{1}{(2x\sqrt{d}+d +1)^2}(U_k(-x) - \sqrt{d} U_{k-1}(-x))^2 \sqrt{1-x^2}\, \D x\\
    &= \frac{1}{\sqrt{d}(1-d)} \left( (2n +1) \sqrt{d}^{2n+1} + \sum_{k=0}^{n-1} \left[ (1+d)(2k+1) - 2d (2k+2) \right] \sqrt{d}^{2k+1} \right)
\end{align*}
Continuing,
\begin{align*}
    &\sum_{k=0}^{n-1} \left[ (1+d)(2k+1) - 2d (2k+2) \right] \sqrt{d}^{2k+1} =  \sqrt{d}\sum_{k=0}^{n-1} \left[ (1-d)(2k+1) - 2d \right] d^k\\
    &= \sqrt{d}(1-3d)\sum_{k=0}^{n-1}d^k + 2(1-d) d^{3/2}\sum_{k=1}^{n-1} k d^{k-1} \\
    & = \sqrt{d}(1-3d)\frac{1-d^n}{1-d}+ 2(1-d) d^{3/2}\frac{-n d^{n-1} (1-d) + 1 - d^n}{(1-d)^2}\\
    & = \sqrt{d}(1-3d)\frac{1-d^n}{1-d}+ 2 d^{3/2}\frac{n d^{n} - n d^{n-1} + 1 - d^n}{1-d}\\
    & = \sqrt{d}(1-d)\frac{1-d^n}{1-d}+ 2 d^{3/2}\frac{n d^{n} - n d^{n-1}}{1-d}\\
    & = \sqrt{d}(1-d^n)+ 2 d^{3/2}\frac{n d^{n-1}(d-1)}{1-d}\\
    & = \sqrt{d} - (2n+1) d^{n + 1/2}
\end{align*}
and this gives
\begin{align*}
    \mathfrak e^2_{1,k,d} = \frac{d^k}{1-d}.
\end{align*}
For $\mathfrak e_{2,k,d}$, we use $T_k(x) = \half U_k(x) - \half U_{k-2}(x)$ for $k \geq 1$ and $U_0(x) = T_0(x)$ to find
\begin{align*}
    \frac{1}{2x \sqrt{d} + 1 + d} & = \frac{1}{1-d}  {\sum_{j=0}^\infty} d^{j/2}\left[ U_j(-x) - U_{j-2}(-x) \right]\\
    & =  \sum_{j = 0}^\infty d^{j/2}  U_j(-x).
\end{align*}
Then
\begin{align*}
    &\frac{2}{\pi} \int_{-1}^1 \frac{1}{2x \sqrt{d} + 1 + d}(U_k(-x) - \sqrt{d} U_{k-1}(-x))^2 \sqrt{1-x^2}\, \D x\\
    & = d^k + (1+d) \sum_{j=0}^{k-1}d^j - 2d \sum_{j=0}^{k-1} d^{j}\\
    & = d^k + (1-d) \sum_{j=1}^{k-1}d^j\\
    & = d^k + 1-d^k\\
    & = 1.
\end{align*}
And this gives
\begin{align*}
    \mathfrak e^2_{2,k,d} = d^k.
\end{align*}
For $\mathfrak e_{3,k,d}$ we find
\begin{align*}
    &\frac{2}{\pi} \int_{-1}^1 (U_k(-x) - \sqrt{d} U_{k-1}(-x))^2 \sqrt{1-x^2}\, \D x = \begin{cases} 1 + d & k \geq 1,\\ 1 & k = 0. \end{cases}
\end{align*}
and this gives
\begin{align*}
    \mathfrak e^2_{3,k,d} = d^k \begin{cases} 1 + d & k \geq 1,\\ 1 & k = 0. \end{cases}
\end{align*}
Lastly, one can use the bound $|U_k(x)| \leq k$ to see that $\mathfrak e_{l,k,d} \to 0$ as $k \to \infty$ provided $0 < d < 1$.

\end{proof}

\appendix

\section{The eigenvalues and eigenvectors of Wishart matrices}\label{a:wishart}

Let $W = W_{n,\beta,d} = U \Lambda U^*$, $U^*U  = I$.  It is an important fact that the joint distribution of the vector
\begin{align*}
    \boldsymbol{\omega} = \begin{bmatrix}|U_{11}|^2 \\ \vdots \\ |U_{1n}|^2 \end{bmatrix}, \quad U = (U_{ij})_{1 \leq i, j \leq n}
\end{align*}
can be parameterized by
\begin{align}\label{eq:paramit}
   \boldsymbol{\omega} \overset{\text{dist.}}{=} \frac{\boldsymbol{\nu}}{\|\boldsymbol{\nu}\|}_1
\end{align}
where $\boldsymbol{\nu}$ is a vector of iid $\chi_\beta^2$ random variables. For the convenience of the reader we now derive \eqref{eq:paramit}.
\begin{definition}
$O(n)$ (resp., $U(n)$) denotes the group of $n \times n$ orthogonal (resp., unitary) matrices.
\end{definition}

We recall some general facts about Haar measure (see, e.g., \cite{Folland}).
\begin{theorem}
Let $G$ be a locally compact Hausdorff topological group.  Then $G$ has a left invariant measure $\mu$ (i.e., a left Haar measure) and an right invariant measure $\nu$ (i.e., a right Haar measure) on the $\sigma$-algebra generated by all open subsets of $G$.  The measures are unique up to a positive multiplicative constant.  
\end{theorem}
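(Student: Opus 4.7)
The plan is to prove existence and uniqueness separately, using the now-classical constructions due to Haar and von Neumann.

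For existence, I would follow Haar's covering-number approach. Fix a compact reference set $K_0$ with nonempty interior. For any compact $K \subset G$ and any open neighborhood $V$ of the identity, let $(K:V)$ denote the minimum number of left translates of $V$ needed to cover $K$; this is finite by compactness. Define the normalized ratio
\[
\mu_V(K) := \frac{(K:V)}{(K_0:V)}.
\]
Elementary manipulations show that $\mu_V$ is left invariant, monotone, subadditive, and uniformly bounded by $(K:K_0)$ as $V$ varies. I would then pass to the limit as $V$ shrinks to $\{e\}$ along the directed set of open neighborhoods of the identity. Regarding $V \mapsto \mu_V$ as a net in the compact product space $\prod_K [0,(K:K_0)]$, Tychonoff's theorem furnishes a convergent subnet whose limit $\mu$ is a left-invariant ``content'' on compact sets. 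An application of the Riesz representation theorem (or equivalently, Carath\'eodory extension from the inner-regular premeasure induced by $\mu$) then yields a left-invariant Borel measure. A symmetric argument with right translates produces a right Haar measure $\nu$.

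For uniqueness up to a positive multiplicative constant, I would use a Fubini-style averaging argument. Given two left Haar measures $\mu_1, \mu_2$ and test functions $f, g \in C_c(G)$ with $\int g\, \D \mu_1 > 0$, consider the double integral
\[
\int \int f(x)\, g(y^{-1} x)\, \D \mu_2(x)\, \D \mu_1(y),
\]
and evaluate it in both orders using left invariance and the substitution $x \mapsto yx$. This identifies $\int f\, \D \mu_2$ with a multiple of $\int f\, \D \mu_1$ by a factor that depends only on $g$. Since that factor is independent of $f$, the two measures agree up to a positive scalar on all of $C_c(G)$, hence as Borel measures.

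The main obstacle is in the existence step: verifying that the Tychonoff limit of the approximating contents $\mu_V$ really defines a well-behaved content (not merely a positive linear functional), and that this content extends \emph{uniquely} to a countably additive, inner-regular Borel measure. The book-keeping with outer and inner regularity is delicate, especially when $G$ is not $\sigma$-compact. This is worked out in full detail in the Folland reference cited immediately before the theorem statement; I would defer to that exposition rather than reconstruct the machinery here, since the sequel only uses the existence and uniqueness conclusion applied to $O(n)$ and $U(n)$, which are compact and well-behaved.
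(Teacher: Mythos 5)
The paper does not actually prove this theorem: it is stated as a recalled background fact, with the proof deferred entirely to the Folland reference cited just before the statement. Your sketch of the classical Haar--Weil argument (covering numbers $\mu_V(K) = (K:V)/(K_0:V)$ compactified via Tychonoff for existence, and the Fubini averaging identity for uniqueness) is a correct outline of what Folland proves, and you are right that the delicate part is verifying the Tychonoff limit yields a genuine content that extends to a regular Borel measure, particularly without $\sigma$-compactness. Since both you and the paper ultimately defer these details to Folland, the two approaches agree, and for the purposes of the sequel only the conclusion for the compact groups $O(n)$ and $U(n)$ is used, where everything is automatic.
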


For a Borel set $S$, let $S^{-1}$ be the set of inverses of $S$.  Define
\begin{align*}
    \mu_{-1}(S) := \mu(S^{-1}).
\end{align*}
Then it is easy to see that $\mu_{-1}$ is a right Haar measure.  Thus by uniqueness,
\begin{align}\label{eq:propto}
\mu(S^{-1}) = k \nu(S)
\end{align}
for some $k > 0$.   Now, on the other hand, the left translate of a right invariant measure is still right invariant.  Thus, for all $g$, by uniqueness,
\begin{align}\label{eq:translate}
    \nu(g^{-1} S) = \Delta(g) \nu(S),
\end{align}
for some positive scaling factor $\Delta(g)$ --- the modular function.  $\Delta(g)$ is a continuous group homomorphism into the multiplicative group of positive numbers.  A group is called \underline{unimodular} if $\Delta(g) \equiv 1$. Clearly, it follows from \eqref{eq:translate} that $G$ is unimodular if and only if Haar measure is both left and right invariant, i.e., $\nu(S) = k' \mu(S)$, $k' > 0$.  There are many examples of unimodular groups:  most importantly for us, compact groups, (such as $O(n)$ or $U(n)$) are unimodular.

If $G$ is unimodular, it follows from \eqref{eq:propto} that
\begin{align}\label{eq:findk}
    \mu(S^{-1}) = k \nu(S) = kk' \mu(S).
\end{align}
Setting $S = G$ in \eqref{eq:findk} we find $kk' = 1$.  Hence
\begin{align}\label{eq:same}
    \mu(S^{-1}) = \mu(S).
\end{align}
In particular for $U(n)$, we see that
\begin{align}\label{eq:Un}
    \mu(S^*) = \mu(S)
\end{align}
and for $O(n)$
\begin{align}\label{eq:On}
    \mu(S^T) = \mu(S).
\end{align}

We proceed to show that \eqref{eq:paramit} holds if $U$ is distributed according to Haar measure on $O(n)$ or $U(n)$.  In order to construct Haar measure on $O(n)$ (or $U(n)$ resp.)  let $X$ be an $n \times n$ matrix of iid real (or complex, resp.) standard normal random variables.  In such a setting we say that $X$ belongs to the real (or complex, resp.) Ginibre ensemble.  Then the QR decomposition of $X$ gives
\begin{align*}
    X = QR, \quad Q^TQ = I ~~(\text{or } Q^*Q = I, \text{resp.}), \quad R \text{ is upper triangular}, \quad R_{ii} > 0.
\end{align*}
The $QR$ decomposition is unique if $X$ is non-singular.
Now the Ginibre ensemble is clearly invariant under multiplication on the left by a matrix $G \in O(n)$ (or $G \in U(n)$, resp.),
\begin{align*}
    G X = G Q R.
\end{align*}
So, the pair $(GQ, R)$ gives the QR decomposition of $GX \overset{\text{dist.}}{=} X$.  Thus $Q \overset{\text{dist.}}{=} GQ$.  Hence $X \mapsto Q$ induces Haar measure on $Q$.  But
\begin{align*}
    \begin{bmatrix} X_{11} \\ \vdots \\ X_{n1} \end{bmatrix} = \begin{bmatrix} Q_{11} \\ \vdots \\ Q_{n1} \end{bmatrix} R_{11}
\end{align*}
implying
\begin{align}\label{eq:desired1}
    \begin{bmatrix} Q_{11} \\ \vdots \\ Q_{n1} \end{bmatrix} = \frac{1}{\sqrt{|X_{11}|^2 + \cdots + |X_{n1}|^2}}\begin{bmatrix} X_{11} \\ \vdots \\ X_{n1} \end{bmatrix}.
\end{align}
By \eqref{eq:On} it follows that 
\begin{align}\label{eq:desired2}
\begin{bmatrix} Q_{11} & \cdots & Q_{1n} \end{bmatrix} \overset{\text{dist.}}{=} \frac{\begin{bmatrix} X_{11} & \cdots & X_{n1} \end{bmatrix}}{\sqrt{|X_{11}|^2 + \cdots + |X_{n1}|^2}}.
\end{align}

\begin{remark}
From \eqref{eq:desired2} we see that the components of the ``first'' eigenvector are proportional to independent $\chi_\beta$ variables.   But there is no ``first'' eigenvector: this should be the distribution for the components of any one eigenvector!  This follows by just reordering the eigenvalues. 
\end{remark}

To establish \eqref{eq:paramit}, we now show that the eigenvectors of $W = XX^*$ are Haar distributed on $U(n)$ when the entries of $X$ are iid standard complex normal random variables.   Here $X$ is $n \times m$, $m \geq n$.  Similarly, the eigenvectors of $W = XX^T$, $X$ is $n \times m$, $ m \geq n$, are Haar distributed on $O(n)$, where the entries of $X$ are iid standard (real) normal random variables.  

We follow the argument in \cite{Forrester2010}.

\paragraph{\underline{Step 1}} We consider the complex case, $\beta = 2$.  The case $\beta = 1$ is similar.  Apply the QR decomposition to $X^*$ to obtain $X^* = U_1 T$ where $U_1$ is $m \times n$ and $T$ is $n \times n$, $U_1^*U_1 = I_n$, $T$ is upper triangular, $T_{ii} > 0$.  

We use the following notation: if 
\begin{align*}
    \D Y = \begin{bmatrix} \D Y_{11} & \cdots & \D Y_{1n} \\
    \vdots & & \vdots\\
    \D Y_{n1} &  \cdots & \D Y_{nn} \end{bmatrix}
\end{align*}
is the matrix of differentials of $Y$, then $(\D Y)$ denotes the wedge product of independent entries of $Y$; e.g. if $Y$ is real symmetric then
\begin{align*}
    (\D Y) = \prod_{1 \leq i \leq j \leq n} \D Y_{ij}.
\end{align*}
Then one finds (see \cite[Proposition 3.2.5]{Forrester2010} that
\begin{align}\label{eq:dX}
    (\D X) = \prod_{j=1}^n T_{jj}^{2(n-j) + 1} (\D T) (U_1^* \D U_1).
\end{align}

\paragraph{\underline{Step 2}} From $W = T^*T$, we find (see \cite[Proposition 3.2.6]{Forrester2010})
\begin{align}\label{eq:dW}
    (\D W) = 2^m \prod_{j=1}^n T_{jj}^{2(m-j) + 1} (\D T).
\end{align}

\paragraph{\underline{Step 3}} Substituting \eqref{eq:dW} into \eqref{eq:dX} we find
\begin{align*}
    (\D X) & = 2^{-m} \prod_{j=1}^n T_{jj}^{2(n-j) +1 - 2(m-j) -1}(\D W) (U_1^* \D U_1)\\
    & = 2^{-m} (\det W)^{m-n}(\D W) (U_1^* \D U_1),
\end{align*}
and so
\begin{align*}
    \E^{- \tr X X^*} = \frac{\E^{- \tr W}}{2^m} (\det W)^{m-n}(\D W) (U_1^* \D U_1).
\end{align*}
Integrating out the independent variables $U_1$ we finally arrive at the pdf for $W$
\begin{align}\label{eq:pdfW}
    \frac{\E^{- \tr W}}{C_{n,m}} (\det W)^{m-n}(\D W),
\end{align}
for some normalization constant $C_{n,m} > 0$. 

\paragraph{\underline{Step 4}} Recomputation in the $\beta = 1$ case we find the general formula for the pdf of $W$ for $\beta = 1,2$, and some constant $C_{n,m,\beta}$
\begin{align}\label{eq:genform}
    \frac{1}{C_{n,m,\beta}} \E^{- \beta/2 \tr W} (\det W)^{\frac{\beta}{2}\left(m -n +1 - \frac{\beta}{2} \right)} (\D W).
\end{align}

\paragraph{\underline{Step 5}} . Now we use the standard computation for $(\D W)$ when it is Hermitian ($\beta =1, 2$): For the spectral decomposition ($\beta = 2$) $W = Q \Lambda Q^*$, we find (see \cite[(1.11)]{Forrester2010}) 
\begin{align}\label{eq:dW2}
    (\D W) = \prod_{1 \leq j < k \leq n} |\lambda_k - \lambda_j|^2 \prod_{j=1}^n \D \lambda_j (Q^* \D Q)
\end{align}
and for $\beta = 1$, $W = Q \Lambda Q^T$
\begin{align}\label{eq:dW1}
    (\D W) = \prod_{1 \leq j < k \leq n} |\lambda_k - \lambda_j| \prod_{j=1}^n \D \lambda_j (Q^T \D Q).
\end{align}
Inserting \eqref{eq:dW1}, \eqref{eq:dW2} into \eqref{eq:genform} we find the pdf of $W$
\begin{align}
    \frac{1}{C_{n,m\beta}} \E^{- \beta/2 \sum_j \lambda_j} \prod_{j=1}^n \lambda_j^{\frac{\beta}{2}\left(m -n +1 - \frac{\beta}{2} \right)} |V(\lambda)|^\beta  \prod_{j=1}^n \D \lambda_j (Q^T \D Q),
\end{align}
where $Q^* = Q^T$ for $\beta = 1$ and $V(\lambda)$ is the Vandermonde for $\lambda_1, \ldots,\lambda_n$.

Finally we see that the singular values of $X$ and the singular vectors for $X$ are independent.  As $Q^* \D Q$ is left (and hence, right) invariant, we see that $Q^* \D Q$ is Haar measure and hence the eigenvectors of $W = X^*X$ are Haar distributed.  Therefore \eqref{eq:paramit} follows.


\bibliographystyle{amsalpha}
\bibliography{references}

\end{document}